\newif \ifJournal \Journalfalse
\pgfplotsset{every axis/.append style={
                    label style={font=\Large},
                    tick label style={font=\Large},
                    legend style={font=\Large}
                    }}
\patchcmd{\ttlh@hang}{\parindent\z@}{\parindent\z@\leavevmode}{}{}
\patchcmd{\ttlh@hang}{\noindent}{}{}{}
\newcolumntype{Y}{>{\centering\arraybackslash}X}
\pgfplotsset{compat=1.15}
\newcommand{\logLogSlopeTriangle}[5]
{
    \pgfplotsextra
    {
        \pgfkeysgetvalue{/pgfplots/xmin}{\xmin}
        \pgfkeysgetvalue{/pgfplots/xmax}{\xmax}
        \pgfkeysgetvalue{/pgfplots/ymin}{\ymin}
        \pgfkeysgetvalue{/pgfplots/ymax}{\ymax}

        \pgfmathsetmacro{\xArel}{#1}
        \pgfmathsetmacro{\yArel}{#3}
        \pgfmathsetmacro{\xBrel}{#1-#2}
        \pgfmathsetmacro{\yBrel}{\yArel}
        \pgfmathsetmacro{\xCrel}{\xArel}

        \pgfmathsetmacro{\lnxB}{\xmin*(1-(#1-#2))+\xmax*(#1-#2)} 
        \pgfmathsetmacro{\lnxA}{\xmin*(1-#1)+\xmax*#1} 
        \pgfmathsetmacro{\lnyA}{\ymin*(1-#3)+\ymax*#3} 
        \pgfmathsetmacro{\lnyC}{\lnyA+#4*(\lnxA-\lnxB)}
        \pgfmathsetmacro{\yCrel}{\lnyC-\ymin)/(\ymax-\ymin)}

        \coordinate (A) at (rel axis cs:\xArel,\yArel);
        \coordinate (B) at (rel axis cs:\xBrel,\yBrel);
        \coordinate (C) at (rel axis cs:\xCrel,\yCrel);

        \draw[#5]   (A)-- node[pos=0.5,anchor=north] {\scriptsize{1}}
                    (B)-- 
                    (C)-- node[pos=0.,anchor=west] {\scriptsize{#4}} 
                    (A);
    }
}
\newcommand{\email}[1]{\href{mailto:#1}{#1}}
\newtheorem{prop}{Proposition}
\newtheorem{rem}{Remark}
\newtheorem{lemma}{Lemma}
\newtheorem{definition}{Definition}
\title{Structure preservation in high-order hybrid discretisations of \rev{potential-driven advection-diffusion}: linear and nonlinear approaches}
\author[1]{Simon Lemaire\footnote{\email{simon.lemaire@inria.fr} (corresponding author)}}
\author[2,1]{Julien Moatti\footnote{\email{julien.moatti@tuwien.ac.at}}}
\affil[1]{Inria, Univ.~Lille, CNRS, UMR 8524 -- Laboratoire Paul Painlev\'e, F-59000 Lille, France}
\affil[2]{Institute of Analysis and Scientific Computing, Vienna University of Technology, Wiedner Hauptstr.~8--10, A-1040 Wien, Austria}
\DeclareMathOperator{\e}{e}
\DeclareMathOperator{\divergence}{div}
\newcommand{\R}{\mathbb{R}}
\newcommand{\M}{\mathcal{M}}
\newcommand{\E}{\mathcal{E}}
\newcommand{\D}{\mathcal{D}}
\newcommand{\Diss}{\mathbb{D}}
\newcommand{\Entro}{\mathbb{E}}
\newcommand{\gradd}{\mathbb{G}}
\newcommand{\poly}{\mathbb{P}}
\newcommand{\G}{\underline{\mathcal{G}}_\D}
\newcommand{\s}{\sigma}
\renewcommand{\u}{\underline{\mathfrak{u}}}
\newcommand{\ue}{\underline{\mathfrak{u}}}
\newcommand{\V}{\underline{V}}
\renewcommand{\v}{\underline{v}}
\newcommand{\w}{\underline{w}}
\newcommand{\z}{\underline{z}}
\newcommand{\ro}{\underline{\rho}}
\newcommand{\phid}{\underline{\phi}}
\renewcommand{\l}{\underline{\ell}}
\newcommand{\1}{\mathds{1}}
\newcommand{\one}{\underline{1}}
\newcommand{\zero}{\underline{0}}
\newcommand{\dd}{\mathrm{d}}
\definecolor{MyGreen}{RGB}{54,165,54}
\newcommand{\rev}[1]{{\color{black} #1}}
\begin{document}

\maketitle

\ifJournal

\else

\begin{abstract}
  We are interested in the high-order approximation of anisotropic\rev{, potential-driven advection-diffusion models} on general polytopal partitions.
  We study two hybrid schemes, both built upon the Hybrid High-Order technology.
  The first one hinges on exponential fitting and is linear, whereas the second is nonlinear.
  The existence of solutions is established for both schemes. Both schemes are also shown to possess a discrete entropy structure, ensuring that the long-time behaviour of discrete solutions mimics the PDE one.
  For the nonlinear scheme, the positivity of discrete solutions is a built-in feature.
  On the contrary, we display numerical evidence indicating that the linear scheme violates positivity, whatever the order.
  Finally, we verify numerically that the nonlinear scheme has optimal order of convergence, expected long-time behaviour, and that raising the polynomial degree results, also in the nonlinear case, in an efficiency gain.
  \medskip\\
  {\small {\bf Keywords:} High-order methods; Hybrid methods; Polytopal meshes; Structure-preserving schemes; \rev{Potential-driven advection-diffusion}; Long-time behaviour; Entropy methods.}
  \smallskip\\
  {\small {\bf MSC 2020:} 65M60, 35K51, 35Q84, 35B40.}
\end{abstract}


\fi

\section{Motivations and context}

We are interested in the polytopal discretisation of a linear \rev{potential-driven} advection-diffusion equation using high-order schemes.
Our goal is to compare an exponentially fitted linear method with a nonlinear approach.
Let $\Omega$ be an open, bounded, connected polytopal subset of $\R^d$, $d \in \{2,3\}$, with Lipschitz boundary.
We consider the following anisotropic advection-diffusion problem with homogeneous Neumann boundary conditions:  
find the density $u : \R_+ \times \Omega \to \R$ solution to
\begin{equation} \label{C4:pb:evol}
	\left\{
	\begin{split}
		\partial _ t u - \divergence ( \Lambda  (\nabla u + u \nabla \phi )  ) &= 0 &&\text{ in } \R_+^\star \times \Omega, \\
		\Lambda (\nabla u + u \nabla \phi ) \cdot n &= {0} &&\text{ on } \R_+^\star \times \partial \Omega,\\		
	 	u(0,\cdot)&= u^{0} &&\text{ in } \Omega ,
	\end{split}
	\right.
\end{equation}
where $n$ is the unit normal vector to $\partial\Omega$ pointing outward from $\Omega$. We assume that the data satisfy:
\begin{itemize}
	\item[(i)]
	 $\Lambda \in L^{\infty}(\Omega)^{d\times d}$ is a uniformly elliptic anisotropy tensor: 
		there exists $\lambda_\flat > 0$ such that,
			for a.e.~$x$ in $\Omega$, $ \Lambda (x) \xi \cdot \xi \geq \lambda_\flat|\xi|^2$ for all $\xi \in \R^d$; 
	\item[(ii)]
	$\phi \in \rev{W^{1,\infty}(\Omega)}$ is a regular potential;
	\item[(iii)]
 	$u^{0} \in L^1(\Omega)$ is a non-negative initial datum, such that $\int_\Omega u^{0} \log\left(u^{0}\right) < \infty$.
\end{itemize}
The solution to \eqref{C4:pb:evol} enjoys some specific and well-known properties. 
First, the mass 
is conserved along time, i.e.~for almost every $t>0$,
\begin{equation} \label{C4:eq:pres:mass}
	\int_{\Omega}u(t)= \int_\Omega u^{0} = M,
\end{equation}
where $M >0$ is the initial mass. 
Second, the solution is positive: 
\begin{equation} \label{C4:eq:pres:posi}
	\text{ for a.e.~} t > 0, \qquad  u(t, \cdot) > 0 \, \text{ a.e.~in } \Omega. 
\end{equation} 
Last, the solution converges exponentially fast when $t \to \infty$ towards the thermal equilibrium, unique steady solution to \eqref{C4:pb:evol}, given by 
\begin{equation} \label{C4:eq:def:equi}
	u^\infty= \frac{M }{\int_\Omega \e^{-\phi}}\e^{-\phi}.
\end{equation}
In order to compute a reliable numerical approximation of Problem~\eqref{C4:pb:evol}, one should preserve at the discrete level the three above-listed structural properties.
\rev{In practice, our final target application are drift-diffusion semiconductor models~\cite{VanRo:50,GaGro:89} (and, in particular, anisotropic ones~\cite{GaGar:96}). In these models, the electric potential $\phi$ driving the drift is one of the unknowns of the problem, alongside with the densities of charge carriers. It is solution to a Poisson equation. At the PDE level, the thermal equilibrium is defined as the density $u^\infty>0$ for which the flux $\Lambda(\nabla u^\infty + u^\infty \nabla \phi)$ identically vanishes in $\Omega$. This characterisation implies that the equilibrium quasi-Fermi potential $\log(u^\infty) + \phi$ shall be constant in $\Omega$~\cite{MarUn:93}. At the discrete level, this motivates the following definition.
\begin{definition}[Preservation of the thermal equilibrium] \label{def:th.eq}
  A numerical scheme for~\eqref{C4:pb:evol} preserves the thermal equilibrium if the corresponding discrete equilibrium quasi-Fermi potential is constant.
\end{definition}
\noindent
Note that this definition implies that the discrete equilibrium density has to be positive.
In semiconductor models discretisations, the potential $\phi$ is sought as an element of the discrete space.
By Definition~\ref{def:th.eq}, preserving the thermal equilibrium then essentially requires to also seek $\log(u^\infty)$ as an element of the latter discrete space.
For the schemes we study in this work, the precise meaning of Definition~\ref{def:th.eq} will be made clear in Proposition~\ref{prop:th.eq} below.}

In the realm of Two-Point Flux Approximation (TPFA) finite volume schemes, the so-called Scharfetter--Gummel fluxes~\cite{SchGu:69} are precisely devised so as to preserve the thermal equilibrium. They naturally lead to linear structure-preserving discretisations of the problem (see~\cite{CHDro:11,CHHer:20}).
However, TPFA methods can only be used on meshes satisfying orthogonality conditions (with respect to the inner product induced by $\Lambda$, in case $\Lambda$ is symmetric), which essentially restricts their use to isotropic problems.
On the other hand, a number of finite volume methods using auxiliary unknowns has been introduced within the past twenty years or so for the discretisation of anisotropic problems on general meshes.
One can cite the Discrete Duality Finite Volume (DDFV) method~\cite{Herme:00,DomOm:05}, with additional unknowns attached to a dual mesh, the Vertex Approximate Gradient (VAG) scheme~\cite{EGHer:12}, with auxiliary unknowns attached to the mesh vertices, or the Mimetic Finite Difference (MFD) and Hybrid Finite Volume (HFV) methods~\cite{BLiSi:05,EGaHe:10}, with auxiliary unknowns attached to the mesh faces.
Such methods have proved to be relevant solutions to the anisotropy issue, but none of these linear schemes preserves the positivity of the solutions (see~\cite{Droni:14}).
A possible alternative was proposed in~\cite{CaGui:17}, with the introduction and analysis of a nonlinear positivity-preserving VAG scheme.
The design and analysis of this scheme, as well as of its DDFV and HFV counterparts of~\cite{CCHKr:18,CCHHK:20} and~\cite{CHHLM:22}, leverage the entropy structure of Problem~\eqref{C4:pb:evol}: there exists some physically motivated quantity, called entropy, which decays along time. 
Reproducing this structure at the discrete level is key to get stability, convergence, and accurate time asymptotics.
Other approaches to positivity preservation on general meshes have been explored in the literature. Still in the realm of finite volume methods, one can cite the works~\cite{SYYua:09,DroLP:11,BlaLa:16,SAEFl:17,Quenj:22}.
As opposed to~\cite{CaGui:17}, in which the nonlinearities are introduced at the PDE level then discretised, the latter contributions introduce nonlinearities directly at the discrete level. These nonlinearities, unfortunately, often do not lend themselves to a PDE re-interpretation, making difficult to unravel the potential discrete entropy structures hidden behind.
Arbitrary-order positivity-preserving (or, more generally, discrete maximum principle preserving) methods have also been studied in the literature. In the finite element context, one can cite the seminal works~\cite{Ciarl:70,CiaRa:73} by Ciarlet, as well as the more recent contributions~\cite{FaKaK:12,BGPVe:24} (see also~\cite{BJKno:24} for a comprehensive survey). These approaches are, however, restricted to standard meshes. In addition, only algebraic positivity can usually be enforced, that is positivity of the degrees of freedom, but not of the (piecewise polynomial) functions themselves over the domain. Weak positivity enforcement has also been explored in the Discontinuous Galerkin (DG) framework in~\cite{LiuYu:14,LiuWa:16}: therein, positivity is enforced on the cell averages of the piecewise polynomial solutions. Turning to pointwise positivity enforcement, let us mention in the DG context the interesting contribution~\cite{BBJPe:20}. Therein, a nonlinear scheme is introduced for the (reaction-diffusion) Fisher--KPP equation $\partial_t u - \triangle u = u(1-u)$, in which the (positive) densities are defined as $u = \e^\lambda$, with $\lambda$ piecewise polynomial. This scheme is developed so as to preserve the entropy structure of the PDE model. Compared to the high-order DG schemes of~\cite{LiuYu:14,LiuWa:16}, the main improvement lies in the fact that the discrete solutions are positive everywhere. Such a feature allows for a complete analysis of the scheme, including existence, long-time behaviour, and convergence towards a semi-discretised solution. The analysis is based on the properties of a well-chosen stabilisation term, whose expression implies $L^\infty$-norms of the polynomial unknowns over the mesh faces. The results of~\cite{BBJPe:20}, valid on simplicial meshes, have recently been extended (excluding the long-time behaviour) to polytopal meshes in~\cite{CoBoA:23}, still in the DG context. Along the same lines, yet restricted to standard meshes, let us also cite the 
conforming space-time Galerkin discretisation of~\cite{BPSto:22} for cross-diffusion systems.

From the above literature review, it is quite clear that the landscape in terms of positivity-preserving polytopal methods of arbitrarily high approximation order for advection-diffusion problems is relatively scarce. Speaking of pointwise positivity enforcement, the only existing contribution we are aware of is~\cite{CoBoA:23}, in the DG context, and for reaction-diffusion equations. In the present work, our aim is to study an arbitrary-order {\bf hybrid} polytopal scheme for Problem~\eqref{C4:pb:evol}, preserving the three structural properties~\eqref{C4:eq:pres:mass}--\eqref{C4:eq:pres:posi}--\eqref{C4:eq:def:equi} listed above.
One expected advantage of hybrid methods over DG schemes is a reduction of the number of globally coupled unknowns in the linear systems to be solved at each iteration of the Newton algorithm, which should be all the more substantial that the order of approximation increases.
Our (nonlinear) scheme has been briefly introduced, and a first numerical assessment performed, in~\cite{Moatt:23HO}.
Our goal in the present article is twofold. 
First, we want to provide our approach with theoretical foundations. Second, we aim to conduct an extensive numerical validation of our method (convergence orders, efficiency, positivity, large time), including a comparison in terms of structure preservation with a similar (in the spirit) high-order linear scheme. One could indeed expect, at least in practice, that the use of a method (even linear) with sufficiently high order (and thus accuracy), could already constitute in itself a solution to positivity violation issues.
The two (linear and nonlinear) methods we consider are built upon the Hybrid High-Order (HHO) technology~\cite{DPErn:15,DPELe:14}, as natural extensions of the HFV schemes introduced in~\cite{CHHLM:22,Moatt:23SC}.
The linear scheme hinges on the exponential fitting strategy~\cite{BMaPi:89}.
The key idea is the linear change of unknown $u=\e^{-\phi}\rho$, which allows one to reformulate~\eqref{C4:pb:evol} as an unconditionally coercive problem in the variable $\rho$. As a by-product of this reformulation, the scheme naturally preserves the thermal equilibrium.
The nonlinear scheme relies on the nonlinear change of unknown $u=\e^\ell$ (cf.~\cite{CaGui:17}), which is designed so as to preserve the Boltzmann entropy structure of the PDE model and, as a by-product, the positivity of solutions, the thermal equilibrium, and the long-time asymptotics.
For the sake of simplicity, both schemes rely on a mixed-order HHO space: given an integer $k\geq 0$, the methods hinge on face unknowns of polynomial degree $k$, and enriched cell unknowns of polynomial degree $k+1$.
The main interest of such a discrepancy in the degree between face and cell unknowns is a simplification \rev{in the design of the higher-order bulk reconstruction and of} the stabilisation, resulting in turn in a simplification of the analysis.
In the meantime, such a choice preserves optimal accuracy (order $k+2$ in $L^2$-norm) and frugality (the face unknowns, of degree $k$, are the only globally coupled unknowns).
\rev{Since we are manipulating mixed-order spaces, following~\cite{CoDPE:16}, we could also refer to our methods as HDG methods. However, we prefer naming them HHO methods for the two following reasons (cf.~\cite[Section 1.5.2]{CErPi:21}). First, HDG schemes are developed adopting a mixed-hybrid viewpoint, whereas we adopt here the primal HHO viewpoint. Second, the analysis of HDG methods usually hinges on specific (often simplex-based) projections, whereas our HHO analysis makes here a systematic use of $L^2$-orthogonal projectors (well-defined on polytopal cells). In any case, the two schemes we introduce in this work are new in the HDG/HHO context.}
Our first theoretical results, stated in Propositions~\ref{C4:prop:ExpF:exi} and~\ref{C4:prop:ExpF:time}, concern the well-posedness and long-time behaviour of the (linear) exponential fitting scheme.
Regarding the nonlinear scheme, we prove the existence of (positive) solutions in Theorem~\ref{C4:th:nl:exist}.
These discrete solutions are further proved to converge (in large time) in Proposition~\ref{C4:prop:nl:LT} towards the discrete equilibrium of the scheme.
Note that we could also have compared our nonlinear scheme with the linear HHO method for advection-diffusion of~\cite{DPDEr:15}, which generalises to arbitrary approximation orders the HMM scheme of~\cite{BdVDM:11} \rev{(both introduced for general advection fields)}. We have not pursued further in this direction, this for two reasons. First, the stability of this scheme hinges on some coercivity assumptions which constrain the variety of potentials that can be considered, and second it does not preserve the thermal equilibrium (see~\cite{CHHLM:22} in the lowest-order case).

The rest of the article is organised as follows. 
In Section~\ref{C4:sec:schemes}, we first introduce the discrete framework, and describe the two schemes under consideration. 
Then, in Section~\ref{C4:sec:prop}, we discuss the main properties of the two schemes, and we provide some elements of analysis regarding the well-posedness and discrete long-time behaviours.
Last, in Section~\ref{C4:sec:num}, we discuss the implementation of the nonlinear scheme, and we assess the behaviour of the methods on various test-cases. 

\section{Discrete setting and schemes} \label{C4:sec:schemes}

The two numerical schemes we consider in this article are based on a backward Euler discretisation in time, with uniform time step $\Delta t>0$.
The time discretisation is thus defined as $(t^n)_{n \in \mathbb{N}}$, where $t^n = n \Delta t$.  
Note that it is straightforward to generalise the discussion below to a variable time step.
We focus in this section on space discretisation.

\subsection{Polytopal meshes and anisotropy tensor}

In the vein of~\cite[Definition 1.4]{DPDro:20}, we define a discretisation of $\Omega$ as a couple $\D = ( \M, \E)$, where:
\begin{itemize}
		\item the mesh $\M$ is a partition of $\Omega$, i.e.~$\M$ is a finite collection of disjoint, open, Lipschitz polytopes $K \subset \Omega$ with $|K|_d>0$ (the cells) such that $\overline{\Omega} = \bigcup _ {K \in \M } \overline{K}$; 
		\item the set $\E$ is a partition of the mesh skeleton $\partial\M=\bigcup_{K\in\M}\partial K$, i.e.~$\E$ is a finite collection of disjoint, connected, relatively open subsets $\s$ of $\overline{\Omega}$ with $|\s|_{d{-}1}>0$ (the faces) such that $\partial\M = \bigcup_{\s \in \E} \overline{\s}$. It is assumed that, for all $\s \in \E$, $\s$ is a Lipschitz polytopal subset of an affine hyperplane of $\R^d$.
We assume that, for all $K\in\M$, there exists a subset $\E_K$ of $\E$ (the set of faces of the cell $K$) such that $\partial K=\bigcup_{\s\in\E_K}\overline{\s}$. Finally, we let $n_{K,\s} \in \R^d$ be the (constant) unit normal vector to $\s \in \E_K$ pointing outward from $K$.	
\end{itemize}
The diameter of a subset $X \subset \overline{\Omega}$ is denoted by $h_X = \sup \lbrace|x-y| \mid (x,y) \in X^2  \rbrace$, and we define the size of $\D$ (the mesh size) as $h_\D = \max_{K\in\M} h_K$. For further use, we also introduce the smallest cell diameter $h_\flat=\min_{K\in\M}h_K$ of $\D$.

When studying asymptotic behaviours with respect to the mesh size, one has to adopt a measure of regularity for refined families of discretisations.
We classically follow~\cite[Definition 1.9]{DPDro:20}, in which regularity for a refined mesh family is quantified by a uniform (with respect to the mesh size) parameter $\theta\in(0,1)$, called mesh regularity parameter.
This parameter measures the chunkiness of the cells, but also the diameter ratio between the cells and their faces.
In what follows, to avoid the proliferation of multiplicative constants, we write $a\lesssim b$ in place of $Ca\leq b$ if $C>0$ only depends on $\Omega$, on the mesh regularity parameter $\theta$, and (if need be) on $\Lambda$, $\phi$, and the underlying polynomial degree, but is independent of both $h_{\D}$ (and $h_\flat$) and $\Delta t$.

\rev{
\begin{rem}[Relaxation of the mesh regularity assumptions]
  Upon replacing the scalings $h_\s$ for $\s\in\E_K$ by $h_K$ in the stabilisations/discrete norms below, the analysis performed in this work remains valid under the (much) less stringent mesh regularity assumptions of~\cite[Assumption 1]{DroYe:22} (cf.~also~\cite[Definition 1.41]{DPDro:20}). Contrary to~\cite[Definition 1.9]{DPDro:20}, these relaxed mesh regularity assumptions allow for small faces and cells with numerous faces, as they may appear in agglomeration-based meshing.
\end{rem}
}

Last, we make an additional regularity assumption on the anisotropy tensor. We assume that
\begin{equation} \label{eq:ass.dif.ten}
  \Lambda_{\mid K}\in W^{1,\infty}(K)^{d\times d}\qquad\forall K\in\M.
\end{equation}

\subsection{Discrete space and operators}

For $q\in\mathbb{N}$, and $X$ subset of $\overline{\Omega}$ of Hausdorff dimension $1\leq l\leq d$, we let $\poly^q(X)$ denote the vector space of $l$-variate polynomial functions $X \to \R$ of total degree at most $q$.
We also define the $L^2(X)$-orthogonal projector $\Pi_X^q : L^1(X) \to \poly^q(X)$ such that, given any $v \in L^1(X)$, $\Pi_X^q(v)$ is the only element in $\poly^q(X)$ satisfying 
\[
	\int_X \Pi_X^q(v) z = \int_X v z\qquad\forall  z \in \poly^q(X).
\]
Given any $K \in \M$, we also introduce the vector space $\poly^q(K)^d$ of $d$-variate polynomial vector fields $K\to\R^d$ of total degree at most $q$, as well as the corresponding $L^2(K)^d$-orthogonal projector (denoted as its scalar version) $\Pi_K^q:L^1(K)^d\to\poly^q(K)^d$. For any $\s \in \E_K$ and $v \in W^{1,1}(K)$, we also introduce the shortcut notation 
\[
	\Pi_\s^q(v) = \Pi_\s^q  \left (v_{\mid \s} \right ).
\]

Let $k$ be a given non-negative integer. We introduce the mixed-order HHO space (see~\cite{CErPi:21,DPDro:20}), with face unknowns of degree $k$ and (enriched) cell unknowns of degree $k+1$: 
\begin{equation*}
	\V_\D^{k}  = \left \lbrace \v_{\D} =\big( (v_K )_{K \in \M } ,
			(v_\s)_{\s \in \E} \big) 
			\left |
			\begin{array}{ll}
				\forall K\in\M, & v_K \in \poly^{k+1}(K)  \\
				\forall\s\in\E, & v_\s \in \poly^k(\s)   
			\end{array}				 	
			\right. 			
			\!\!\!\!\right \rbrace.
\end{equation*}
Given a cell $K \in \M$, we let
\[
\V_K^{k} = \poly^{k+1}(K) \times \bigg(\bigtimes_{\s \in \E_K} \poly^k(\s)\bigg)
\]
be the restriction of $\V_\D^{k}$ to $K$, and for a generic discrete element $\v_{\D} \in \V_{\D}^{k}$, we denote by $\v_K=\big(v_K,(v_{\s})_{\s\in\E_K}\big)\in\V_K^{k}$  its local restriction to the cell $K$.
To any $\v_{\D} \in \V_{\D}^{k}$, we associate the two piecewise polynomial functions $v_\M : \Omega \to \R$ and $v_\E : \partial\M  \to \R$ such that
\[
	{v_\M}_{|K} = v_K \text{ for all } K \in \M\quad \text{ and }\quad {v_\E}_{|\s} = v_\s \text{ for all } \s \in \E.
\] 
We also let $\one_\D \in \V_\D ^{k}$ be the discrete element such that $1_K = 1$ for all $K \in \M$ and $1_\s = 1$ for all $\s \in \E$.
Last, given a cell $K \in \M$, we define the local interpolator $\underline{I}^k_K : W^{1,1}(K) \to \V_K^{k}$ such that, for any $v \in W^{1,1}(K)$, 
\[
	\underline{I}^k_K (v) = \left ( \Pi_K^{k+1}(v) , \big( \Pi_\s^{k}(v)\big)_{\s \in \E_K} \right).
\]
Similarly, the global interpolator $\underline{I}^k_\D : W^{1,1}(\Omega) \to \V_\D^{k}$ is defined, for any $v \in W^{1,1}(\Omega)$, by
\[
	\underline{I}^k_\D (v) = \left ( 
		\big( \Pi_K^{k+1}(v_{\mid K}) \big)_{K \in \M},
		\big( \Pi_\s^{k}(v_{\mid\s})\big)_{\s \in \E} 
		\right).
                \]
                
As standard in the HHO context, locally to any cell $K\in\M$, we introduce a discrete gradient operator $G_K : \V_K^{k} \to \poly^k(K)^d$ such that, for any $\v_K \in \V_K^{k}$, $G_K(\v_K)\in\poly^k(K)^d$ satisfies
\begin{equation} \label{C4:def:grad}
	\int_K G_K(\v_K)  \cdot \tau = 
	-\int_K v_K \,\nabla \cdot \tau 
	+ \sum_{\s \in \E_K} \int_\s v_\s \,\tau \cdot n_{K,\s} \qquad \forall \tau \in  \poly^k(K)^d.
\end{equation}
This operator is a consistent discrete counterpart of the gradient operator.
It satisfies the following commutation property:
\[
	\forall v \in W^{1,1}(K), \qquad 
	G_K \circ \underline{I}_K^k (v) = \Pi_K^k(\nabla v).
\]
Given a face $\s \in \E_K$, we also define the jump operator $J_{K,\s} : \V_K^{k} \to \poly^k(\s)$ such that, for $\v_K\in\V_K^k$,
\begin{equation} \label{C4:def:jumps}
	J_{K,\s} (\v_K) = \Pi_\s^k(v_K) - v_\s .
\end{equation}
Based on the above ingredients, one can define an HHO counterpart of the local diffusion bilinear form $(z,v) \mapsto \int_K \Lambda \nabla z \cdot \nabla v$. We let $a_K : \V_K^{k} \times \V_K^{k} \to \R$ be the bilinear form such that
\begin{equation} \label{C4:sch:aK}
	a_K : (\z_K, \v_K) \mapsto
	\int_K \Lambda G_K(\z_K) \cdot  G_K(\v_K)
		+ \sum_{\s \in \E_K}  \frac{\Lambda_{K,\s}}{h_\s} 
			\int_\s J_{K,\s}(\z_K) J_{K,\s}(\v_K), 
\end{equation}
where $\Lambda_{K,\s} = \|\Lambda_{\mid K} n_{K,\s} \cdot n_{K,\s}  \|_{L^\infty(\s)}$ (recall the regularity assumption~\eqref{eq:ass.dif.ten}). 
In the context of HDG methods, the linear stabilisation used in~\eqref{C4:sch:aK} is often called Lehrenfeld--Sch\"oberl stabilisation, as it was first introduced in~\cite{Lehre:10,LehSc:16}.
Classically, one can then define a global bilinear form $a_\D : \V_\D^{k} \times \V_\D^{k} \to \R$, discretisation of $(z,v) \mapsto \int_\Omega \Lambda \nabla z \cdot \nabla v$, by summing the local contributions: 
\begin{equation}\label{C4:sch:aD}
	a_\D : (\z_\D, \v_\D) \mapsto
		\sum_{K \in \M} a_K (\z_K, \v_K).
\end{equation}

For analysis purposes, we introduce a discrete $H^1$-like semi-norm on $\V^k_\D$. Given a cell $K \in \M$, we first let, for any $\v_K\in\V^k_K$,
\[
	|\v_K|_{1,K}^2 = \Vert \nabla v_K \Vert_{L^2(K)^d}^2 + \sum_{\s \in \E_K} \frac{1}{h_\s} \Vert v_K-v_\s \Vert _{L^2(\s)}^2.
\] 
Then, at the global level, for any $\v_\D\in\V^k_\D$, we define
\begin{equation} \label{C4:def:norm}
	|\v_\D|^2_{1,\D} =  \sum_{K \in \M}|\v_K|_{1,K}^2.
\end{equation}
Notice that $|{\cdot}|_{1,\D}$ is not a norm on $\V_\D^{k}$, but any $\v_\D \in \V_\D^{k}$ satisfying $|\v_\D|_{1,\D} = 0$ is proportional to $\one_\D$.
In particular, this implies that $|{\cdot}|_{1,\D}$ is a norm on the zero-mass subspace of $\V_\D^{k}$ defined by 
\[
\V_{\D,0}^{k} = \left \lbrace \v_\D \in \V_\D^{k} \mid \int_\Omega v_\M = 0 \right \rbrace .
\] 
Standard HHO analysis implies the following stability estimate: 
\begin{equation} \label{C4:eq:coercivity}
	\forall \v_\D \in \V_{\D}^{k}, \qquad
		|\v_\D|_{1,\D}^2\lesssim a_\D(\v_\D,\v_\D),
\end{equation}
where the multiplicative constant is proportional to $\lambda_\flat$.
In particular, since $|{\cdot}|_{1,\D}$ is a norm on $\V_{\D,0}^{k}$, this estimate implies that $a_\D$ is coercive on $\V_{\D,0}^{k}$.
Finally, we recall the following discrete Poincar\'e--Wirtinger inequality (cf.~\cite[Theorem 6.5, $p=q=2$]{DPDro:20} in the equal-order case):
\begin{equation} \label{C4:eq:poinca}
	\forall \v_\D \in \V_{\D,0}^{k}, \qquad \|v_\M\|_{L^2(\Omega)} \lesssim |\v_\D|_{1,\D}.
\end{equation}
  
\subsection{Exponential fitting scheme}

The construction extends the ideas from~\cite{CHHLM:22}. Our (linear) scheme hinges on the exponential fitting strategy~\cite{BMaPi:89}.
In a nutshell, the exponential fitting approach is based on the following rewriting of the PDE flux: letting $\omega = \e^{-\phi}$, and introducing the Slotboom variable $\rho = u/\omega$, one has
\begin{equation}\label{C4:eq:sltotflux}
	\Lambda(\nabla u + u \nabla \phi) = \omega \Lambda\nabla \rho, 
\end{equation}
which allows to transform an advection-diffusion equation in $u$ into a purely diffusive, unconditionally coercive (by regularity of $\phi$, $\omega$ is a.e.~uniformly bounded away from zero) problem in $\rho$.
At the discrete level, the problem is solved in the Slotboom variable, which is sought in $\V_\D^{k}$. The discrete density is then \rev{defined} mimicking the relation $u = \omega \rho$. 

In view of~\eqref{C4:eq:sltotflux}, in order to define our exponential fitting (mixed-order) HHO scheme, we need to introduce a discrete counterpart of the bilinear form $(\rho, v ) \mapsto \int_\Omega \omega \Lambda \nabla \rho \cdot \nabla v $. 
To do so, given $K \in \M$, and leveraging the definition~\eqref{C4:sch:aK} of $a_K$, we let $a_K^\omega : \V_K^{k} \times \V_K^{k} \to \R$ be such that
\begin{equation} \label{eq:aK.om}
	a_K^\omega : (\ro_K, \v_K) \mapsto 
		\int_K \omega \Lambda G_K(\ro_K) \cdot  G_K(\v_K)
		+ \sum_{\s \in \E_K}  \frac{\Lambda_{ K,\s}^\omega}{h_\s} 
			\int_\s J_{K,\s}(\ro_K) J_{K,\s}(\v_K), 
\end{equation}
where $\Lambda_{ K,\s}^\omega = \|\omega \Lambda_{\mid K} n_{K,\s} \cdot n_{K,\s}  \|_{L^\infty(\s)}$.
At the global level, as previously, we construct the bilinear form $a_\D^\omega : \V_\D^{k} \times \V_\D^{k} \to \R$ by summing the local contributions: 
\begin{equation} \label{eq:aD.om}
	a_\D^\omega : (\ro_\D, \v_\D) \mapsto \sum_{K \in \M } a_K^\omega ( \ro_K, \v_K  ). 
\end{equation}
We can now introduce the exponential fitting HHO scheme for Problem~\eqref{C4:pb:evol}: 
find $(\ro_\D^n)_{n \geq 1} \in \left ( \V_\D^{k} \right ) ^{\mathbb{N}^\star}$ such that, for all $n \in \mathbb{N}$, 
\begin{subequations}\label{C4:sch:ExpF}
        \begin{empheq}[left = \empheqlbrace]{align}
        	\rev{\sum_{K\in\M} \int_K\frac{\mathfrak{u}^{\omega,n+1}_K - \mathfrak{u}^{\omega,n}_K}{\Delta t}  v_K} + a^\omega_\D(\ro_\D^{n+1}, \v_\D)
        		&=0 
        		&&\forall \v_\D \in \V_\D^{k}, \label{C4:sch:ExpF:test}\\        		
        	\mathfrak{u}^{\omega,n+1}_{K} &= \omega_{\mid K} \rho_K^{n+1}    	&&\forall K\in\M, \label{C4:sch:ExpF:u} \\
        		\mathfrak{u}^{\omega,0}_{K} &= u_{|K}^{0}  	&&\forall K\in\M. \label{C4:sch:ExpF:ini} 
        \end{empheq}
\end{subequations}
\rev{For any solution $(\ro_\D^n)_{n \geq 1}$ to~\eqref{C4:sch:ExpF}, we define a sequence of corresponding densities $(\ue_\D^{\omega,n})_{n \geq 1}$ as follows.
To the discrete Slotboom variable $\ro_\D \in \V_\D^{k}$ we associate the discrete density 
\[
	\ue_\D^\omega = \big( (\mathfrak{u}^\omega_K)_{K \in \M} , (\mathfrak{u}^\omega_\s)_{\s \in \E} \big),
\]
defined, consistently with~\eqref{C4:sch:ExpF:u}, as the collection of (a priori non-polynomial) functions
\begin{equation}\label{eq:rec.den}
  \mathfrak{u}^\omega_K = \omega_{\mid K} \rho_K\text{ for all }K \in \M
	\quad \text{ and } \quad 
	\mathfrak{u}^\omega_\s = \omega_{\mid\s} \rho_\s\text{ for all }\s \in \E.
\end{equation}
The non-polynomial nature of the components of $\ue_\D^\omega$ is, here and in what follows, emphasised by the use of Gothic fonts.
Finally, to any discrete density $\u^\omega_\D$, we associate the two piecewise smooth functions $\mathfrak{u}^\omega_\M : \Omega \to \R$ and $\mathfrak{u}^\omega_\E : \partial \M  \to \R$ such that
\begin{equation*}
  {\mathfrak{u}^\omega_\M}_{|K} = \mathfrak{u}^\omega_K \text{ for all } K \in \M\quad \text{ and }\quad {\mathfrak{u}^\omega_\E}_{|\s} = \mathfrak{u}^\omega_\s \text{ for all } \s \in \E.
\end{equation*}
Remark that $\mathfrak{u}^\omega_\M  = \omega\rho_\M$ in $\Omega$ and $\mathfrak{u}^\omega_\E = \omega_{\mid\partial\M}\rho_\E$ on $\partial\M$.
}
\begin{rem}[Non-polynomial \rev{definition}]\label{C4:rem:ExpF:nonpoly}
Here, we choose to \rev{define} a discrete density with (a priori) non-polynomial components.
One could also think of \rev{defining} a density with polynomial components, by multiplying (component by component) $\ro_\D$ by $\underline{I}^k_\D(\omega)$. This is how the solution to the low-order HFV exponential fitting scheme of~\cite{CHHLM:22} was defined (in that case, both cell/face unknowns were constants).
\end{rem}
\begin{rem}[Initial condition] \label{C4:rem:ExpF:ini}
Notice that we do not define $\mathfrak{u}^{\omega,0}_\M$ in the same way as $\mathfrak{u}^{\omega,n+1}_{\M }$.
Indeed, we directly use in the definition~\eqref{C4:sch:ExpF:ini} the initial datum $u^{0}$.
\rev{This choice is motivated by the following observation. Define $\rho^0=u^0/\omega$, and let $\mathfrak{u}^{\omega,0}_K=\omega_{\mid K}\Pi^{k+1}_K(\rho^0_{\mid K})$ for all $K\in\M$ in place of~\eqref{C4:sch:ExpF:ini}. Then, the resulting discrete solution's mass is $M^\omega = \sum_{K \in \M} \int_K \mathfrak{u}^{\omega,0}_K \neq M$ in general.
} 
\end{rem}

Recall that $\int_{\Omega}u^0=M>0$. Let $\ro_\D^\infty \in \V_\D^{k}$ be defined as
\[
	\ro_\D^\infty = c^M_{l} \one_\D, 
	\qquad\text{ with }
	c^M_l = \dfrac{M}{\int_\Omega \e^{-\phi}}>0. 
\]
One can easily check that $ \ro_\D^\infty$ is the only steady solution to the exponential fitting scheme~\eqref{C4:sch:ExpF}.
Based on~\eqref{eq:rec.den}, $ \ro_\D^\infty$ is associated to the discrete equilibrium density $\ue_\D^{\omega,\infty}$ such that
\begin{equation} \label{eq:di.th.eq.l}
    \mathfrak{u}^{\omega,\infty}_{\M}=c^M_l\e^{-\phi}\text{ in }\Omega\quad\text{ and }\quad\mathfrak{u}^{\omega,\infty}_{\E}=c^M_l\e^{-\phi_{\mid\partial\M}}\text{ on }\partial\M.
\end{equation}
\rev{It follows that the reconstructed discrete equilibrium density $\mathfrak{u}^{\omega,\infty}_\M$ (always) coincides with the thermal equilibrium~\eqref{C4:eq:def:equi} in $\Omega$. Such a striking property is, however, to be tempered by Remark~\ref{rem:alt} below.}
Following Remark~\ref{C4:rem:ExpF:nonpoly}, notice that if we had adopted instead a polynomial \rev{definition} for the discrete densities, we would have obtained that $\ue_\D^{\omega,\infty}=\underline{I}^k_\D(u^\infty)\in\V^k_\D$, as was the case for the low-order exponential fitting HFV method of~\cite{CHHLM:22} (in that case, both cell/face unknowns were constants). A drawback of such a \rev{definition}, compared to~\eqref{eq:rec.den}, is that the components of $\underline{I}^k_\D(u^\infty)$ are not necessarily positive functions (note that this issue does not exist in the low- and equal-order HFV case).

\begin{rem}[Alternative scheme definition] \label{rem:alt}
  \rev{Let $\phid_\D=\underline{I}_{\D}^k(\phi)\in\V_\D^k$.
  Another definition of the exponential fitting HHO scheme consists in replacing $\omega$ by $\e^{-\phi_\M}$ in the expressions of both $a_K^{\omega}$ (see~\eqref{eq:aK.om}) and $\mathfrak{u}_K^{\omega,n+1}$ (see~\eqref{C4:sch:ExpF:u}).
  Then, in place of~\eqref{eq:rec.den}, the following definition of discrete densities is adopted:
  \begin{equation}\label{eq:rec.den.alt}
  \mathfrak{u}^\omega_K = \e^{-\phi_K} \rho_K\text{ for all }K \in \M
	\quad \text{ and } \quad 
	\mathfrak{u}^\omega_\s = \e^{-\phi_\s} \rho_\s\text{ for all }\s \in \E.
  \end{equation}
  Such a scheme is somewhat closer to what one would encounter in the context of semiconductor models, since $\phi$ would be unknown, and sought, at the discrete level, in $\V_\D^k$.} In this case, the discrete equilibrium density $\ue_\D^{\omega,\infty}$ would satisfy, in place of~\eqref{eq:di.th.eq.l}, the same kind of relations as~\eqref{eq:di.th.eq.nl}--\eqref{eq:di.th.eq.nl.2} below. Here, we \rev{rather} choose to exploit the full knowledge we have of the potential $\phi$ to define the scheme.
\end{rem}

\subsection{Nonlinear scheme}

The construction extends the ideas from~\cite{CHHLM:22,Moatt:23SC}. Our nonlinear scheme relies on a nonlinear reformulation of Problem \eqref{C4:pb:evol}~\cite{CaGui:17}. To do so, we introduce the logarithm potential $\ell = \log(u)$ and the quasi-Fermi potential $w = \ell + \phi$. At least formally, if $u$ is positive, one has the following relation on the PDE flux:
\begin{equation} \label{C4:eq:nlflux}
  \Lambda\big(\nabla u + u \nabla \phi\big) = u \Lambda\nabla \left ( \log(u) + \phi \right )= \e^\ell  \Lambda\nabla w.
\end{equation}
We choose to discretise the potentials as piecewise polynomials, i.e.~we approximate $\ell$ and $w$ as discrete unknowns in $\V_\D^{k}$. Then, mimicking the relation $u = \e^\ell$, each discrete density component is \rev{defined} as the exponential of a polynomial, thus ensuring its positivity. 
 
In view of~\eqref{C4:eq:nlflux}, in order to define our nonlinear HHO scheme, we shall introduce a discrete counterpart of the map $(\ell;w,v)\mapsto \int_\Omega \e^\ell \Lambda \nabla w \cdot \nabla v$. Locally to any cell $K\in\M$, this discrete counterpart is built as the sum of a consistent~\eqref{C4:sch:cons} and a stabilising~\eqref{C4:sch:stab_nl} contributions: for all $\l_K,\w_K,\v_K\in\V_K^k$, we let
\begin{subequations}\label{C4:sch:loc}
        \begin{empheq}{align}
        	\mathcal{C}_K(\l_K;\w_K, \v_K) 
        		&= \int_K \e^{\ell_K} \Lambda \,G_K(\w_K) \cdot G_K(\v_K), \label{C4:sch:cons} \\
 		\mathcal{S}_K(\l_K; \w_K, \v_K) 
 			&=  \sum_{\s \in \E_K } \frac{\Lambda_{K,\s}}{h_\s} 
				\int_\s  \frac{ \e^{\Pi_\s^k(\ell_K)} + \e^{ \ell_\s}  }{2}
	 			J_{K,\s}(\w_K) J_{K,\s}(\v_K).   \label{C4:sch:stab_nl} 
        \end{empheq}
\end{subequations}
We then introduce the local map $\mathcal{T}_K :  \V_K^{k} \times \V_K^{k} \times \V_K^{k} \to \R$  such that 
\begin{equation}
	\mathcal{T}_K :(\l_K;\w_K, \v_K)\mapsto 
		\mathcal{C}_K(\l_K;\w_K, \v_K) + \mathcal{S}_K(\l_K; \w_K, \v_K)
		+  \varepsilon\, h_K^{k+2} a_K(\w_K, \v_K),
\label{C4:sch:stationnary}
\end{equation}
where $\varepsilon$ is a non-negative parameter and $a_K$ is the bilinear form defined by~\eqref{C4:sch:aK}.
At the global level, we finally define the map $\mathcal{T}_\D :  \V_\D^{k} \times \V_\D^{k} \times \V_\D^{k} \to \R$  by summing the local contributions: 
\begin{equation}
	\mathcal{T}_\D: (\l_\D;\w_\D, \v_\D) \mapsto 
		 \sum_{K \in \M} \mathcal{T}_K (\l_K;\w_K, \v_K).
\label{C4:sch:stationnary:global}
\end{equation}
\begin{rem}[Parameter $\varepsilon$]
The map $\mathcal{T}_\D$ is to be understood as a discretisation of $(\ell;w,v) \mapsto \int_\Omega (\e^\ell + \epsilon)\, \Lambda\nabla  w \cdot \nabla v $, with $\epsilon$ of magnitude $\varepsilon h_\D^{k+2}$.
At the theoretical level, this $\epsilon$-perturbation of the model is necessary, at the moment, to show the existence result of Theorem~\ref{C4:th:nl:exist}. From a more practical viewpoint, the sensitivity of the method with respect to $\epsilon$ is not completely understood yet. First numerical experiments tend to show that the choices $\varepsilon = 1$ and $\varepsilon = 0$ produce essentially similar results. Concerning the choice of scaling factor $h_K^{k+2}$ in~\eqref{C4:sch:stationnary}, it seems to yield in practice (when $\varepsilon>0$) the expected orders of convergence.
The influence of the $\epsilon$-term will be further investigated in future works.
\end{rem}
\noindent
Let $\phid_\D=\underline{I}_{\D}^k(\phi)\in\V_\D^k$. We can now introduce our nonlinear HHO scheme for Problem~\eqref{C4:pb:evol}: find $\left (\l_\D^n \right)_{n \geq 1} \in \left (\V_\D^{k} \right ) ^\mathbb{N^\star}$ such that, for all $n \in \mathbb{N}$,
\begin{subequations}\label{C4:sch:AD}
        \begin{empheq}[left = \empheqlbrace]{align}
        	\rev{\sum_{K\in\M}\int_K \frac{\mathfrak{u}^{n+1}_K - \mathfrak{u}^n_K}{\Delta t} v_K} +  \mathcal{T}_\D(\l_\D^{n+1}; \l_\D^{n+1} +  \phid_\D, \v_\D)
        		&=0 
        		&&\forall \v_\D \in \V_\D^{k}, \label{C4:sch:test}\\        		
        		\mathfrak{u}^{n+1}_K &=	\e^{\ell_K^{n+1}}    	&&\forall K\in\M, \label{C4:sch:u} \\
        		\mathfrak{u}^0_K &= u^{0}_{|K}    	&&\forall K\in\M. \label{C4:sch:ini} 
        \end{empheq}
\end{subequations}
\rev{For any solution $\left (\l_\D^n \right)_{n \geq 1}$ to~\eqref{C4:sch:AD}, we define a sequence of corresponding positive densities $\left (\u_\D^n \right ) _{n \geq 1}$ as follows.
To the discrete logarithm potential $\l_\D \in \V_\D^{k}$ we associate the discrete density 
\[
 \u_\D = \big( (\mathfrak{u}_K)_{K \in \M}, (\mathfrak{u}_\s)_{\s \in \E}  \big), 
\]
defined, consistently with~\eqref{C4:sch:u}, as the collection of positive (non-polynomial) functions 
\begin{equation} \label{eq:rec.pos.den}
	\mathfrak{u}_K = \e^{\ell_K}\text{ for all }K \in \M 
	\quad \text{ and } \quad 
	\mathfrak{u}_\s = \e^{\ell_\s}\text{ for all }\s \in \E.
\end{equation}
The non-polynomial nature of the components of $\u_\D$ is, here also, emphasised by the use of Gothic fonts.
Finally, to any discrete density $\u_\D$, we associate the two (positive) piecewise smooth functions $\mathfrak{u}_\M : \Omega \to \R^\star$ and $\mathfrak{u}_\E : \partial\M  \to \R^\star$ such that
\begin{equation*} 
  {\mathfrak{u}_\M}_{|K} = \mathfrak{u}_K \text{ for all } K \in \M\quad \text{ and }\quad {\mathfrak{u}_\E}_{|\s} = \mathfrak{u}_\s \text{ for all } \s \in \E.
\end{equation*}
Remark that $\mathfrak{u}_\M =  \e^{\ell_\M}$ in $\Omega$ and $\mathfrak{u}_\E =  \e^{\ell_\E}$ on $\partial\M$.}

\begin{rem}[Initial condition] \label{C4:rem:nl:ini}
  Remark that we do not define $\mathfrak{u}_\M^0$ in the same way as $\mathfrak{u}_\M^{n+1}$. We indeed directly use in the definition~\eqref{C4:sch:ini} the initial datum $u^0$.
This strategy allows one to circumvent the definition of some $\ell_\M^0$, cell interpolate of $\ell^{0} = \log(u^{0})$, the latter quantity being undefined in regions where $u^{0}$ vanishes.
The question of defining an initial discrete logarithm potential remains however a major difficulty when it comes to numerical implementation, since it is needed for the initialisation of the Newton method when $n=0$ (see Section~\ref{C4:sec:num:tricks}).
\end{rem}

Recall that $\int_{\Omega}u^0=M>0$. Let $\l_\D^\infty\in\V^k_\D$ be defined as
\[
\l_\D^\infty=\log(c^M_{nl})\one_\D-\phid_\D,
\qquad\text{ with }
c^M_{nl}=\frac{M}{\int_\Omega \e^{-\phi_\M}}>0.
\]
It can be easily checked that $\l_\D^\infty$ is the only steady solution to the nonlinear scheme~\eqref{C4:sch:AD}. Based on~\eqref{eq:rec.pos.den}, $\l_\D^\infty$ is associated to the discrete equilibrium density 
 $\u_\D^\infty$ such that
\begin{equation} \label{eq:di.th.eq.nl}
	\mathfrak{u}^\infty_\M = c^M_{nl} \e^{-\phi_\M}\text{ in } \Omega
	\quad \text{ and } \quad 
	\mathfrak{u}^\infty_\E = c^M_{nl} \e^{-\phi_\E}\text{ on }\partial\M. 
\end{equation}
In a sense we make clear just below, the discrete equilibrium density $\u_\D^\infty$ is a reasonable approximation of the thermal equilibrium $u^\infty$ defined by~\eqref{C4:eq:def:equi}:
\begin{subequations} \label{eq:di.th.eq.nl.2}
  \begin{empheq}{align}
  \log(\mathfrak{u}^\infty_{K}) - \Pi_K^{k+1}\big( \log(u^\infty)\big) 
  &= \log \left(\int_\Omega \e^{-\phi} \right )  - \log \left(\int_\Omega \e^{-\phi_\M} \right )&\;&\forall K\in\M,\\
  \log(\mathfrak{u}^\infty_{\s}) - \Pi_\s^{k}\big( \log(u^\infty)\big) 
  &= \log \left(\int_\Omega \e^{-\phi} \right )  - \log \left(\int_\Omega \e^{-\phi_\M} \right )&\;&\forall \s\in\E.
  \end{empheq}
\end{subequations}
\rev{It follows that the reconstructed discrete equilibrium density $\mathfrak{u}^{\infty}_\M$ satisfies: if $\phi_{\mid K}\in\poly^{k+1}(K)$ for all $K\in\M$, then $\mathfrak{u}_\M^\infty=u^\infty$ in $\Omega$.}
Remark that the discrete equilibrium density $\u_\D^\infty$ is not equal to $\underline{I}_{\D}^k(u^\infty)\in\V_\D^k$.
This is in contrast with what held true for the low-order nonlinear HFV method of~\cite{CHHLM:22} (in that case, both cell/face unknowns were constants). This can be explained by the choice of discretisation for $\phi$, which was taken as $-\log(\underline{\omega}_\D)$ (with $\underline{\omega}_\D$ HFV interpolate of $\omega$) in~\cite{CHHLM:22} in place of $\phid_\D$ here (the latter choice is inspired by~\cite{Moatt:23SC} in the context of semiconductors). Remark that, in practice, $-\log(\underline{\omega}_\D)$ and $\phid_\D$ may coincide if the integrals are approximated using an evaluation at the barycenter. This was the case in the numerical experiments of~\cite{CHHLM:22}.

\section{Main features of the schemes} \label{C4:sec:prop}

We present in this section some theoretical results about the two schemes introduced above.
We focus, in particular, on the existence (and stability) of solutions, as well as on questions related to their long-time behaviour.
The results presented below generalise those obtained in~\cite{CHHLM:22} in the low-order HFV context. 
In particular, the analysis strongly hinges on the entropy structure of both schemes.

\begin{rem}[Lowest-order versions of the schemes ($k=0$)] \label{C4:rem:k=0}
  Note that the lowest-order versions of the two schemes introduced above do not coincide with the exponential fitting and nonlinear HFV schemes of~\cite{CHHLM:22}.
  \rev{Indeed,} the lowest-order versions of the methods~\eqref{C4:sch:ExpF} and~\eqref{C4:sch:AD} make use of (enriched) affine cell unknowns, whereas HFV schemes use constants.
  \rev{Also, whereas the nonlinear HFV method is built upon a stable discrete gradient operator (defined on a pyramidal submesh), the present nonlinear scheme is defined following the standard HHO philosophy of splitting consistency and (nonlinear) stabilisation.}
  Therefore, the results presented here are new, even for $k=0$. 
\end{rem}
Before presenting individual results for each scheme, let us stress that both schemes exhibit a similar important property: the preservation of the thermal equilibrium.
\rev{
\begin{prop}[Preservation of the thermal equilibrium] \label{prop:th.eq}
  The alternative (fully discrete) exponential fitting scheme of Remark~\ref{rem:alt} and the nonlinear scheme~\eqref{C4:sch:AD} preserve the thermal equilibrium in the sense of Definition~\ref{def:th.eq}. More precisely,
  \begin{itemize}
    \item[$\bullet$] $\log(\u^{\omega,\infty}_\D)+\phid_\D$ is proportional to $\one_\D$;
    \item[$\bullet$] $\l^\infty_\D + \phid_\D$ is proportional to $\one_\D$.
  \end{itemize}
\end{prop}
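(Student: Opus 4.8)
The plan is to treat the two schemes in turn; in each case I would identify the unique discrete steady state explicitly, form the associated discrete quasi-Fermi potential, and check that it is a multiple of $\one_\D$.

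\textbf{Nonlinear scheme.} The second bullet is immediate from the definition of $\l_\D^\infty$: since $\l_\D^\infty = \log(c^M_{nl})\one_\D - \phid_\D$, one gets $\l^\infty_\D + \phid_\D = \log(c^M_{nl})\one_\D$, which is proportional to $\one_\D$. To make this a clean statement I would also recall why $\l_\D^\infty$ is \emph{the} discrete equilibrium (the ``easily checked'' claim above~\eqref{eq:di.th.eq.nl}). That it is stationary follows because $\l_\D^\infty + \phid_\D$, being proportional to $\one_\D$, is annihilated by every discrete gradient $G_K$ and every jump $J_{K,\s}$ (recall $G_K(\one_K)=0$ and $J_{K,\s}(\one_K) = \Pi_\s^k(1)-1 = 0$), so all three contributions to $\mathcal{T}_\D(\l_\D^\infty;\l_\D^\infty+\phid_\D,\v_\D)$ vanish for every $\v_\D$, while $\e^{\ell_\M^\infty} = c^M_{nl}\e^{-\phi_\M}$ has mass $M$. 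For uniqueness, any stationary solution $\l_\D$ has mass $M$ (take $\v_\D = \one_\D$ in~\eqref{C4:sch:test}), and testing with $\v_\D = \l_\D + \phid_\D$ turns $\mathcal{T}_\D(\l_\D;\l_\D+\phid_\D,\l_\D+\phid_\D) = 0$ into a sum of non-negative terms — this is where the pointwise positivity of the weights $\e^{\ell_K}$ and $(\e^{\Pi_\s^k(\ell_K)}+\e^{\ell_\s})/2$, the ellipticity of $\Lambda$, and the non-negativity of $a_K$ enter — hence $G_K(\l_K+\phid_K) = 0$ and $J_{K,\s}(\l_K+\phid_K) = 0$ everywhere; by~\eqref{C4:eq:coercivity} this forces $|\l_\D+\phid_\D|_{1,\D} = 0$, so $\l_\D + \phid_\D = \alpha\one_\D$, and the mass constraint $\e^\alpha\int_\Omega\e^{-\phi_\M} = M$ pins down $\alpha = \log(c^M_{nl})$.

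\textbf{Alternative exponential fitting scheme.} Replacing $\omega$ by $\e^{-\phi_\M}$ in~\eqref{eq:aD.om} produces a symmetric bilinear form whose consistency and stabilisation weights are bounded below by a positive constant times those of $a_\D$, hence it is coercive in the sense of~\eqref{C4:eq:coercivity}; testing the stationary equation with the solution itself then shows that its only stationary Slotboom solutions are the constants $c\,\one_\D$. The associated discrete density~\eqref{eq:rec.den.alt} reads $\mathfrak{u}^{\omega,\infty}_K = c\,\e^{-\phi_K}$ and $\mathfrak{u}^{\omega,\infty}_\s = c\,\e^{-\phi_\s}$, and testing the scheme with $\one_\D$ (the weighted terms again drop out) gives mass conservation, so $c\int_\Omega\e^{-\phi_\M} = M$, i.e.\ $c = c^M_{nl}$. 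Taking componentwise logarithms — legitimate precisely because $\phi_K = \Pi_K^{k+1}(\phi_{\mid K})\in\poly^{k+1}(K)$ and $\phi_\s = \Pi_\s^k(\phi_{\mid\s})\in\poly^k(\s)$, so that $\log(\u^{\omega,\infty}_\D) = \log(c^M_{nl})\one_\D - \phid_\D$ genuinely belongs to $\V_\D^k$ — yields $\log(\u^{\omega,\infty}_\D) + \phid_\D = \log(c^M_{nl})\one_\D$, which is the first bullet.

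\textbf{Main point deserving care.} I do not expect a genuine obstacle. The one delicate point is that the discrete quasi-Fermi potential must be a bona fide element of $\V_\D^k$, which is exactly why Remark~\ref{rem:alt}'s scheme uses $\e^{-\phi_\M}$ (with $\phid_\D = \underline{I}_\D^k(\phi)$ piecewise polynomial) instead of $\omega = \e^{-\phi}$: for the original scheme~\eqref{C4:sch:ExpF} one would instead get $\log(\u^{\omega,\infty}_\D) + \phid_\D = \phid_\D - \phi$, which is in general not constant — not even piecewise polynomial — even though there the reconstructed equilibrium density coincides exactly with $u^\infty$. The secondary ingredient, uniqueness of the discrete steady states, is the routine coercivity computation sketched above, the only subtlety being the use of the strict positivity of the exponential weights to keep the relevant quadratic forms non-negative.
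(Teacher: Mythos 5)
Your proposal is correct and follows essentially the same route as the paper, which states the result as an immediate consequence of the explicit formulas $\l_\D^\infty=\log(c^M_{nl})\one_\D-\phid_\D$ and $\mathfrak{u}^{\omega,\infty}_K=c^M_{nl}\e^{-\phi_K}$, $\mathfrak{u}^{\omega,\infty}_\s=c^M_{nl}\e^{-\phi_\s}$ for the discrete equilibria (cf.~\eqref{eq:di.th.eq.nl} and Remark~\ref{rem:alt}), leaving the identification of these as the unique steady states as an ``easily checked'' claim. Your write-up merely fills in those routine verifications (stationarity via $G_K(\one_K)=0$ and $J_{K,\s}(\one_K)=0$, uniqueness via positivity of the weights and the coercivity estimate~\eqref{C4:eq:coercivity}), and your closing observation about why the original scheme~\eqref{C4:sch:ExpF} only satisfies the weaker, non-discrete version of the property matches the paper's remark following the proposition.
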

\noindent
For the original exponential fitting scheme~\eqref{C4:sch:ExpF}, there holds that $\log(\mathfrak{u}^{\omega,\infty}_\M) + \phi$ over $\Omega$ and $\log(\mathfrak{u}^{\omega,\infty}_\E) + \phi_{\mid\partial\M}$ over $\partial\M$ are equal to the same constant.}

\subsection{Exponential fitting scheme}

We present here the main properties of the exponential fitting HHO scheme~\eqref{C4:sch:ExpF}, and give detailed proofs of the results.
As a preliminary remark, note that since $\phi\in \rev{W^{1,\infty}(\Omega)}$, 
\begin{equation} \label{eq:phi:Linf}
  \|\phi\|_{L^\infty(\Omega)}\lesssim 1.
\end{equation}
As a consequence, the tensor $\omega \Lambda$ is a.e.~uniformly elliptic.
Recalling~\eqref{C4:eq:coercivity}, this implies the following stability estimate:
\begin{equation} \label{C4:eq:omega:coercivity}
	\forall \v_\D \in \V_{\D}^{k}, \qquad 	
		|\v_\D|_{1,\D}^2\lesssim a_\D^\omega(\v_\D,\v_\D),
\end{equation}
where the multiplicative constant is proportional to $\lambda_\flat$.
We first state a well-posedness result, which is mainly a consequence of the previous stability estimate.

\begin{restatable}[Well-posedness of the exponential fitting scheme]{prop}{propExpFWP} \label{C4:prop:ExpF:exi}
The exponential fitting scheme~\eqref{C4:sch:ExpF} admits a unique solution $(\ro_\D^n)_{n\geq 1}$. 
Moreover, the corresponding discrete densities $(\ue_\D^{\omega,n})_{n \geq 1}$ have a mass equal to $M$: 
\begin{equation} \label{C4:eq:ExpF:mass}
	\forall n \geq 1, \qquad 
		\int_\Omega \mathfrak{u}_\M^{\omega,n} = \int_\Omega u^{0} = M.
\end{equation}
\end{restatable}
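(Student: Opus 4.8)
The plan is to recast the scheme, at each time step, as a well-posed linear variational problem on $\V_\D^k$, invoke a Lax--Milgram argument, and then derive mass conservation by testing against $\one_\D$.

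First I would eliminate the density unknowns via~\eqref{C4:sch:ExpF:u}, so that, given $\mathfrak{u}^{\omega,n}_\M$ (known from the previous step, or from~\eqref{C4:sch:ExpF:ini} when $n=0$), step $n$ reads: find $\ro^{n+1}_\D\in\V_\D^k$ such that $b_\D(\ro^{n+1}_\D,\v_\D)=\ell^n(\v_\D)$ for all $\v_\D\in\V_\D^k$, where
\[
b_\D(\ro_\D,\v_\D)=\frac1{\Delta t}\sum_{K\in\M}\int_K\omega_{\mid K}\rho_K v_K + a_\D^\omega(\ro_\D,\v_\D),
\qquad
\ell^n(\v_\D)=\frac1{\Delta t}\sum_{K\in\M}\int_K\mathfrak{u}^{\omega,n}_K v_K.
\]
Since $\V_\D^k$ is finite-dimensional, it suffices to show that the linear operator associated with $b_\D$ is injective, i.e.~that $b_\D(\v_\D,\v_\D)>0$ for every $\v_\D\neq\zero_\D$. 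By~\eqref{eq:phi:Linf} the weight $\omega=\e^{-\phi}$ is bounded from below by a positive constant, so the mass term in $b_\D(\v_\D,\v_\D)$ controls $\|v_\M\|_{L^2(\Omega)}^2$; together with the stability estimate~\eqref{C4:eq:omega:coercivity} applied to $a_\D^\omega$, this provides a constant $c>0$ (depending on $\Delta t$, which is harmless here) such that $b_\D(\v_\D,\v_\D)\geq c\big(\|v_\M\|_{L^2(\Omega)}^2+|\v_\D|^2_{1,\D}\big)$.

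The point to be slightly careful about --- and really the only one --- is that this lower bound is needed on the full space $\V_\D^k$, not merely on the zero-mass subspace $\V_{\D,0}^k$, which is precisely why the strictly positive weight $\omega$ (hence the mass term) is indispensable here. Indeed, $\|v_\M\|_{L^2(\Omega)}^2+|\v_\D|^2_{1,\D}$ does define a norm on $\V_\D^k$: if it vanishes, then $|\v_\D|_{1,\D}=0$ forces $\v_\D$ to be proportional to $\one_\D$ (as recalled right after~\eqref{C4:def:norm}), and then $\|v_\M\|_{L^2(\Omega)}=0$ forces the proportionality constant to vanish. Hence $b_\D$ is coercive, the associated operator is invertible, and at each step $\ro^{n+1}_\D$ exists and is unique; arguing by induction on $n$, starting from the datum $\mathfrak{u}^{\omega,0}_\M=u^0$ prescribed by~\eqref{C4:sch:ExpF:ini}, yields the announced existence and uniqueness of $(\ro^n_\D)_{n\geq1}$.

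Finally, for~\eqref{C4:eq:ExpF:mass} I would test~\eqref{C4:sch:ExpF:test} with $\v_\D=\one_\D$. A direct computation from~\eqref{C4:def:grad} gives $G_K(\one_K)=0$, and~\eqref{C4:def:jumps} gives $J_{K,\s}(\one_K)=\Pi_\s^k(1)-1=0$; hence $a_\D^\omega(\ro^{n+1}_\D,\one_\D)=0$ and~\eqref{C4:sch:ExpF:test} collapses to $\sum_{K\in\M}\int_K\big(\mathfrak{u}^{\omega,n+1}_K-\mathfrak{u}^{\omega,n}_K\big)=0$, that is $\int_\Omega\mathfrak{u}^{\omega,n+1}_\M=\int_\Omega\mathfrak{u}^{\omega,n}_\M$ for every $n\geq0$. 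Since~\eqref{C4:sch:ExpF:ini} yields $\int_\Omega\mathfrak{u}^{\omega,0}_\M=\int_\Omega u^0=M$, a plain induction gives $\int_\Omega\mathfrak{u}^{\omega,n}_\M=M$ for all $n\geq1$. Overall, there is no genuine difficulty: once full-space coercivity is secured thanks to the positive weight $\omega$, the result is a routine Lax--Milgram application combined with the annihilation of $a_\D^\omega$ by constants.
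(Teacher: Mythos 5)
Your proposal is correct and follows essentially the same route as the paper: coercivity of the time-stepped bilinear form on the full space $\V_\D^k$ (the weighted mass term controlling the constants, the stability estimate~\eqref{C4:eq:omega:coercivity} controlling the rest), a Lax--Milgram/invertibility argument at each step, and testing with $\one_\D$ for mass conservation. Your explicit verification that $G_K(\one_K)=0$ and $J_{K,\s}(\one_K)=0$ just spells out the step the paper states directly.
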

\begin{proof}
Let $n \geq 0$, and assume that $\mathfrak{u}_\M^{\omega, n}$ is defined. We want to show that equations~\eqref{C4:sch:ExpF:test}--\eqref{C4:sch:ExpF:u} admit a unique solution.
To do so, we first define, for any $\ro_\D \in \V_\D^{k}$,
\[ 
	\| \ro_\D \|^2_{\omega , \Delta t, \D} = \Delta t |\ro_\D|_{1,\D}^2 + \int_\Omega \omega \rho_\M^2.
\]
Since $|{\cdot}|_{1,\D}$ is a semi-norm on $\V_\D^{k}$ with zero set spanned by $\one_\D$, it follows that the map $\|{\cdot}\| _{\omega , \Delta t, \D}$ defines a norm on $\V_\D^{k}$. 
Thus, by~\eqref{C4:eq:omega:coercivity}, the bilinear form $A_\D^\omega: (\ro_\D, \v_\D) \mapsto \int_\Omega\omega \rho_\M v_\M  + \Delta t a_\D^\omega(\ro_\D, \v_\D)$ satisfies the following coercivity property:
\[
	\forall \ro_\D \in \V_\D^{k}, \qquad 
		\| \ro_\D \| _{\omega , \Delta t, \D} ^2\lesssim A_\D^\omega(\ro_\D,\ro_\D ).
\] 
By the Lax--Milgram lemma, equations~\eqref{C4:sch:ExpF:test}--\eqref{C4:sch:ExpF:u} therefore admit a unique solution $\ro_\D^{n+1}$ in $\V_\D^{k}$, from which one can uniquely define $\ue^{\omega,n+1}_\D$ by~\eqref{eq:rec.den}.
To prove mass conservation, we just test~\eqref{C4:sch:ExpF:test} by $\one_\D$ to get
\[
		\int_\Omega \frac{\mathfrak{u}^{\omega,n+1}_\M - \mathfrak{u}^{\omega,n}_\M}{\Delta t} = 0.
\]
We conclude by noticing that $\int_\Omega\mathfrak{u}^{\omega,0}_\M = \int_\Omega u^{0}$ according to~\eqref{C4:sch:ExpF:ini}.
\end{proof}

We now state our main result about the exponential fitting scheme, which ensures that the solution to~\eqref{C4:sch:ExpF} has similar long-time behaviour as the PDE solution.
As usual with the entropy method, the main idea is to get a control of the entropy by its dissipation. Here, such an estimate is a consequence of the discrete Poincaré inequality~\eqref{C4:eq:poinca}.

\begin{restatable}[Long-time behaviour of the exponential fitting scheme]{prop}{propExpFLT}\label{C4:prop:ExpF:time}
Assume that $u^0\in L^2(\Omega)$. Let $(\ro_\D^n)_{n \geq 1}$ be the solution to the exponential fitting scheme~\eqref{C4:sch:ExpF}. Then, the following discrete entropy relation holds true:
\begin{equation} \label{C4:eq:ExpF:entro}
	\forall n \in \mathbb{N}, \qquad \frac{\Entro^{n+1}_\omega -\Entro^{n}_\omega }{\Delta t} +\Diss_\omega^{n+1} \leq 0, 
\end{equation}
where the discrete quadratic entropy is defined as
\[
\Entro^{n}_\omega = \frac{1}{2}\int_\Omega \omega ( \rho_\M^n - \rho_\M^\infty)^2
\]
with $\rho_\M^0$ defined (with a slight abuse in notation, since $\rho_\M^0$ is not piecewise polynomial) by $\rho_\M^0=\frac{u^{0}}{\omega}$,
and the discrete dissipation is given by 
\[
	\Diss_\omega^n = a_\D^\omega (\ro_\D^n -\ro_\D^\infty,\ro_\D^n -\ro_\D^\infty ) \qquad \forall n \geq 1.
        \]
As a consequence, the \rev{reconstructed} discrete density converges exponentially fast in time towards the \rev{reconstructed} discrete equilibrium density:
there exists a positive constant $\nu_\omega$, independent of both $h_\D$ and $\Delta t$, such that
\begin{equation} \label{C4:eq:ExpF:LT}
	\forall n \in \mathbb{N},\qquad 
		\|\mathfrak{u}_\M^{\omega,n} - \mathfrak{u}_\M^{\omega,\infty} \|_{L^2(\Omega)} 
		\lesssim  \left (  1 + \nu_\omega \Delta t \right )^{-\frac{n}{2}} 
			\|u^{0}  -\mathfrak{u}_\M^{\omega,\infty} \|_{L^2(\Omega)}.
\end{equation}	 
\end{restatable}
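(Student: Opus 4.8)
The plan is to follow the classical entropy-dissipation strategy, now in the fully discrete setting. First I would establish the discrete entropy relation~\eqref{C4:eq:ExpF:entro}. To do so, I test~\eqref{C4:sch:ExpF:test} (written at time step $n+1$) with the discrete function $\ro_\D^{n+1} - \ro_\D^\infty \in \V_\D^k$. For the diffusive term this directly produces $a_\D^\omega(\ro_\D^{n+1} - \ro_\D^\infty, \ro_\D^{n+1} - \ro_\D^\infty) = \Diss_\omega^{n+1}$, using that $\ro_\D^\infty = c_l^M\one_\D$ kills the stabilisation jumps and, together with the commutation/consistency of $G_K$ on constants, makes $a_\D^\omega(\ro_\D^\infty, \cdot) = 0$ (so one may freely insert $\ro_\D^\infty$). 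For the time-derivative term, I use~\eqref{C4:sch:ExpF:u}, i.e. $\mathfrak{u}_K^{\omega,n+1} = \omega_{\mid K}\rho_K^{n+1}$, so that $\int_\Omega \frac{\mathfrak{u}_\M^{\omega,n+1} - \mathfrak{u}_\M^{\omega,n}}{\Delta t}(\rho_\M^{n+1} - \rho_\M^\infty) = \int_\Omega \omega\,\frac{(\rho_\M^{n+1} - \rho_\M^\infty) - (\rho_\M^n - \rho_\M^\infty)}{\Delta t}(\rho_\M^{n+1} - \rho_\M^\infty)$. The elementary convexity inequality $(a-b)a \geq \tfrac12(a^2 - b^2)$ applied pointwise (with the nonnegative weight $\omega$) then yields $\int_\Omega \frac{\mathfrak{u}_\M^{\omega,n+1} - \mathfrak{u}_\M^{\omega,n}}{\Delta t}(\rho_\M^{n+1} - \rho_\M^\infty) \geq \frac{\Entro_\omega^{n+1} - \Entro_\omega^n}{\Delta t}$. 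Here the case $n=0$ needs the convention $\rho_\M^0 = u^0/\omega$ together with $\mathfrak{u}_\M^{\omega,0} = u^0$ from~\eqref{C4:sch:ExpF:ini}, and the hypothesis $u^0\in L^2(\Omega)$ guarantees $\Entro_\omega^0 < \infty$. Adding the two contributions gives~\eqref{C4:eq:ExpF:entro}.

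Next I would turn the entropy relation into exponential decay. The key is to control the entropy by its dissipation. Since $\ro_\D^{n+1} - \ro_\D^\infty$ has the same mass as the previous iterate minus $\ro_\D^\infty$ — and, by mass conservation~\eqref{C4:eq:ExpF:mass} read through the weight $\omega$, one checks that $\int_\Omega \omega(\rho_\M^n - \rho_\M^\infty) = 0$ for $n\geq 1$ — the function $\rho_\M^n - \rho_\M^\infty$ is $\omega$-orthogonal to constants. I would then combine: (i) the weighted Poincaré--Wirtinger inequality following from~\eqref{C4:eq:poinca} (after shifting $\ro_\D^n - \ro_\D^\infty$ by its zero-$\omega$-mass representative, or directly noting that the discrete element $\ro_\D^n - \ro_\D^\infty$ can be corrected by a constant at controlled cost, using $\|\phi\|_{L^\infty}\lesssim 1$ from~\eqref{eq:phi:Linf} to compare $\omega$-weighted and unweighted $L^2$-norms), and (ii) the coercivity estimate~\eqref{C4:eq:omega:coercivity}, which bounds $|\ro_\D^n - \ro_\D^\infty|_{1,\D}^2 \lesssim a_\D^\omega(\ro_\D^n - \ro_\D^\infty, \ro_\D^n - \ro_\D^\infty) = \Diss_\omega^n$. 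Chaining these gives a uniform constant $\nu_\omega > 0$, independent of $h_\D$ and $\Delta t$, with $\Entro_\omega^{n+1} \leq \tfrac{1}{\nu_\omega}\Diss_\omega^{n+1}$. Plugging this into~\eqref{C4:eq:ExpF:entro} yields $\Entro_\omega^{n+1} \leq (1 + \nu_\omega\Delta t)^{-1}\Entro_\omega^n$, hence by induction $\Entro_\omega^n \leq (1 + \nu_\omega\Delta t)^{-n}\Entro_\omega^0$.

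Finally I would translate this back to the reconstructed densities. Since $\mathfrak{u}_\M^{\omega,n} - \mathfrak{u}_\M^{\omega,\infty} = \omega(\rho_\M^n - \rho_\M^\infty)$ and $\omega = \e^{-\phi}$ is bounded above and below by positive constants depending only on $\|\phi\|_{L^\infty(\Omega)}$ (itself $\lesssim 1$), we have $\|\mathfrak{u}_\M^{\omega,n} - \mathfrak{u}_\M^{\omega,\infty}\|_{L^2(\Omega)}^2 \lesssim \int_\Omega \omega(\rho_\M^n - \rho_\M^\infty)^2 = 2\Entro_\omega^n$ and similarly $\Entro_\omega^0 \lesssim \|u^0 - \mathfrak{u}_\M^{\omega,\infty}\|_{L^2(\Omega)}^2$ (using $\rho_\M^0 - \rho_\M^\infty = (u^0 - \mathfrak{u}_\M^{\omega,\infty})/\omega$). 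Combining with the geometric decay of $\Entro_\omega^n$ gives~\eqref{C4:eq:ExpF:LT}.

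The main obstacle I anticipate is the weighted Poincaré step, step (i) above: getting a Poincaré--Wirtinger inequality with the $\omega$-weighted $L^2$-norm on the left and the constant $\nu_\omega$ genuinely independent of $h_\D$ and $\Delta t$. The clean route is to use the zero-$\omega$-mass property of $\rho_\M^n - \rho_\M^\infty$, replace it by the closest constant shift to land in $\V_{\D,0}^k$, invoke~\eqref{C4:eq:poinca} and~\eqref{C4:eq:omega:coercivity} there, and absorb all the $\omega$-to-unweighted comparisons into constants controlled solely by $\|\phi\|_{L^\infty(\Omega)}$ via~\eqref{eq:phi:Linf}. The rest is bookkeeping; the $n=0$ initialisation is the only other place requiring a small amount of care, handled by the stated conventions and the assumption $u^0 \in L^2(\Omega)$.
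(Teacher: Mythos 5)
Your proposal is correct and follows essentially the same route as the paper's proof: the same convexity inequality for the time term after testing with $\ro_\D^{n+1}-\ro_\D^\infty$, the same use of the zero $\omega$-mass of $\rho_\M^{n+1}-\rho_\M^\infty$ combined with the discrete Poincar\'e--Wirtinger inequality~\eqref{C4:eq:poinca}, the coercivity~\eqref{C4:eq:omega:coercivity}, and the uniform bounds on $\omega$ from~\eqref{eq:phi:Linf} to obtain $\nu_\omega\Entro_\omega^{n+1}\leq\Diss_\omega^{n+1}$. The only cosmetic difference is that the paper formalises your ``correct by a constant at controlled cost'' step by invoking a weighted-mean comparison lemma from the literature, which is exactly the mechanism you describe.
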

\begin{proof}
Let $n \in \mathbb{N}$. By convexity of $x\mapsto x^2$ on $\R$, one has 
\[
	\Entro^{n+1}_\omega - \Entro^n_\omega  = 
		\frac{1}{2} \int_\Omega \omega \left ( (\rho_\M^{n+1} - \rho_\M^\infty )^2 - (\rho_\M^{n} - \rho_\M^\infty )^2 \right )
		\leq \int_\Omega \omega \left (\rho_\M^{n+1} - \rho_\M^{n}\right ) \left ( \rho_\M^{n+1} - \rho_\M^\infty \right ).  
\]
Therefore, testing~\eqref{C4:sch:ExpF:test} against $\ro_\D^{n+1} - \ro_\D^\infty \in \V_\D^{k}$, we get
\[
	\frac{\Entro^{n+1}_\omega - \Entro^n_\omega}{\Delta t} \leq - a_\D^\omega ( \ro_\D^{n+1} , \ro_\D^{n+1} - \ro_\D^\infty).
\]
Note that this estimate holds true also for $n=0$ (using the definition of $\rho_\M^{0}$).
On the other hand, by the expression of $\ro_\D^\infty$ (proportional to $\one_\D$), $a_\D^\omega ( \ro_\D^{\infty} , \ro_\D^{n+1} - \ro_\D^\infty) = 0$, hence by bilinearity of $a_\D^\omega$, 
\[
\Diss_\omega^{n+1}=a_\D^\omega ( \ro_\D^{n+1} , \ro_\D^{n+1} - \ro_\D^\infty),
\]
which yields the entropy relation~\eqref{C4:eq:ExpF:entro}. To get the exponential decay, one needs to compare $\Diss_\omega^{n+1}$ with $\Entro^{n+1}_\omega$.
To do so, we let $\v_\D =  \ro_\D^{n+1} - \ro_\D^\infty \in \V_\D^{k}$, and we define the probability measure $\dd\mu = \frac{c^M_l}{M}\omega \dd x $ on $\Omega$.
We define $\langle v_\M\rangle_\mu$ as the mass of $v_\M$ for the measure $\dd\mu$, i.e.
\[
	\langle v_\M\rangle_\mu = \int_\Omega v_\M \dd \mu.
\]
The definition~\eqref{eq:rec.den}, and the mass preservation identity~\eqref{C4:eq:ExpF:mass}, imply that 
\[
	\langle v_\M\rangle_\mu = \int_\Omega v_\M \dd \mu = \frac{c^M_{l}}{M} \int_\Omega \omega v_\M
	= \frac{c^M_{l}}{M} \int_\Omega \left ( \mathfrak{u}_\M^{\omega, n+1} - \mathfrak{u}_\M^{\omega, \infty} \right )
	= c^M_{l}\frac{M - M} {M} = 0.
\]
Therefore, letting $\langle v_\M\rangle = \frac{1}{|\Omega|_d} \int_\Omega v_\M$, and applying~\cite[Lemma 5.2, $q=2$]{CCHHK:20}, we get 
\[
	\frac{c^M_{l}}{M}\int_\Omega \omega  v_\M ^2 = \int_\Omega \left (  v_\M - \langle v_\M\rangle_\mu \right ) ^2 \dd \mu
	\leq 4 \int_\Omega \left ( v_\M  - \langle v_\M\rangle \right ) ^2 \dd \mu.
\]  
Using the definition of $\dd\mu$, and the bound~\eqref{eq:phi:Linf}, yields
\[
	\int_\Omega \omega v_\M ^2
		\lesssim \int_\Omega \left ( v_\M  -\langle v_\M\rangle  \right ) ^2
		 = \|  v_\M  -\langle v_\M\rangle  \|^2_{L^2(\Omega)}.
\]
By definition of $\langle v_\M\rangle$, one has $\v_\D - \langle v_\M\rangle\one_\D \in \V_{\D,0}^{k}$, so we can apply the discrete Poincaré--Wirtinger inequality~\eqref{C4:eq:poinca} to infer that 
\[
	\|  v_\M  - \langle v_\M\rangle \|^2_{L^2(\Omega)} \lesssim |\v_\D - \langle v_\M\rangle\one_\D |_{1,\D}^2
		= |\v_\D  |_{1,\D}^2.
\]
Combining the two previous estimates, we get 
\[
	\Entro_\omega ^{n+1}=\frac{1}{2}\int_\Omega\omega v_\M^2 \lesssim |\v_\D  |_{1,\D}^2.
\]
Now, one can use the stability estimate~\eqref{C4:eq:omega:coercivity} to infer that 
\[
	|\v_\D |_{1,\D}^2\lesssim a^\omega_\D ( \v_\D, \v_\D )=\Diss^{n+1}_\omega.
\]      
Therefore, combining the last two estimates, one infers the existence of $\nu_\omega>0$, independent of both $h_\D$ and $\Delta t$, such that the following relation between the entropy and its dissipation holds true:
\[
	\nu_{\omega}\Entro_\omega ^{n+1} \leq \Diss^{n+1}_\omega.
\]        
Plugging this estimate into the entropy relation~\eqref{C4:eq:ExpF:entro}, we deduce that
\[
	(1 + \nu_\omega \, \Delta t ) \Entro_\omega ^{n+1} \leq \Entro_\omega ^{n}. 
\]
This implies the exponential decay of the entropy: 
\[
	\forall n\geq 0, \qquad \Entro_\omega^n \leq \left ( 1 + \nu_\omega \, \Delta t\right )^{-n} \Entro_\omega^0.
\]
To conclude, we just use the definition~\eqref{eq:rec.den} and the bound~\eqref{eq:phi:Linf} to infer that
\[
	\|\mathfrak{u}^{\omega,n}_\M - \mathfrak{u}^{\omega,\infty}_\M  \|_{L^2(\Omega)}^2
	 \lesssim \Entro_\omega^n \lesssim 
	 	\|\mathfrak{u}^{\omega,n}_\M - \mathfrak{u}^{\omega,\infty}_\M  \|_{L^2(\Omega)}^2,
\]
which, combined with the fact that $\mathfrak{u}^{\omega,0}_\M=u^0$, finally yields~\eqref{C4:eq:ExpF:LT}.
\end{proof}

\begin{rem}[Regularity of the initial datum and topology of the convergence]
  Notice that in Proposition~\ref{C4:prop:ExpF:time} we have made the extra assumption that $u^{0} \in L^2(\Omega)$.
  The long-time analysis of the exponentially fitted model indeed relies on the decay of the quadratic entropy (in the Slotboom variable)
  \[ 	\Entro_\omega(t) = \frac{1}{2} \int_\Omega \omega (\rho(t) - \rho^\infty)^2 . \]
  In order to guarantee that the initial quadratic entropy is finite, assuming that the initial datum is in $L^2(\Omega)$ is a safe choice.
  At the end, as a reminiscence of the linearity of the model, the exponential fitting approach gives a convergence (in time) result in the $L^2$-topology (in space). 
In contrast, the nonlinear approach will yield convergence in a weaker norm (typically $L^1$), but can be used to deal with less regular initial data, which are in $L \log(L)$ only.
\end{rem}

\subsection{Nonlinear scheme}

We present here some results regarding the analysis of the nonlinear HHO method~\eqref{C4:sch:AD}. 
Since we deal with a nonlinear scheme, unlike the exponential fitting scheme, the question of the existence of solutions is the main difficulty here. As often for this type of method, we start by establishing some a priori estimates.
For the purpose of analysis, given a discrete logarithm potential $\l_\D \in \V_\D^{k}$, we associate a discrete quasi-Fermi potential $\w_\D\in\V^k_\D$ defined by 
\begin{equation} \label{eq:qua.fer}
	\w_\D = \l_\D + \phid_\D - \log(c^M_{nl}) \one_\D,
\end{equation}
where we recall that $c^M_{nl}=M/\int_\Omega\e^{-\phi_\M}$.
By \rev{definition~\eqref{eq:rec.pos.den}} and equation~\eqref{eq:di.th.eq.nl}, one has
\[
  w_\M = \log \left ( \frac{\mathfrak{u}_\M}{\mathfrak{u}_\M^\infty} \right)\text{ in }\Omega \quad\text{ and }\quad w_\E = \log \left ( \frac{\mathfrak{u}_\E}{\mathfrak{u}_\E^\infty} \right)\text{ on }\partial\M.
\]
Note that, on the other hand, for any $\l_\D,\v_\D\in \V_\D^{k}$, we have 
\begin{equation} \label{eq:TD}
  \mathcal{T}_\D(\l_\D; \l_\D +  \phid_\D, \v_\D) = \mathcal{T}_\D(\l_\D; \w_\D, \v_\D),
\end{equation}
since $(\l_\D +  \phid_\D)-\w_\D$ is proportional to $\one_\D$. 
Similarly to previous works on nonlinear HFV schemes for semiconductor models~\cite{Moatt:23SC}, the discrete quasi-Fermi potentials are the key variables to perform the analysis of the method.
As a last remark, notice that since $\phi\in H^1(\Omega)$, by boundedness of the interpolator (cf.~\cite[Proposition \rev{5.3}]{DPDro:20}), 
\begin{equation} \label{C4:eq:bounds:phi}
	|\phid_\D |_{1,\D} \lesssim 1.
\end{equation}

Let us now state some fundamental a priori relations. 
As for the exponential fitting scheme, the discrete entropy structure of the nonlinear scheme mainly results from the convexity of the entropy. 
\begin{restatable}[Fundamental a priori relations]{prop}{propnlapriori} \label{C4:prop:nl:apriori}
Let $\left (\l_\D^n \right)_{n \geq 1}$ be a given solution to the nonlinear scheme~\eqref{C4:sch:AD}, and $\left (\u_\D^n \right ) _{n \geq 1}$ be the corresponding discrete density. 
Then, the following a priori relations hold true:
\begin{enumerate}
	\item[(i)] the mass is preserved along time: 
\begin{equation} \label{C4:eq:nl:mass}
	\displaystyle \forall n \geq 1, \qquad \int_\Omega \mathfrak{u}^n_\M = \int_\Omega u^{0} = M;
\end{equation}
	\item[(ii)] a discrete entropy/dissipation relation is satisfied:
\begin{equation}\label{C4:eq:nl:entrorel}
	\displaystyle	\forall n \in \mathbb{N}, \qquad \frac{\Entro^{n+1} - \Entro^n }{\Delta t} + \Diss^{n+1}\leq 0, 
\end{equation}
where the discrete entropy and dissipation are non-negative quantities defined by
\[
	\Entro^n = \int_\Omega \mathfrak{u}^\infty_\M \Phi_1 \left ( \frac{\mathfrak{u}^n_\M}{\mathfrak{u}^\infty_\M}\right )
	\qquad \text{ and }  \qquad 
	\Diss^{n} = \mathcal{T}_\D (\l^n_\D;\w^n_\D, \w^n_\D)\,\text{ for }n\geq 1,
\]
with $\Phi_1:s \mapsto s \log(s) - s +1$ (and $\Phi_1(0) = 1$).
\end{enumerate}
\end{restatable}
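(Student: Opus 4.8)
The plan is to obtain both relations by testing the scheme~\eqref{C4:sch:test} against two well-chosen discrete functions. For the mass conservation~\eqref{C4:eq:nl:mass}, I would test~\eqref{C4:sch:test} with $\v_\D = \one_\D$: since $G_K(\one_K) = 0$ and $J_{K,\s}(\one_K) = \Pi_\s^k(1) - 1 = 0$ by~\eqref{C4:def:grad}--\eqref{C4:def:jumps}, every contribution entering $\mathcal{T}_K(\l_K^{n+1};\w_K^{n+1},\one_K)$ through~\eqref{C4:sch:cons}, \eqref{C4:sch:stab_nl} and~\eqref{C4:sch:aK} vanishes, so that $\mathcal{T}_\D(\l_\D^{n+1};\l_\D^{n+1}+\phid_\D,\one_\D) = 0$. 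This leaves $\int_\Omega \mathfrak{u}_\M^{n+1} = \int_\Omega \mathfrak{u}_\M^n$, and a straightforward induction using the initialisation~\eqref{C4:sch:ini} gives~\eqref{C4:eq:nl:mass}.

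Next, the non-negativity of $\Entro^n$ and $\Diss^n$ is essentially built in. The function $\Phi_1$ is convex on $\R_+$ and vanishes at $1$, hence $\Phi_1 \geq 0$; combined with $\mathfrak{u}_\M^\infty > 0$ and $\mathfrak{u}_\M^n > 0$ (by~\eqref{eq:rec.pos.den}), this yields $\Entro^n \geq 0$. For the dissipation, each of the three pieces of $\mathcal{T}_K(\l_K^n;\w_K^n,\w_K^n)$ is non-negative: $\mathcal{C}_K(\l_K^n;\w_K^n,\w_K^n) \geq 0$ by uniform ellipticity of $\Lambda$ and positivity of $\e^{\ell_K^n}$; $\mathcal{S}_K(\l_K^n;\w_K^n,\w_K^n) \geq 0$ since it is a sum of face terms with positive weights; and $a_K(\w_K^n,\w_K^n) \geq 0$ by~\eqref{C4:eq:coercivity}. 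Summing over $K \in \M$ gives $\Diss^n \geq 0$.

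For the entropy/dissipation inequality~\eqref{C4:eq:nl:entrorel}, I would test~\eqref{C4:sch:test} with $\v_\D = \w_\D^{n+1} \in \V_\D^k$ and use~\eqref{eq:TD} to replace $\mathcal{T}_\D(\l_\D^{n+1};\l_\D^{n+1}+\phid_\D,\w_\D^{n+1})$ by $\mathcal{T}_\D(\l_\D^{n+1};\w_\D^{n+1},\w_\D^{n+1}) = \Diss^{n+1}$, which reduces the claim to bounding $\sum_{K\in\M}\int_K (\mathfrak{u}_K^{n+1}-\mathfrak{u}_K^n)\,w_K^{n+1}$ from below by $\Entro^{n+1}-\Entro^n$. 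Here I would invoke the cellwise identity $w_K^{n+1} = \log\!\big(\mathfrak{u}_K^{n+1}/\mathfrak{u}_K^\infty\big)$, which follows from~\eqref{eq:qua.fer}, \eqref{eq:rec.pos.den} and~\eqref{eq:di.th.eq.nl}, so that the integrand at a point $x$ reads $(a-b)\log(a/c)$ with $a = \mathfrak{u}_K^{n+1}(x) > 0$, $c = \mathfrak{u}_K^\infty(x) > 0$, $b = \mathfrak{u}_K^n(x) \geq 0$. The pointwise inequality $(a-b)\log(a/c) \geq c\big(\Phi_1(a/c)-\Phi_1(b/c)\big)$ is nothing but the tangent-line (subgradient) inequality for the convex function $\Phi_1$ written at $a/c$ and evaluated at $b/c$, recalling $\Phi_1'(s)=\log s$; integrating over $K$ and summing over cells then yields~\eqref{C4:eq:nl:entrorel}.

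The argument is mostly bookkeeping; the only points requiring a little care are the verification of the identity $w_K^{n+1} = \log(\mathfrak{u}_K^{n+1}/\mathfrak{u}_K^\infty)$ and the treatment of the step $n = 0$, where $\mathfrak{u}_\M^0 = u^0$ may vanish so that no discrete logarithm potential $\l_\D^0$ is available: there the convexity inequality still holds pointwise with $b = 0$ and $\Phi_1(0)=1$ (checked by hand, the difference of the two sides being exactly $a\geq 0$), while $\Entro^0 = \int_\Omega \mathfrak{u}_\M^\infty\,\Phi_1(u^0/\mathfrak{u}_\M^\infty)$ is finite thanks to assumption~(iii) on $u^0$. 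I do not expect any genuine obstacle beyond this.
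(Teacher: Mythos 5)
Your proposal is correct and follows essentially the same route as the paper: mass conservation by testing with $\one_\D$, and the entropy inequality by testing with $\w_\D^{n+1}$, invoking~\eqref{eq:TD}, and using the convexity of $\Phi_1$ together with the identity $w_\M^{n+1}=\log(\mathfrak{u}_\M^{n+1}/\mathfrak{u}_\M^\infty)$ (the paper writes the convexity step at the integral level with $\Phi_1'$, which is the same tangent-line inequality you use pointwise). Your explicit treatment of the $n=0$ case with $b=0$ and of the non-negativity of $\Entro^n$ and $\Diss^n$ is a welcome addition that the paper leaves implicit.
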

\begin{proof}
Let $n\geq 0$.
Using $\one_\D$ as a test function in \eqref{C4:sch:test}, we get that the mass is conserved:
\[
	\int_\Omega \mathfrak{u}^{n+1}_\M = \int_\Omega \mathfrak{u}^{n}_\M.
\] 
Therefore, by~\eqref{C4:sch:ini}, we infer~\eqref{C4:eq:nl:mass}.
To establish the entropy relation, we first use the convexity of $\Phi_1$, which yields
\[	
	\Entro^{n+1} - \Entro^n 
		\leq \int_\Omega  \mathfrak{u}_\M^\infty \Phi_1'\left(  \frac{\mathfrak{u}^{n+1}_\M}{\mathfrak{u}^\infty_\M}\right ) 
			\frac{\mathfrak{u}^{n+1}_\M - \mathfrak{u}^{n}_\M }{\mathfrak{u}^\infty_\M}.
\]
Then, since $w_\M^{n+1} = \log \left ( \frac{\mathfrak{u}_\M^{n+1}}{\mathfrak{u}_\M^\infty} \right)$ and $\Phi_1 ' = \log$, one has
\begin{equation} \label{C4:eq:nl:conv:entro}
	\Entro^{n+1} - \Entro^n 
		\leq  \int_\Omega w_\M^{n+1} \left (\mathfrak{u}^{n+1}_\M - \mathfrak{u}^{n}_\M \right ).
\end{equation}	
On the other hand, testing~\eqref{C4:sch:test} with $\w_\D^{n+1} \in \V_\D^{k}$, and using~\eqref{eq:TD}, we get 
\[
	\int_\Omega w_\M^{n+1} \left (\mathfrak{u}^{n+1}_\M - \mathfrak{u}^{n}_\M \right )
		= - \Delta t \,\mathcal{T}_\D(\l_\D^{n+1}; \l_\D^{n+1} +  \phid_\D, \w^{n+1}_\D)		
		= - \Delta t \,\mathcal{T}_\D(\l_\D^{n+1}; \w_\D^{n+1}, \w^{n+1}_\D),
\]
which finally yields~\eqref{C4:eq:nl:entrorel} by definition of the discrete dissipation.
\end{proof}
\noindent
Remark that since $\mathfrak{u}_\M^0=u^0$, and $u^0\geq 0$ in $\Omega$, $u^0\in L^1(\Omega)$, and $\int_\Omega u^{0} \log( u^{0} ) < \infty$, one has $\Entro^0 < \infty$.
Note finally that the previous results hold true for any $\varepsilon \geq 0$ in~\eqref{C4:sch:stationnary}.


In the rest of this section, we focus on the existence of solutions and on their long-time behaviour. We henceforth assume that $\varepsilon > 0$.
The proofs for both results rely on a discrete a priori estimate, which is obtained by means of a high-order counterpart of~\cite[Lemma 2]{CHHLM:22}.
In order to perform the analysis, we first introduce an inner product $\langle \cdot , \cdot \rangle$ on $\V_\D^{k}$: 
\[
	\forall \z_\D, \v_\D \in \V_\D^{k}, \qquad 
	\left \langle \z_\D, \v_\D \right \rangle = 
	\sum_{K\in\M}\bigg(\int_K z_K v_K 
	+ \sum_{\s \in \E_K} h_\s \int_\s (z_K-z_\s) (v_K-v_\s)\bigg).
\]
We denote by $\|{\cdot}\|$ the corresponding Euclidean norm: 
\[
	\forall \v_\D \in \V_\D^{k}, \qquad 
	\| \v_\D \|^2 = \sum_{K \in \M } \bigg(\|v_K\|_{L^2(K)}^2  + \sum_{\s \in \E_K } h_\s\|v_K-v_\s\|_{L^2(\s)}^2\bigg).
\]
\begin{lemma}[Discrete boundedness by mass and energy semi-norm] \label{C4:lem:BbE}
Let $\l_\D \in \V_\D^{k}$, and assume that there exist $C_\sharp >0$ and $M_\sharp \geq M_\flat > 0$ such that 
\begin{equation} \label{C4:eq:bounds:nl}
	M_\flat \leq \int_\Omega \e^{\ell_\M} \leq M_\sharp 
	\qquad \text{ and } \qquad 
	|\l_\D |_{1,\D} \leq C_\sharp.
\end{equation}
Then, there exists a positive constant $C$, only depending on $M_\flat$, $M_\sharp$, $C_\sharp$, $\Omega$, $\theta$, $k$ and $h_\flat$ such that 
\[
	\| \l_\D \| \leq C.
\]
\end{lemma}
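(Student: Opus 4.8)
The plan is to control $\|\l_\D\|$ by combining the bound on $|\l_\D|_{1,\D}$ with the two-sided control on $\int_\Omega \e^{\ell_\M}$, the point being that the seminorm alone leaves the constant component of $\l_\D$ undetermined, and it is precisely the mass-type quantity $\int_\Omega \e^{\ell_\M}$ that pins it down. Write $\l_\D = \l_\D^\star + c\,\one_\D$ where $c$ is chosen so that $\l_\D^\star \in \V_{\D,0}^k$ (i.e.\ $\int_\Omega \ell_\M^\star = 0$); then $|\l_\D^\star|_{1,\D} = |\l_\D|_{1,\D} \le C_\sharp$, and by the discrete Poincar\'e--Wirtinger inequality~\eqref{C4:eq:poinca} together with the standard HHO bound on the face-jump part of $\|{\cdot}\|$ (which on $\V_{\D,0}^k$ is controlled by $|{\cdot}|_{1,\D}$, up to powers of $h_\flat$), one gets $\|\l_\D^\star\| \lesssim C_\sharp$ with a constant depending on $\Omega$, $\theta$, $k$, $h_\flat$. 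So the whole task reduces to bounding $|c|$, and then $\|\l_\D\| \le \|\l_\D^\star\| + |c|\,\|\one_\D\|$ with $\|\one_\D\|$ depending only on $|\Omega|$ and $h_\flat$ (via the jump terms, which vanish here, so really just $|\Omega|^{1/2}$).

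To bound $|c|$, I would use $\ell_\M = \ell_\M^\star + c$ and the monotonicity of the exponential: $\int_\Omega \e^{\ell_\M} = \e^{c}\int_\Omega \e^{\ell_\M^\star}$. An upper bound on $c$ follows from the lower bound $M_\flat \le \int_\Omega\e^{\ell_\M}$ together with an \emph{upper} bound on $\int_\Omega \e^{\ell_\M^\star}$; a lower bound on $c$ follows from $\int_\Omega \e^{\ell_\M} \le M_\sharp$ together with a \emph{lower} bound on $\int_\Omega \e^{\ell_\M^\star}$. For the upper bound on $\int_\Omega\e^{\ell_\M^\star}$, I would invoke a discrete Moser--Trudinger / Sobolev-type inequality: since $\int_\Omega \ell_\M^\star = 0$ and $|\ell_\M^\star|$ is controlled in a discrete $H^1$ seminorm by $C_\sharp$, the broken $L^2$ (indeed any $L^p$, $p<\infty$ on each cell, using the local polynomial degree $k{+}1$ and the inverse inequality on the fixed mesh) norm of $\ell_\M^\star$ is bounded by a constant depending on $C_\sharp,\Omega,\theta,k,h_\flat$; then $\int_\Omega \e^{\ell_\M^\star} \le |\Omega|^{1/2}\,\e^{\|\ell_\M^\star\|_{L^\infty(\Omega)}}$ — and $\|\ell_\M^\star\|_{L^\infty}$ is finite because, on a \emph{fixed} mesh, all norms on the finite-dimensional space $\poly^{k+1}(K)$ are equivalent, with equivalence constants depending on $h_\flat$. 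For the lower bound on $\int_\Omega\e^{\ell_\M^\star}$, Jensen's inequality gives $\frac{1}{|\Omega|}\int_\Omega \e^{\ell_\M^\star} \ge \exp\!\big(\frac{1}{|\Omega|}\int_\Omega \ell_\M^\star\big) = 1$, so $\int_\Omega\e^{\ell_\M^\star}\ge |\Omega|$. Combining, $|c|$ is bounded in terms of $M_\flat$, $M_\sharp$, $C_\sharp$, $\Omega$, $\theta$, $k$, $h_\flat$, which is exactly the claimed dependence.

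The main obstacle is the upper bound on $\int_\Omega \e^{\ell_\M^\star}$: unlike in the lowest-order HFV case of~\cite{CHHLM:22}, where $\ell_\M^\star$ is piecewise constant and one argues cell by cell with elementary estimates, here $\ell_\M^\star$ is a genuine piecewise polynomial of degree $k{+}1$, so one cannot bound its exponential pointwise by its mean value plus a jump term. I would handle this by first bounding $\|\ell_\M^\star\|_{L^2(\Omega)}$ via~\eqref{C4:eq:poinca}, then passing to $\|\ell_\M^\star\|_{L^\infty(\Omega)}$ through the norm-equivalence on $\poly^{k+1}(K)$ — this is where the dependence on $h_\flat$ (and not merely on $\theta$) enters, and it is the reason the statement of Lemma~\ref{C4:lem:BbE} explicitly carries $h_\flat$ among its parameters. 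Everything else (splitting off the constant, controlling the jump terms in $\|{\cdot}\|$, the two applications of Jensen) is routine. A clean alternative, avoiding the $L^\infty$ detour, is to use a discrete exponential-integrability estimate directly; but given that the mesh is fixed throughout this lemma, the crude norm-equivalence argument is the most economical route.
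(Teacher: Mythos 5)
Your proposal is correct and follows essentially the same route as the paper: split off the mean value (the paper works with $\langle\ell_\M\rangle$ directly rather than through an explicit decomposition $\l_\D=\l_\D^\star+c\,\one_\D$, but this is the same thing), bound the fluctuation via the discrete Poincar\'e--Wirtinger inequality, then pin down the constant using Jensen's inequality in one direction and an $h_\flat$-dependent $L^2$-to-$L^\infty$ inverse inequality (the paper's ``reverse Lebesgue embedding'') in the other. Two cosmetic slips that do not affect the argument: you have interchanged which pair of bounds yields the upper versus the lower bound on $c$ (the bound $M_\flat\le\e^{c}\int_\Omega\e^{\ell_\M^\star}$ combined with the \emph{upper} bound on $\int_\Omega\e^{\ell_\M^\star}$ gives a \emph{lower} bound on $c$, and vice versa), and the estimate $\int_\Omega\e^{\ell_\M^\star}\le|\Omega|\,\e^{\|\ell_\M^\star\|_{L^\infty(\Omega)}}$ should carry the factor $|\Omega|$ rather than $|\Omega|^{1/2}$.
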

\begin{proof}
  Let us first remark that
  \[
  \sum_{K\in\M}\sum_{\s\in\E_K}h_\s\|\ell_K-\ell_\s\|_{L^2(\s)}^2\leq h_\D^{2}\sum_{K\in\M}\sum_{\s\in\E_K}\frac{1}{h_\s}\|\ell_K-\ell_\s\|_{L^2(\s)}^2\leq h_\D^{2}|\l_\D|_{1,\D}^2\leq {\rm diam}(\Omega)^2 C_\sharp^2.
  \]
  Hence, to estimate $\| \l_\D\|$, all \rev{that} remains to bound is $\|\ell_\M\|_{L^2(\Omega)}$.
  Recalling the notation $\langle z\rangle=\frac{1}{|\Omega|_d}\int_{\Omega}z$, and applying the discrete Poincar\'e--Wirtinger inequality~\eqref{C4:eq:poinca}, it holds
  \begin{equation} \label{eq:PW}
  \|\ell_\M-\langle\ell_\M\rangle\|_{L^2(\Omega)}\leq C_{PW}|\l_\D|_{1,\D}\leq C_{PW}C_\sharp,
  \end{equation}
  with $C_{PW}>0$ only depending on $\Omega$, $\theta$ and $k$.
  Thus, by the triangle inequality, we infer
  \[
  \|\ell_\M\|_{L^2(\Omega)}\leq\|\ell_\M-\langle\ell_\M\rangle\|_{L^2(\Omega)}+\|\langle\ell_\M\rangle\|_{L^2(\Omega)}\leq C_{PW}C_\sharp+|\Omega|_d^{\nicefrac12}|\langle\ell_\M\rangle|,
  \]
  and we are only left with estimating $|\langle\ell_\M\rangle|$. We proceed in two steps, showing first an upper bound on $\langle\ell_\M\rangle$, and then a lower bound. Applying Jensen's inequality, the upper bound can be readily obtained:
  \[
  \e^{\langle\ell_\M\rangle}\leq \langle\e^{\ell_\M}\rangle\leq\frac{M_\sharp}{|\Omega|_d},
  \]
  which yields $\langle\ell_\M\rangle\leq\log\big(\frac{M_\sharp}{|\Omega|_d}\big)$. To prove the lower bound, we start from~\eqref{eq:PW}, and we use local reverse Lebesgue embedding (cf.~\cite[Lemmas 1.25 and 1.12]{DPDro:20}). This yields
  \[
  \|\ell_\M-\langle\ell_\M\rangle\|_{L^\infty(\Omega)}\leq C_{RL}h_\flat^{-\nicefrac{d}{2}}\|\ell_\M-\langle\ell_\M\rangle\|_{L^2(\Omega)}\leq C_{PW}C_\sharp C_{RL}h_\flat^{-\nicefrac{d}{2}},
  \]
  where $C_{RL}>0$ only depends on $d$, $\theta$ and $k$. Then, remarking that
  \[
  \e^{\ell_\M}=\e^{\langle\ell_\M\rangle}\e^{\left(\ell_\M-\langle\ell_\M\rangle\right)}\leq\e^{\langle\ell_\M\rangle}\e^{C_{PW}C_\sharp C_{RL}h_\flat^{-\nicefrac{d}{2}}},
  \]
  and integrating over $\Omega$, we get
  \[
  \int_{\Omega}\e^{\ell_\M}\leq\e^{\langle\ell_\M\rangle}|\Omega|_d\e^{C_{PW}C_\sharp C_{RL}h_\flat^{-\nicefrac{d}{2}}}.
  \]
  Now, using the lower bound on $\int_{\Omega}\e^{\ell_\M}$, and taking the logarithm, we finally infer that
  \[
  \log\left(\frac{M_\flat}{|\Omega|_d\e^{C_{PW}C_\sharp C_{RL}h_\flat^{-\nicefrac{d}{2}}}}\right)\leq\langle\ell_\M\rangle.
  \]
  This concludes the proof.

\end{proof}

\noindent
We now state the existence result, which holds true for positive $\varepsilon$. 
The proof adopts the methodology developed in~\cite{CHHLM:22} in the (nonlinear) HFV context.

\begin{restatable}[Existence of solutions to the nonlinear scheme~\eqref{C4:sch:AD}]{theorem}{propnlExist} \label{C4:th:nl:exist}
Assume that the stabilisation parameter $\varepsilon$ in~\eqref{C4:sch:stationnary} is positive.
Then, there exists at least one solution $\left (\l_\D^n \right)_{n \geq 1}$ to the scheme \eqref{C4:sch:AD}. The corresponding discrete densities $\left (\u_\D^n \right ) _{n \geq 1}$, defined by~\eqref{eq:rec.pos.den}, are positive.
\end{restatable}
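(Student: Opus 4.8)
The plan is to solve the scheme one time step at a time, by induction on $n$, treating each nonlinear time-step problem by a Brouwer topological degree argument in the spirit of~\cite{CHHLM:22} and exploiting the a priori relations of Proposition~\ref{C4:prop:nl:apriori} together with Lemma~\ref{C4:lem:BbE}. Fix $n\geq 0$ and assume inductively that $\mathfrak{u}^n_\M$ is a given non-negative function on $\Omega$ with $\int_\Omega\mathfrak{u}^n_\M=M>0$ and $\int_\Omega\mathfrak{u}^\infty_\M\,\Phi_1(\mathfrak{u}^n_\M/\mathfrak{u}^\infty_\M)<\infty$; this holds for $n=0$ by~\eqref{C4:sch:ini} and assumption~(iii). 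It then suffices to exhibit one $\l_\D^{n+1}\in\V_\D^{k}$ solving~\eqref{C4:sch:test}: setting $\mathfrak{u}^{n+1}_K=\e^{\ell^{n+1}_K}>0$ as in~\eqref{C4:sch:u}--\eqref{eq:rec.pos.den} yields positivity of the density, testing~\eqref{C4:sch:test} with $\one_\D$ (and using $G_K(\one_K)=0$, $J_{K,\s}(\one_K)=0$, $a_K(\cdot,\one_K)=0$) gives $\int_\Omega\mathfrak{u}^{n+1}_\M=M$, and on the fixed mesh $\mathfrak{u}^{n+1}_\M$ is bounded away from $0$ and $\infty$ so the entropy stays finite; thus the induction hypotheses are preserved and, once the time-step problem is solved, the whole sequence $(\l_\D^n)_{n\geq 1}$ with positive densities $(\u_\D^n)_{n\geq 1}$ is obtained.

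To solve the time-step problem, equip the finite-dimensional space $\V_\D^{k}$ with the inner product $\langle\cdot,\cdot\rangle$ and introduce, for $\mu\in[0,1]$, the continuous map $\mathcal{H}(\cdot,\mu):\V_\D^{k}\to\V_\D^{k}$ defined by
\[
\langle\mathcal{H}(\l_\D,\mu),\v_\D\rangle=\sum_{K\in\M}\int_K\frac{\e^{\ell_K}-\mathfrak{u}^n_K}{\Delta t}\,v_K+\mathcal{T}_\D(\mu\,\l_\D;\l_\D+\phid_\D,\v_\D),
\]
where only the nonlinear coefficients $\e^{\ell_K}$ and $\e^{\Pi_\s^k(\ell_K)},\e^{\ell_\s}$ in~\eqref{C4:sch:cons}--\eqref{C4:sch:stab_nl} get scaled by $\mu$. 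Its zeros at $\mu=1$ are exactly the solutions of~\eqref{C4:sch:test}. At $\mu=0$, by the definitions~\eqref{C4:sch:aK} and~\eqref{C4:sch:cons}--\eqref{C4:sch:stab_nl} one has $\mathcal{T}_\D(\zero_\D;\w_\D,\v_\D)=\sum_{K\in\M}(1+\varepsilon h_K^{k+2})\,a_K(\w_K,\v_K)$, a symmetric positive semi-definite form with kernel $\R\one_\D$; hence $\mathcal{H}(\cdot,0)$ is the Riesz representation of the gradient of the functional $\l_\D\mapsto\sum_{K\in\M}\tfrac{1}{\Delta t}\int_K(\e^{\ell_K}-\mathfrak{u}^n_K\ell_K)+\tfrac12\mathcal{T}_\D(\zero_\D;\l_\D+\phid_\D,\l_\D+\phid_\D)$, which is strictly convex and coercive (transverse to $\one_\D$ by the $a_\D$-coercivity~\eqref{C4:eq:coercivity}, and along $\one_\D$ because $t\mapsto\e^t\int_\Omega\e^{\ell_\M}/\Delta t-tM/\Delta t$ is coercive on $\R$ since $M>0$). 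Therefore $\mathcal{H}(\cdot,0)$ has a unique zero and $\deg(\mathcal{H}(\cdot,0),B_R,0)=1$ for every large enough $R$.

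The crucial step is a bound on the zeros of $\mathcal{H}(\cdot,\mu)$ that is \emph{uniform} in $\mu\in[0,1]$. Let $\mathcal{H}(\l_\D,\mu)=0$, with $\mathfrak{u}_K=\e^{\ell_K}$ and $\w_\D=\l_\D+\phid_\D-\log(c^M_{nl})\one_\D$ as in~\eqref{eq:qua.fer}. Testing with $\one_\D$ gives $\int_\Omega\mathfrak{u}_\M=M$ (mass is conserved all along the homotopy, since $\mathcal{T}_\D(\cdot;\cdot,\one_\D)=0$), so the two-sided hypothesis of Lemma~\ref{C4:lem:BbE} holds with $M_\flat=M_\sharp=M$. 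Testing with $\w_\D$, using~\eqref{eq:TD} (legitimate since $\mathcal{T}_\D$ sees its last two arguments only through $G_K$, $J_{K,\s}$, $a_K$), the pointwise convexity inequality $w_\M(\mathfrak{u}_\M-\mathfrak{u}^n_\M)\geq\mathfrak{u}^\infty_\M(\Phi_1(\mathfrak{u}_\M/\mathfrak{u}^\infty_\M)-\Phi_1(\mathfrak{u}^n_\M/\mathfrak{u}^\infty_\M))$ (recall $w_\M=\log(\mathfrak{u}_\M/\mathfrak{u}^\infty_\M)$, $\Phi_1'=\log$) together with $\Phi_1\geq0$, and the non-negativity of $\mathcal{C}_K(\mu\l_K;\w_K,\w_K)$ and $\mathcal{S}_K(\mu\l_K;\w_K,\w_K)$, one reaches
\[
\varepsilon\sum_{K\in\M}h_K^{k+2}\,a_K(\w_K,\w_K)\leq\mathcal{T}_\D(\mu\l_\D;\w_\D,\w_\D)\leq\frac{1}{\Delta t}\int_\Omega\mathfrak{u}^\infty_\M\,\Phi_1\!\left(\frac{\mathfrak{u}^n_\M}{\mathfrak{u}^\infty_\M}\right).
\]
Since $\varepsilon>0$ and $a_K$ controls $|\cdot|_{1,K}^2$ (cell-wise form of~\eqref{C4:eq:coercivity}), this yields $|\w_\D|_{1,\D}^2\lesssim(\varepsilon h_\flat^{k+2}\Delta t)^{-1}\int_\Omega\mathfrak{u}^\infty_\M\Phi_1(\mathfrak{u}^n_\M/\mathfrak{u}^\infty_\M)$, hence, by~\eqref{eq:qua.fer} and~\eqref{C4:eq:bounds:phi}, a $\mu$-uniform bound on $|\l_\D|_{1,\D}$. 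Lemma~\ref{C4:lem:BbE} then gives $\|\l_\D\|\leq C$ with $C$ independent of $\mu$. Choosing $R>C$, $\mathcal{H}(\cdot,\mu)$ never vanishes on $\partial B_R$; by homotopy invariance of the degree, $\deg(\mathcal{H}(\cdot,1),B_R,0)=\deg(\mathcal{H}(\cdot,0),B_R,0)=1\neq0$, so~\eqref{C4:sch:test} admits a solution $\l_\D^{n+1}$ in $B_R$, which closes the induction.

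\emph{Main obstacle.} The hard part is the $\mu$-uniform a priori estimate and its promotion to a control of $\|\cdot\|$: the dissipation only controls the seminorm $|\cdot|_{1,\D}$ --- and does so only because $\varepsilon>0$ and $\mathcal{C}_K,\mathcal{S}_K\geq0$ --- so passing to a genuine norm bound requires combining the two-sided mass bound with the reverse Lebesgue estimate inside Lemma~\ref{C4:lem:BbE}; unlike the low-order HFV case of~\cite{CHHLM:22}, the enriched cell unknowns of degree $k+1$ make this step depend on $h_\flat$, harmless at fixed mesh and time step but to be tracked. A secondary design point is the homotopy itself: scaling only the nonlinear coefficients (rather than the whole diffusion operator or the accumulation term) is exactly what keeps mass conserved --- hence the hypotheses of Lemma~\ref{C4:lem:BbE} available --- for every $\mu$, while still making the $\mu=0$ endpoint a coercive strictly convex minimisation of nonzero degree.
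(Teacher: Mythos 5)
Your proof is correct, but it takes a genuinely different topological route from the paper's. The paper also reduces to a one-step problem, represents it via the inner product $\langle\cdot,\cdot\rangle$ as a vector field $\G$ on $\V_\D^k$, and then \emph{regularises additively}, setting $\G^\mu=\G+\mu\,\mathrm{id}$: the sign condition $\langle\G^\mu(\w_\D),\w_\D\rangle\geq\mu\|\w_\D\|^2-\Entro^n/\Delta t$ on a sphere of radius $\sqrt{\Entro^n/(\mu\Delta t)}$ yields, via the Brouwer-type surjectivity lemma of~\cite{CHHLM:22}, a zero $\w_\D^\mu$ for each $\mu>0$; mass is then only \emph{approximately} conserved ($|\int_\Omega(\mathfrak{u}^\mu_\M-\mathfrak{u}^n_\M)|\lesssim\sqrt{\mu}$), so the two-sided mass hypothesis of Lemma~\ref{C4:lem:BbE} is available only for $\mu\leq\mu^n$ small, after which the $\mu$-uniform bound $\|\l_\D^\mu\|\leq C$ and a compactness extraction as $\mu\to0$ produce the solution. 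You instead run a homotopy that freezes the accumulation term and scales only the logarithm inside the exponential weights, which keeps mass conserved \emph{exactly} along the whole deformation (so Lemma~\ref{C4:lem:BbE} applies with $M_\flat=M_\sharp=M$ and no smallness restriction on the parameter), and you conclude by homotopy invariance of the Brouwer degree rather than by a limit passage; this also absorbs the degenerate case $\Entro^n=0$ that the paper treats separately. The a priori machinery is identical in both arguments: convexity of $\Phi_1$ bounding $\mathcal{T}_\D(\cdot;\w_\D,\w_\D)$ by $\Entro^n/\Delta t$, the $\varepsilon h_K^{k+2}a_K$ term converting this into a control of $|\w_\D|_{1,\D}$ (this is where $\varepsilon>0$ enters), and Lemma~\ref{C4:lem:BbE} upgrading mass plus seminorm bounds to a full norm bound. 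The price of your route is that you must separately establish that the $\mu=0$ endpoint has a zero of nonzero degree; your strict-convexity/coercivity argument does this (the Hessian $\frac{1}{\Delta t}\int_K\e^{\ell_K}v_K^2+\mathcal{T}_\D(\zero_\D;\v_K,\v_K)$ is positive definite, so the minimiser is nondegenerate with local index $+1$), but the coercivity claim is stated a little tersely: the linear term $-\frac{1}{\Delta t}\int_\Omega\mathfrak{u}^n_\M\ell_\M$ contributes, beyond $-tM/\Delta t$ along $\one_\D$, a term $-\frac{1}{\Delta t}\int_\Omega\mathfrak{u}^n_\M z_\M$ in the zero-mean component, and since $\mathfrak{u}^0_\M=u^0$ is only in $L^1(\Omega)$ you need the reverse Lebesgue embedding on the fixed mesh (as inside Lemma~\ref{C4:lem:BbE}) to see that it is dominated by the quadratic $a_\D$ contribution; this is routine on a fixed discretisation but should be said.
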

\begin{proof}
The proof proceeds by induction. Let $n \in \mathbb{N}$, and assume that $\mathfrak{u}_\M^n$ is well defined, following~\eqref{C4:sch:u} (if $n \geq 1$) or~\eqref{C4:sch:ini} (if $n=0$).
We now prove the existence of a solution $\l_\D^{n+1} \in \V_\D^{k}$ to~\eqref{C4:sch:test}. 
For convenience, instead of looking for the discrete logarithm potential, we will equivalently seek for the discrete quasi-Fermi potential $\w_\D^{n+1} = \l_\D^{n+1} + \phid_\D - \log(c^M_{nl}) \one_\D$ (cf.~\eqref{eq:qua.fer}).  

First, notice that, given any $\w_\D \in \V_\D^{k}$, and corresponding discrete logarithm potential $\l_\D$ (through~\eqref{eq:qua.fer}) and discrete density $\u_\D $ (through~\eqref{eq:rec.pos.den}), the map
\[
	\v_\D \mapsto \int_\Omega \frac{\mathfrak{u}_\M - \mathfrak{u}_\M^n}{\Delta t } v_\M 
		+ \mathcal{T}_\D(\l_\D; \w_\D, \v_\D)
\] 
is a bounded linear form on $\V_\D^{k}$.
Therefore, by the Riesz--Fr\'echet representation theorem, there exists a unique element $\G(\w_\D) \in \V_\D^{k}$ such that 
\[
	\forall \v_\D \in \V_\D^{k}, \qquad 
		\left \langle  \G(\w_\D) , \v_\D  \right \rangle 
		 = \int_\Omega \frac{\mathfrak{u}_\M - \mathfrak{u}_\M^n}{\Delta t } v_\M 
		 + \mathcal{T}_\D(\l_\D; \w_\D, \v_\D).
\] 
Remark that
$\w_\D \mapsto \G(\w_\D)$ is a continuous (nonlinear) map of $\V_\D^{k}$.
Note also that, for any discrete quasi-Fermi potential $\w_\D \in \V_\D^{k}$ such that $\G(\w_\D) = \zero_\D$, by~\eqref{eq:TD}, the corresponding discrete logarithm potential $\l_\D$ solves~\eqref{C4:sch:test}.
Our aim from now on is thus to show that $\G$ does vanish on $\V_\D^{k}$.

To this purpose, we introduce a regularisation of $\G$: given any $\mu > 0$, we let 
\[
	\G^\mu :  \V^k_\D\to\V^k_\D;\;\w_\D \mapsto \G(\w_\D) + \mu \w_\D.
\]
By definition of $\G$, one has 
\[
\begin{split}
    \left \langle  \G^\mu(\w_\D) , \w_\D  \right \rangle 
    			& =  \left \langle  \G(\w_\D) , \w_\D  \right \rangle  + \mu \| \w_\D\|^2 \\
    		& = \int_\Omega \frac{\mathfrak{u}_\M - \mathfrak{u}_\M^n}{\Delta t } w_\M  + \mathcal{T}_\D(\l_\D; \w_\D, \w_\D) + \mu\| \w_\D\|^2.
  \end{split}
\]
As already shown in the proof of Proposition~\ref{C4:prop:nl:apriori} (cf.~\eqref{C4:eq:nl:conv:entro}), by convexity of $\Phi_1$, one has 
\[
	\int_\Omega \frac{\mathfrak{u}_\M - \mathfrak{u}_\M^n}{\Delta t } w_\M  \geq \frac{\Entro(\w_\D) - \Entro^n}{\Delta t},
\]
where the discrete entropies are defined by
\[
	\Entro(\w_\D) = \int_\Omega \mathfrak{u}_\M^\infty \Phi_1 \left ( \frac{\mathfrak{u}_\M}{\mathfrak{u}_\M^\infty} \right ) 
	\quad \text{ and } \quad 
	\Entro^n =  \int_\Omega \mathfrak{u}_\M^\infty \Phi_1\left ( \frac{\mathfrak{u}_\M^n}{\mathfrak{u}_\M^\infty} \right ).
\]
As already mentioned, since $\Phi_1$ is a non-negative function, these two quantities are non-negative.
Note that it may occur that $\Entro^n = 0$ (which is equivalent to $\mathfrak{u}^n_\M = \mathfrak{u}_\M^\infty$ in $\Omega$ for $n\geq 1$, or $u^0=\mathfrak{u}_\M^\infty$ in $\Omega$), in which case $\l_\D = \l_\D^\infty$ is the unique solution to~\eqref{C4:sch:test} (uniqueness follows from the entropy relation~\eqref{C4:eq:nl:entrorel}).
In the following, we therefore assume that $\Entro^n > 0$.
The previous identities, and the non-negativity of the dissipation and entropy, imply that 
\begin{equation} \label{C4:eq:Gww} 
\begin{split}
    \left \langle  \G^\mu(\w_\D) , \w_\D  \right \rangle 
    			& \geq  \frac{\Entro(\w_\D) - \Entro^n}{\Delta t}  
    					+ \mathcal{T}_\D(\l_\D; \w_\D, \w_\D) + \mu \| \w_\D\|^2 \\ 
    			& \geq   \mu\| \w_\D\|^2 -\frac{ \Entro^n }{\Delta t } .
  \end{split}
\end{equation}
Letting $r=\sqrt{  \frac{\Entro^n}{\mu \Delta t }}>0$, one has that $\left \langle  \G^\mu(\w_\D) , \w_\D  \right \rangle \geq 0 $ for all $\w_\D \in \V_\D^{k}$ such that $\| \w_\D \| =r$.
Therefore, according to~\cite[Lemma 1]{CHHLM:22} (cf.~also \cite[Section 9.1]{Evans:10}), which is a by-product of Brouwer's fixed-point theorem, there exists $\w_\D^\mu \in  \V_\D^{k}$ such that 
\begin{equation} \label{C4:eq:wmu}
	\G^\mu(\w_\D^\mu) = \zero_\D 
	\qquad \text{ and } \qquad 
	\| \w_\D^\mu \| \leq \sqrt{  \frac{\Entro^n}{\mu \Delta t }}.
\end{equation}

Now, plugging $\w_\D^\mu$ in~\eqref{C4:eq:Gww}, and using that $\G^\mu(\w_\D^\mu) = \zero_\D$, we get 
\[
	\frac{\Entro(\w_\D^\mu)}{\Delta t} +   \mathcal{T}_\D(\l_\D^\mu; \w_\D^\mu, \w_\D^\mu) + \mu \| \w_\D^\mu\|^2  \leq \frac{ \Entro^n }{\Delta t }, 
\] 
so that $\mathcal{T}_\D(\l_\D^\mu; \w_\D^\mu, \w_\D^\mu)  \leq \frac{ \Entro^n }{\Delta t }$. 
Thus, recalling the definition~\eqref{C4:sch:stationnary}--\eqref{C4:sch:stationnary:global} of $\mathcal{T}_\D$, as well as the stability estimate~\eqref{C4:eq:coercivity} for $a_\D$, we infer that
\[
	\varepsilon\, h_\flat^{k+2} | \w_\D^\mu|_{1,\D}^2 \lesssim \frac{ \Entro^n }{\Delta t }. 
\] 
On the one hand, by~\eqref{eq:qua.fer} and the estimate~\eqref{C4:eq:bounds:phi} on $|\phid_\D|_{1,\D}$, it holds
\begin{equation} \label{C4:eq:bound:lmu}
	|\l_\D^\mu |_{1,\D} = |\w_\D^\mu - \phid_\D  |_{1,\D} 
		\lesssim \sqrt{\frac{ \Entro^n }{\varepsilon h_\flat^{k+2} \Delta t }} +1.
\end{equation}
On the other hand, by definition of $\G^\mu$, one first infers that
\[
\begin{split}
    0 = \left \langle  \G^\mu(\w_\D^\mu) , \one_\D  \right \rangle 
    			& =  \int_\Omega \frac{\mathfrak{u}_\M^\mu - \mathfrak{u}_\M^n}{\Delta t } 
    				+ \mathcal{T}_\D(\l_\D^\mu; \w_\D^\mu, \one_\D ) + \mu \left \langle \w_\D^\mu ,  \one_\D \right \rangle   \\  		
    			& = \int_\Omega \frac{\mathfrak{u}_\M^\mu - \mathfrak{u}_\M^n}{\Delta t }  +  \mu \left \langle \w_\D^\mu ,  \one_\D \right \rangle.
\end{split}
\]
Second, using the Cauchy--Schwarz inequality, followed by the bound~\eqref{C4:eq:wmu} on $\|\w_\D^\mu\|$, one gets
\[
	\left |\int_\Omega \left (\mathfrak{u}_\M^\mu - \mathfrak{u}_\M^n \right )\right | 
			\leq \mu \Delta t \| \w_\D^\mu \|  \|  \one_\D  \|
			\leq \sqrt{\mu}   \sqrt{  \Delta t \,\Entro^n \,|\Omega|_d},
                        \]
  where we have also used that $\|\one_\D\|=|\Omega|_d^{\nicefrac12}$.                      
Thus, letting $M^{n} =\int_\Omega \mathfrak{u}_\M^n > 0$ (recall that $\int_{\Omega}\mathfrak{u}^0_\M=M>0$), and $\mu^n = \frac{(M^{n})^2}{4  \Delta t \Entro^n |\Omega|_d} > 0$, for all $0 < \mu \leq   \mu^n$ one has
\begin{equation} \label{C4:eq:bound:massmu}
	\frac{M^n}{2}\leq \int_\Omega \e^{\ell_\M^\mu} \leq  \frac{3M^n}{2}.
\end{equation}
Leveraging~\eqref{C4:eq:bound:lmu} and~\eqref{C4:eq:bound:massmu}, one can eventually apply Lemma~\ref{C4:lem:BbE} with $M_\flat = \frac{M^n}{2}$, $M_\sharp = \frac{3M^n}{2}$, and $C_\sharp$ proportional to $\sqrt{\frac{ \Entro^n }{\varepsilon h_\flat^{k+2} \Delta t }} +1$ (note that these three constants do not depend on $\mu$): there exists a constant $C>0$, independent of $\mu$, such that 
\[
	\forall \mu \in (0, \mu^n], \qquad 
		\| \l_\D^\mu\|\leq C.
\] 
Then, by compactness, there exists $\l_\D^{n+1} \in \V_\D^{k}$ such that, up to extraction (not relabelled), $\l_\D^\mu \to \l_\D^{n+1}$ when $\mu \to 0$. On the other hand, $\G^\mu$ tends to $\G$ as $\mu$ tends to $0$. Therefore, letting $\w_\D^{n+1} = \l_\D^{n+1} + \phid_\D - \log(c^M_{nl}) \one_\D$, we have 
$ \zero_\D = \G^\mu(\w_\D^\mu) \to \G(\w_\D^{n+1})$ as  $\mu \to 0$, which implies that 
\[
	\G(\w_\D^{n+1}) = \zero_\D. 
\] 
It follows that $\l_\D^{n+1}$ is a solution to~\eqref{C4:sch:test}.
\end{proof}

\begin{rem}[Uniqueness of the solution]
  As for the low-order nonlinear VAG, DDFV and HFV schemes of~\cite{CaGui:17,CCHKr:18,CHHLM:22}, the uniqueness of the solution to~\eqref{C4:sch:AD} is still an open question. 
  A possible approach to show such a result could be to consider the relative discrete entropy of a solution with respect to another solution, and show that this quantity vanishes. 
\end{rem}

Last, we study the long-time behaviour of the nonlinear HHO scheme.

\begin{restatable}[Long-time behaviour of the nonlinear scheme]{prop}{propnlLT} \label{C4:prop:nl:LT}
Assume that the stabilisation parameter $\varepsilon$ in~\eqref{C4:sch:stationnary} is positive, and let $\left (\l_\D^n \right)_{n \geq 1}$ be a solution to the scheme \eqref{C4:sch:AD}. 
Then, the discrete solution converges in time towards the discrete equilibrium logarithm potential:  
\begin{equation} \label{C4:eq:nl:CV:LT}
	\l_\D^n \xrightarrow[n\to\infty]{}\l_\D^\infty \;\text{ in } \V_\D^{k}.
\end{equation}
Consequently, the corresponding \rev{reconstructed} discrete density $(\mathfrak{u}^n_\M)_{n \geq 1}$ converges to $\mathfrak{u}_\M^\infty$ in $L^\infty(\Omega)$.
\end{restatable}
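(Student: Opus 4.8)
The plan is to run the usual entropy argument in three movements: first deduce from the entropy/dissipation relation of Proposition~\ref{C4:prop:nl:apriori} that the dissipation vanishes as $n\to\infty$; second, use the $\varepsilon$-term in $\mathcal{T}_\D$ together with the coercivity estimate~\eqref{C4:eq:coercivity} to turn this into convergence of the discrete quasi-Fermi potential $\w_\D^n$ to $\zero_\D$ in the $|{\cdot}|_{1,\D}$ seminorm; third, recover the constant (kernel) mode using mass conservation, which upgrades the previous convergence to convergence in $\V_\D^k$, and finally transfer everything to the reconstructed density through the exponential.

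For the first two movements: since $\Diss^{n+1}\geq 0$, the relation~\eqref{C4:eq:nl:entrorel} shows that $(\Entro^n)_{n\geq 0}$ is non-increasing; it is also non-negative and, as observed right after Proposition~\ref{C4:prop:nl:apriori}, $\Entro^0<\infty$. Summing~\eqref{C4:eq:nl:entrorel} over $n$ yields $\Delta t\sum_{n\geq 1}\Diss^n\leq \Entro^0<\infty$, hence $\Diss^n\to 0$. Since $\varepsilon>0$ and the weights $\e^{\Pi_\s^k(\ell_K)}$, $\e^{\ell_\s}$, $\e^{\ell_K}$ in $\mathcal{C}_K,\mathcal{S}_K$ (see~\eqref{C4:sch:loc}) are positive, recalling~\eqref{C4:sch:stationnary}--\eqref{C4:sch:stationnary:global} and then the stability estimate~\eqref{C4:eq:coercivity} for $a_\D$, one gets
\[
|\w_\D^n|_{1,\D}^2 \lesssim a_\D(\w_\D^n,\w_\D^n)\leq \frac{1}{\varepsilon\,h_\flat^{k+2}}\,\mathcal{T}_\D(\l_\D^n;\w_\D^n,\w_\D^n) = \frac{\Diss^n}{\varepsilon\,h_\flat^{k+2}} \to 0 .
\]
Because, by~\eqref{eq:qua.fer} and the definition of $\l_\D^\infty=\log(c^M_{nl})\one_\D-\phid_\D$, one has exactly $\l_\D^n-\l_\D^\infty=\w_\D^n$, this already gives $|\l_\D^n-\l_\D^\infty|_{1,\D}\to 0$.

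It remains to control $\langle w_\M^n\rangle=\frac{1}{|\Omega|_d}\int_\Omega w_\M^n$, so that $\w_\D^n-\langle w_\M^n\rangle\one_\D\in\V_{\D,0}^k$. On that zero-mean subspace the discrete Poincar\'e--Wirtinger inequality~\eqref{C4:eq:poinca} (together with the elementary bound $h_\s\leq h_\D^2/h_\s$ for the jump contributions, as in the proof of Lemma~\ref{C4:lem:BbE}) yields $\|\w_\D^n-\langle w_\M^n\rangle\one_\D\|\lesssim|\w_\D^n|_{1,\D}\to 0$, while local reverse Lebesgue embedding (\cite[Lemmas~1.25 and~1.12]{DPDro:20}) gives $\|w_\M^n-\langle w_\M^n\rangle\|_{L^\infty(\Omega)}\lesssim h_\flat^{-\nicefrac{d}{2}}\|w_\M^n-\langle w_\M^n\rangle\|_{L^2(\Omega)}\lesssim h_\flat^{-\nicefrac{d}{2}}|\w_\D^n|_{1,\D}\to 0$. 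To pin down $\langle w_\M^n\rangle$ I would use mass conservation~\eqref{C4:eq:nl:mass} and the identity $\int_\Omega\mathfrak{u}_\M^\infty=M$: since $w_\M^n=\log(\mathfrak{u}_\M^n/\mathfrak{u}_\M^\infty)$, these read $\int_\Omega\mathfrak{u}_\M^\infty\,\e^{w_\M^n}=M=\int_\Omega\mathfrak{u}_\M^\infty$, whence $\e^{\langle w_\M^n\rangle}=M\big/\!\int_\Omega\mathfrak{u}_\M^\infty\,\e^{w_\M^n-\langle w_\M^n\rangle}$; as $w_\M^n-\langle w_\M^n\rangle\to 0$ uniformly and $\mathfrak{u}_\M^\infty\in L^1(\Omega)$, the denominator tends to $M$, so $\langle w_\M^n\rangle\to 0$. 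Hence $\w_\D^n\to\zero_\D$, i.e.~$\l_\D^n\to\l_\D^\infty$ in $\V_\D^k$, which is~\eqref{C4:eq:nl:CV:LT}. For the density: $\|\ell_\M^n-\ell_\M^\infty\|_{L^\infty(\Omega)}=\|w_\M^n\|_{L^\infty(\Omega)}\leq\|w_\M^n-\langle w_\M^n\rangle\|_{L^\infty(\Omega)}+|\langle w_\M^n\rangle|\to 0$, so $\ell_\M^n\to\ell_\M^\infty$ in $L^\infty(\Omega)$; composing with the exponential and using this uniform bound gives $\mathfrak{u}_\M^n=\e^{\ell_\M^n}\to\e^{\ell_\M^\infty}=\mathfrak{u}_\M^\infty$ in $L^\infty(\Omega)$.

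I expect the main obstacle to be precisely the recovery of the constant mode: the dissipation only controls $|{\cdot}|_{1,\D}$, whose kernel is spanned by $\one_\D$, so the convergence of the mean $\langle w_\M^n\rangle$ cannot be read off the entropy estimate and genuinely requires combining the \emph{nonlinear} mass-conservation constraint with a uniform (reverse Lebesgue) control of the oscillating part of $w_\M^n$. Everything else is bookkeeping with already-established estimates, and the constants here are allowed to depend on the fixed mesh and on $\varepsilon$.
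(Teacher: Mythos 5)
Your argument is correct, and it reaches the conclusion by a genuinely different route from the paper's in its second half. The first movement is identical: both proofs sum the entropy/dissipation relation~\eqref{C4:eq:nl:entrorel}, use the positivity of the consistency and stabilisation weights together with $\varepsilon>0$ and~\eqref{C4:eq:coercivity} to get $|\w_\D^n|_{1,\D}\to 0$, and exploit the exact identity $\l_\D^n-\l_\D^\infty=\w_\D^n$. The divergence is in how the constant mode is recovered. The paper combines the seminorm bound with mass conservation to invoke Lemma~\ref{C4:lem:BbE}, obtaining a uniform bound $\|\l_\D^n\|\leq C$, then extracts a convergent subsequence by finite-dimensional compactness, identifies its limit as $\l_\D^\infty$ (seminorm vanishing pins the limit to the span of $\one_\D$ modulo $\phid_\D$, and the mass constraint fixes the constant), and concludes by uniqueness of the limit point. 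You instead control the mean $\langle w_\M^n\rangle$ directly: Poincar\'e--Wirtinger plus the local reverse Lebesgue embedding give $\|w_\M^n-\langle w_\M^n\rangle\|_{L^\infty(\Omega)}\to 0$, and writing mass conservation as $\e^{\langle w_\M^n\rangle}\int_\Omega\mathfrak{u}_\M^\infty\,\e^{w_\M^n-\langle w_\M^n\rangle}=M=\int_\Omega\mathfrak{u}_\M^\infty$ forces $\langle w_\M^n\rangle\to 0$. This avoids both Lemma~\ref{C4:lem:BbE} and the subsequence-extraction step, yields convergence of the whole sequence directly, and delivers the $L^\infty$ convergence of $\ell_\M^n$ as a by-product rather than through norm equivalence; the price is the explicit (and $h_\flat$-dependent) inverse-inequality step, which is harmless here since the mesh is fixed. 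Both arguments ultimately rest on the same two inputs --- summability of the dissipation and the nonlinear mass constraint --- so your proof is a valid, arguably more self-contained, alternative.
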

\begin{proof}
First, remark that owing to the entropy relation~\eqref{C4:eq:nl:entrorel}, one has
\[
	\sum_{n \geq 1 } \Diss^n \leq \sum_{n \geq 1 } \frac{\Entro^{n-1} - \Entro^{n}}{\Delta t} \leq \frac{\Entro^0}{\Delta t}.
\]
Thus, according to the definition of the discrete dissipation $\Diss^n$, alongside with the definition~\eqref{C4:sch:stationnary}--\eqref{C4:sch:stationnary:global} of $\mathcal{T}_\D$, and the stability estimate~\eqref{C4:eq:coercivity} for $a_\D$, we infer that 
\[
	\sum_{n \geq 1}  | \w_\D^n|_{1,\D}^2 \lesssim  \frac{\Entro^0}{ \varepsilon h_\flat^{k+2}  \Delta t}.
\]
This implies, in particular, that 
\begin{equation} \label{C4:eq:diss:sommable}
	\forall n \geq 1, \quad
	| \w_\D^n|_{1,\D} \lesssim \sqrt{\frac{\Entro^0}{ \varepsilon h_\flat^{k+2} \Delta t}}
	\qquad \text{ and } \qquad 
	| \w_\D^n|_{1,\D} \xrightarrow[n\to\infty]{} 0.
\end{equation}
Let $n \geq 1$. By~\eqref{eq:qua.fer} and~\eqref{C4:eq:bounds:phi}, one has 
$ |\l_\D^n|_{1,\D} \lesssim \sqrt{\frac{\Entro^0}{ \varepsilon h_\flat^{k+2} \Delta t} }+1
$. 
On the other hand, by the mass preservation~\eqref{C4:eq:nl:mass}, we have $\int_\Omega \e^{\ell^n_\M} = M  > 0$.
Therefore, one can apply Lemma~\ref{C4:lem:BbE}, and infer the existence of a positive constant $C$ (which is independent of $n$) such that 
\begin{equation} \label{eq:bound}
	\forall n \geq 1, \qquad \| \l_\D^n \| \leq C.
\end{equation}
It follows, by compactness, that there exists $\l_\D \in \V_\D^{k}$ such that, up to extraction (not relabelled), 
\[
	\lim_{n \to \infty} \l_\D^n  = \l_\D  \;\text{ in } \V_\D^{k}.
\]
By~\eqref{C4:eq:diss:sommable},~\eqref{eq:qua.fer}, and continuity of $|{\cdot}|_{1,\D}$ on $\V_\D^{k}$, we infer that 
\[
	|\l_\D + \phid_\D |_{1,\D} = 0.
\]
This means that there exists $a \in \R$ such that $\l_\D + \phid_\D = a \one_\D$. By mass preservation, we get 
\[
	M = \int_\Omega \e^{\ell_\M^n} \xrightarrow[n\to\infty]{} \int_\Omega \e^{\ell_\M},
\]
so that $a = \log(c^M_{nl}) $, which implies that $\l_\D =\log(c^M_{nl})\one_\D -  \phid_\D = \l_\D^\infty$. 
By uniqueness of the limit, we finally infer the convergence of the whole sequence $\left( \l_\D^n \right )_{n \geq 1}$ towards $\l_\D^\infty$ in $\V^k_\D$.
This implies, in particular, that $\ell^n_\M \xrightarrow[n\to\infty]{} \ell_\M^\infty$ in $L^\infty(\Omega)$, by norm equivalence in finite-dimensional vector spaces.
Then, by the mean value theorem, we deduce that 
\[
	\|\mathfrak{u}_\M^n - \mathfrak{u}_\M^\infty \|_{L^\infty(\Omega)} = \|\e^{\ell_\M^n} - \e^{\ell_\M^\infty} \|_{L^\infty(\Omega)} 
		\leq \e^{ \max( \|\ell_\M^n \|_{L^\infty(\Omega)} , \|\ell_\M^\infty \|_{L^\infty(\Omega)}  )}
		\|\ell_\M^n - \ell_\M^\infty \|_{L^\infty(\Omega)}, 
\]
which implies, by uniform boundedness (in $n$) of $\left( \ell_\M^n \right )_{n \geq 1}$, the convergence of the \rev{reconstructed} discrete density in $L^\infty(\Omega)$.
\end{proof}

\begin{rem}[Non-uniformity of the bounds]
  Note that the estimate~\eqref{eq:bound} on the solution to~\eqref{C4:sch:AD} is not uniform with respect to the discretisation parameters $h_\D$ and $\Delta t$, nor with respect to the stabilisation parameter $\varepsilon$. Indeed, having a closer look to the dependencies of the corresponding upper bound (using Lemma~\ref{C4:lem:BbE}), one realises that it blows up as soon as either $h_\D$, $\Delta t$ or $\varepsilon$ tends to zero.
\end{rem}

\begin{rem}[Convergence to equilibrium] \label{C4:rem:LT:speed}
  Notice that the time-asymptotic result of Proposition~\ref{C4:prop:nl:LT} is relatively weaker than the one of Proposition~\ref{C4:prop:ExpF:time} in the exponential fitting context. For the latter result, the convergence to equilibrium is shown to be exponentially fast, and the decay rate $\nu_\omega$ uniform with respect to the discretisation parameters.
  The numerical results of~\cite{Moatt:23HO} and Section~\ref{C4:sec:LT} indicate that, also for the nonlinear scheme, the convergence is expected to be exponential, with seemingly uniform (and close to the PDE model one) decay rate.
  At the theoretical level, to prove exponential convergence to equilibrium, one has to establish a control of the discrete entropy by the discrete dissipation. 
  Adapting the arguments from~\cite[Theorem 3]{Moatt:23SC}, such a control can actually be established in the present context, but leads to a non-uniform decay rate (also depending on $\varepsilon$), and to a final result still only valid in the case $\varepsilon>0$.
  In order to showcase a uniform (and $\varepsilon$-independent) decay rate, and establish a result also valid in the case $\varepsilon=0$, a (high-order) uniform discrete Logarithmic-Sobolev inequality needs to be available (cf.~\cite{CHHLM:22} in the low-order HFV context).
  This is the subject of ongoing research. 
\end{rem}

\section{Numerical results} \label{C4:sec:num}

In this section, we extensively assess the high-order nonlinear scheme~\eqref{C4:sch:AD}. We study positivity preservation, convergence, efficiency (accuracy vs.~computational cost), and long-time behaviour. We also compare it, in terms of structure preservation, with the linear high-order exponential fitting scheme~\eqref{C4:sch:ExpF}.
All the test-cases considered below are set in the two-dimensional domain $\Omega = (0,1)^2$, and are (except for the last one) taken from~\cite{CHHLM:22}, to which we refer for more detailed descriptions.
Given a (face) degree $k\geq 0$, the nonlinear scheme~\eqref{C4:sch:AD} will be referred to as \texttt{nlhho\_k}, whereas the exponential fitting one~\eqref{C4:sch:ExpF} as \texttt{expf\_k}.
For the nonlinear scheme, we will always use below the value $\varepsilon=1$ for the parameter $\varepsilon$ in~\eqref{C4:sch:stationnary}.
However, in some situations, we will compare the two values $\varepsilon = 1$ and $\varepsilon =0$. The nonlinear scheme with $\varepsilon = 0$ will then be denoted \texttt{nlhho\_k\_0}.

\subsection{Implementation}


All numerical tests presented below have been run on a laptop equipped with an Intel Core i7-9850H processor clocked at 2.60GHz and 32Gb of RAM.
Our HHO implementation makes use of monomial basis functions for both the cell and face unknowns. 
Such a choice is known to introduce numerical instabilities for large values of $k$, we thus restrict our study to $k\leq 3$.
The use of orthonormal basis functions, which is expected to improve on this situation (in particular for the convergence of the Newton algorithm in the nonlinear case), shall be studied in future works.
We use quadrature formulas based on the Dunavant rules~\cite{Dunav:85} (after subtessellation).
To cope with non-polynomial integrands, we employ quadrature formulas of order $2k+5$.
We performed a few tests (not reported here) with higher-order formulas, and did not observe any significant changes.
Last, the local computations are performed sequentially. One could expect a significant gain in terms of performances parallelising the latter.
We discuss below some important implementation aspects for both schemes.

\subsubsection{Exponential fitting scheme}

For the linear exponential fitting scheme, the implementation follows the classical HHO strategy for linear diffusion problems.
We directly solve for the discrete Slotboom variable $(\ro_\D^n)_{n \geq 1}$.
As standard for skeletal methods, we do not solve the full linear system, but first perform static condensation, which allows one to locally eliminate the cell unknowns.
Since the scheme relies on the same LHS matrix at each time step, we perform once and for all an LU decomposition of the matrix at the beginning of the computation.
At each time step, the solution is then inexpensive (the RHS has to be updated, but only through a matrix-vector product).
 
We do not address in this work the main questions which were highlighted in~\cite[Section 5.1.2]{CHHLM:22} in the low-order HFV context, about the (harmonic) averaging of $\omega$ (which is related to the choice of quadrature formulas for the high-order scheme), and the preconditioning of the system (which was equivalent, in the simple HFV context, to choose to solve the system in the density variable).
These aspects shall be investigated in future works. 
Nonetheless, in view of the results obtained in~\cite{CHHLM:22} for the HFV exponential fitting scheme, we believe these potential improvements will have no effect on the positivity violation issues.

\subsubsection{Nonlinear scheme} \label{C4:sec:num:tricks}

The numerical scheme~\eqref{C4:sch:AD} requires to solve a nonlinear system of equations at each time step.
For $n \in \mathbb{N}$, one wants to find $\l_\D^{n+1} \in \V_\D^{k}$ solution to~\eqref{C4:sch:AD}: this scheme can be written as the equation 
\[
	\G^{n,\Delta t} (\l_\D^{n+1}) = \zero_\D, 
\]
with $\G^{n,\Delta t} : \V_\D^{k} \to \V_\D^{k}$ smooth (nonlinear) vector field.
Numerically, to find a zero of $\G^{n,\Delta t}$, we use a Newton method. 

In practice, the use of a naive method without any adaptation proves not to be enough to compute a solution in general. 
In order to get a robust implementation, which can handle various data and meshes, one has to deploy a few techniques. 
For further use, we let $\|\l_\D\|_{l^\infty} $ denote the $l^\infty$-norm of the coefficients of $\l_\D$ in the (cell and face) polynomial bases.
The map $\|{\cdot}\|_{l^\infty} $ is a norm on $\V_\D^{k}$, which is easily (and at very low cost) computable in practice.
To fix the ideas and the notation, the Newton method is defined as follows: given an initialisation $\l_{\D,0} \in  \V_\D^{k}$, and a time step $ 0 < \delta t \leq \Delta t$, one defines a sequence $(\l_{\D,i})_{i\geq 0}$ of elements of $\V_\D^{k}$ such that 
\begin{equation} \label{C4:eq:Newton}
	J^{n,\delta t}_{\l_{\D,i}} (\l_{\D,i+1} - \l_{\D,i} ) = - \G^{n,\delta t} \left ( \l_{\D,i} \right ), 
\end{equation}
where $\G^{n,\delta t}$ is the vector field associated to the nonlinear scheme~\eqref{C4:sch:AD} with time step $\delta t$ instead of $\Delta t$, and $J^{n,\delta t}_{\l_{\D,i}}$ is the differential (Jacobian in practice) of $\G^{n,\delta t}$ at $\l_{\D,i}$.
Note that, in practice, we do not solve this linear system, but perform static condensation in order to (locally) eliminate the cell unknowns. 
The resulting linear system is called "condensed system" in what follows.
We discuss below the main tricks deployed to reach robustness in the implementation of the Newton algorithm.
\begin{itemize}	
	\item[\textbf{Stopping criterion.}] 
We define the relative norm of the residual $r_{i+1}$, and the norm of the objective function $g_i$, as 
\[
	r_{i+1} = \frac{\| \l_{\D,i+1} - \l_{\D,i} \|_{l^\infty}}{ \| \l_{\D,i} \|_{l^\infty}}
	\quad \text{ and } \quad 
	g_i = \| \G^{n,\delta t} ( \l_{\D,i}  )  \|_{l^\infty}.
        \]
We consider that the Newton method has converged when either
\[
	\left ( r_{i+1}\leq 0.1 \times \texttt{tol}  \right ) 
	 \quad\text{ or }\quad
	 \left (  r_{i+1} \leq  \texttt{tol}  \text{ and } g_i \leq \texttt{tol}  \right),
\]
with $\texttt{tol} = 5 . 10^{-10}$, in which case we set $\l_\D^{n+1} = \l_{\D,i+1}$.
On the other hand, if this criterion is not met at $i=50$, the method is considered as non-convergent (and we then proceed with a time step reduction, see below). 
In practice, for the tests collected in this article, we never reached $i = 50$, either because the method converged, or because of a loop break (see below).
		
	\item[\textbf{Loop break for unreasonably large $\l_\D$.}]
The computations of $\G^{n,\delta t} \left ( \l_{\D,i} \right )$ and $J^{n,\delta t}_{\l_{\D,i}}$ imply punctual evaluations of $\e^{\ell_{K,i}}$ (for $K\in\M$) and $\e^{\ell_{\s,i}}$ (for $\s\in\E$) in the quadrature formulas.
Such computations can lead to severe numerical issues if the values at the quadrature nodes are too large.
Therefore, we declare that $\l_{\D,i}$ is unreasonably large for the computations if there exists a cell quadrature node $x_{K,q} \in \overline{K}$, or a face quadrature node $x_{\s,q} \in \overline{\s}$, such that 
\[
	|\ell_{K,i}(x_{K,q})| \geq 100 
	\quad \text{ or } \quad 
	|\ell_{\s,i}(x_{\s,q})| \geq 100.
\]
In such a case, the method is immediately considered as non-convergent, and we proceed with a time step reduction (see below).
Note that the choice of the value $100$ allows one to compute densities $\u_\D$ over a range from $10^{-43}$ to $10^{43}$, and hence should not be a significant restriction in practice. 
In the numerical simulations presented below, the use of this loop-breaking procedure is absolutely necessary in order to avoid the evaluation of too large quantities, leading to some ``explosion'' of the method and crash of the code.
Moreover, we also operate a loop break if the linear solver does not perform a successful resolution of the condensed linear system associated to~\eqref{C4:eq:Newton}, which corresponds to situations for which either $J^{n,\delta t}_{\l_{\D,i}}$ or its condensed counterpart are not invertible. Such situations occur in practice, essentially on very coarse meshes.
	
	\item[\textbf{Adaptive time stepping.}] 
The previous strategies can lead to a solution failure for some given time step $\delta t$.
If the Newton method did not converge, we try to compute the solution for a smaller time step $\delta t/2$. 
On the other hand, if the method did converge, we use for the subsequent time step the larger value $2\delta t$. 
The maximal time step allowed is the initial one, denoted by $\Delta t$.
In practice, the scheme may perform numerous time step reductions at the beginning (early times) of the computation.
	
	\item[\textbf{Initialisation by truncation and filtration.}] 
As for any Newton method, the question of the initialisation is fundamental in order to get a robust implementation. It appears that, for $n \geq 1$, the natural initialisation $\l_{\D, 0} = \l_\D^n$ is satisfactory when used with the adaptative time stepping strategy. 
However, for $n=0$, such a choice is not possible, since $\l_\D^0$ does not exist in general if $u^{0}$ vanishes locally or is too small (cf.~Remark~\ref{C4:rem:nl:ini}). 
A first way of tackling this problem is to define a truncated initial logarithm potential $\tilde{\ell}^{0}$ as 
\[
	\tilde{\ell}^{0} = \log \left ( \max(u^{0}, 10^{-8} )  \right ), 
\]
and to initialise the Newton method with $\tilde{\l}_\D^{0}=\underline{I}^k_\D(\tilde{\ell}^0) \in \V_\D^{k}$, provided one can give a sense to the face components.
In fact, such a strategy entails another limitation: $\tilde{\l}_\D^{0}$ exhibits strong oscillations in the regions where the truncation is performed (this is also true when $u^{0}$ is discontinuous over $\Omega$, as in Section~\ref{C4:sec:pos}).
These oscillations usually make the method diverge, even with extremely small time step.
Therefore, we eventually initialise the method with a ``filtered'' (non-oscillating) discrete logarithm potential $\l^{0}_\D \in \V_\D^{k}$, which corresponds to a zero-order polynomial projection of $\tilde{\ell}^0$:
\[
	\ell^0_K = \Pi_K^0(\tilde{\ell}_{\mid K}^{0})\;\forall K \in \M
	\quad\text{ and }\quad
	\ell^0_\s = \Pi_\s^0(\tilde{\ell}_{\mid\s}^{0})\;\forall \s \in \E,
\]
still provided one can give a sense to the face components.
In practice, using $\l_{\D,0} = \l^{0}_\D$ as the first initialisation (when $n=0$) yields convergent Newton methods for all tests presented below. 
The use of filtered initial discrete data seems particularly crucial for high-order schemes ($k \geq 1$). For the lowest-order version of the scheme ($k=0$), the use of $\tilde{\l}_\D^{0}$ as a first initialisation (for $n=0$) often yields convergent Newton methods (up to time step reduction).
\end{itemize}

\noindent
Of course, the chosen values for the stopping criterion and the thresholds are arbitrary and could be modified. 
However, the set of values advocated here makes the scheme robust enough so as to be capable of computing solutions for all the test-cases in this article.

\begin{rem}[Potentials vs.~densities] \label{C4:rem:stopcrit}
  One of the main differences between the present nonlinear scheme and the low-order HFV ones from~\cite{CHHLM:22} and~\cite{Moatt:23SC} lies in the fact that we use here the potential $\ell$ as our (piecewise polynomial) unknown, whereas the density $u$ was used in the low-order schemes.
  Notice that, in the present context, choosing $u$ as the main variable would require to give a discrete meaning to $\nabla\log(u)$, which is not obvious for the following reason: polynomials of degree $\geq 1$ are not stable by the $\log$ function.
  As a by-product of seeking for a potential, our stopping criterion only provides information on $\ell$, while we are eventually interested in the corresponding density.
  Moreover, our criterion only takes into account the coefficients of the polynomials (through the use of the norm $\|{\cdot}\|_{l^\infty}$), but such a measure does not give much information about the effective behaviour of the unknowns.
  A more relevant stopping criterion could be to consider the residual in terms of densities
\[
	\frac{\| \mathfrak{u}_{\M,i+1} - \mathfrak{u}_{\M,i}\|_{L^2(\Omega)} }{ \|\mathfrak{u}_{\M,i}\|_{L^2(\Omega)}}
\]
(and analogous definition for the face unknowns) in order to ensure a satisfying accuracy on $u$.
  However, the main drawback of such a criterion is its evaluation cost.
  In this work, we thus chose to use instead a purely algebraic stopping criterion on $\ell$, whose cost is marginal. 
  The testing of other stopping criteria will be the subject of future investigations.
  Last, for the HFV schemes of~\cite{CHHLM:22,Moatt:23SC}, the following loop-breaking strategy was used: when the computed density had almost-zero (or even non-positive) components, one performed a time step reduction.
  Here, such a situation cannot occur, since $\ell$ is authorised to take any real value, but this apparent latitude on the potential is in fact pernicious.
  Indeed, situations in which $\ell$ takes \rev{negative values with large magnitude} are actually the counterpart of an almost-zero $u$ for the HFV schemes. Like their counterpart, they lead to divergent Newton methods.
  The main difficulty then lies in the design of a relevant criterion in order to avoid these situations.
\end{rem}

\subsection{Positivity} \label{C4:sec:pos}

This first section is dedicated to assessing discrete positivity preservation. 
For the test considered here, we set the advective potential and the anisotropy tensor to
\[
	\phi(x_1,x_2) = -\left (  (x_1-0.4)^2 +(x_2-0.6)^2   \right )
	\quad \text{ and } \quad 	
	\Lambda = 
	\begin{pmatrix}
		0.8 & 0 \\ 
		0 & 1
	\end{pmatrix} .
\]
For the initial datum, we take
$u^{0} = 10^{-3}  \,\1_{B} +   \1_{\Omega \setminus B}$,
where $B$ is the Euclidean ball
\[
	B=\left \lbrace (x_1,x_2) \in \R^2 \mid (x_1-0.5)^2 + (x_2-0.5)^2 \leq 0.2^2 \right \rbrace .
\]
These data ensure that the solution $u$ is positive on $\R_{+}\times\Omega$. 
We perform the simulation on the time interval $[0, 5 . 10^{-4}]$ with $\Delta t = 10^{-5}$, on a (fine) tilted hexagonal-dominant mesh featuring 4192 cells and 12512 edges.
The computed discrete densities are denoted by $(\u^n_\D)_{1 \leq n \leq N_f}$ and $(\ue^{\omega,n}_\D)_{1 \leq n \leq 50}$. 
Remark that the situation $N_f > 50$ may occur if the nonlinear scheme has to perform time step adaptation.

In Table~\ref{C4:table:positivity}, we collect the minimal values reached by the discrete solutions. 
The values of \texttt{mincellA} (for ``average'') are defined by
\[
	\min \left  \lbrace \frac{1}{|K|_d} \int_K \mathfrak{u}_K^n  \mid K \in \M, 1 \leq n \leq N_f \right \rbrace
	\quad \text{ and } \quad 
		\min \left  \lbrace \frac{1}{|K|_d} \int_K \mathfrak{u}_K^{\omega,n}  \mid K \in \M, 1 \leq n \leq 50 \right \rbrace,
\]
for, respectively, the nonlinear scheme and  the exponential fitting scheme.
The values of \texttt{mincellQN} are the minimal values taken by the densities at the cell quadrature nodes.
Analogous definitions hold for the faces.
The values of \texttt{\#resol} correspond to the number of linear systems solved during the computation.
Note that the size of these systems depends on the value of $k$, so it is not a relevant  information to compare the cost of the schemes for different values of $k$.
Last, \texttt{walltime} is the total time (in $s$) needed to compute the discrete solution (it includes the pre-computation steps, such as the computation of the matrices representing $G_K$).
\begin{table}[h]
\center
{\small
\center
\begin{tabularx}{0.9\textwidth}{|Y||Y|c|c|c|c|c|} 
\hline 
   \texttt{scheme}			& \texttt{walltime} & \texttt{\#resol}	& \texttt{mincellA}  	& \texttt{minfaceA} 	& \texttt{mincellQN} 	&  \texttt{minfaceQN} \\ 
\hline 
\hline 
	\texttt{nlhho\_0} 		& 7.17e+01		& 224 		&  1.00e-03		& 1.01-03 	& 2.41e-06 	&  1.01e-03	\\ 
\hline 	
	\texttt{nlhho\_1} 		& 4.13e+02		& 248		&  6.65e-04		& 2.05e-05 	& 1.78e-04 	&  3.57e-08 	\\ 
\hline 	
	\texttt{nlhho\_2} 		& 1.45e+03		& 251		&  9.50e-04		& 5.99e-04 	& 2.67e-07 	&  1.06e-05	\\ 
\hline 		
	\texttt{nlhho\_3} 		& 3.87e+03		& 254		&  9.85e-04		& 8.58e-04 	& 1.10e-05 	&  1.79e-05	\\ 
\hline	
\hline 	
	\texttt{expf\_0} 			& 5.66e-01 		& 50 		& 1.02e-03 		& 1.89e-03 	& -3.78e-01 	& 1.89e-03 	\\
\hline 	
	\texttt{expf\_1} 			& 2.23e+00 		& 50 		& -1.29e-02 		& -2.40e-01 	& -4.91e-01 	& -3.71e-01 	\\
\hline
	\texttt{expf\_2} 			& 6.34e+00 		& 50 		& -6.14e-03 		& -1.02e-01 	& -5.08e-01 	& -5.35e-01 	\\ 
\hline 
	\texttt{expf\_3} 			& 1.53e+01 		& 50 		& -3.24e-04 		& -1.02e-02 	& -5.52e-01 	& -4.05e-01 	\\ 
\hline
\end{tabularx} 
}
\caption{\textbf{Positivity of discrete solutions.} 
}\label{C4:table:positivity}
\end{table}
Recall that the 
exponential fitting scheme is linear (with corresponding matrix not depending on time), whence its extremely low cost compared to the nonlinear scheme.
Note, however, that when an LU decomposition is unaffordable and an iterative solver has to be used instead, \texttt{nlhho\_k} is approximately ``only'' five times more costly than \texttt{expf\_k}.
The results of Table~\ref{C4:table:positivity} first indicate that, as expected, all nonlinear schemes preserve the positivity of the discrete solution.
On the other hand, none of the linear schemes preserves positivity on the whole domain $\Omega$. In fact, except \texttt{expf\_0}, all linear schemes studied here do not even preserve the average positivity on each cell, in the sense that there exists $K_0 \in \M$ and $n_0$ integer such that 
\[
	\int_{K_0} \mathfrak{u}^{\omega,n_0}_{K_0} < 0.
\]
Moreover, it is interesting to note that the positivity violation peak (which can be approximated by $|\texttt{mincellQN}|$ and $|\texttt{minfaceQN}|$) increases as $k$ increases, whereas in average (values of $|\texttt{mincellA}|$ and $|\texttt{minfaceA}|$) the lack of positivity becomes smaller as the order increases.

At this stage, it is worth pointing out the fact that quantifying the negativity of the solution is much more difficult for high-order schemes, since it is not possible to ``count'' the number of negative values (which correspond to the degrees of freedom for low-order schemes).
While the \texttt{mincellQN} value gives information about the minimum value reached on the whole domain, it does not give any indication about the measure of the set $\lbrace  x\in \overline{\Omega} \mid \mathfrak{u}^{\omega,n}_\M(x)< 0\rbrace$ where the discrete cell unknown takes negative values. 
The same remark applies to \texttt{mincellA}. 
As an attempt to provide an idea of the size of this set, we display in Table~\ref{C4:table:nbnegative} the number of cells with negative average over the whole simulation, defined as the cardinal of the set
\[
	\left \lbrace (K,n) \in \M \times  [\![ 1; 50]\!] \mid \int_K \mathfrak{u}_K^{\omega,n} < 0 \right \rbrace.
\]	
These data reveal that, excluding \texttt{expf\_0} which performs quite well on this particular test, the higher the order, the smaller the size of the negative-average set.  
\begin{table}[h] 
\center
\begin{tabularx}{0.85\textwidth}{|Y||c|c|c|c|c|}
\hline 
\texttt{scheme} & 
\texttt{expf\_0} & \texttt{expf\_1} & \texttt{expf\_2} & \texttt{expf\_3} \\ 
\hline \hline
\texttt{\#cells with negative average} & 
0 & 824 & 136 & 1 \\ 
\hline 
\end{tabularx} 
\caption{\textbf{Number of negative cell averages.}}\label{C4:table:nbnegative}
\end{table}

The previous observations seem to indicate a competition between two phenomena for linear methods. As $k$ increases, the accuracy is improved, and therefore the discrete solution becomes closer to the exact one. 
Hence, in average, high-order schemes compute solutions with smaller area of negativity, and lesser positivity violation.
However, high values of $k$ induce larger oscillations for the polynomial solution: the computed solution takes negative values on smaller sets, but the (pointwise) undershoots become bigger as $k$ increases.
At the end, it seems that there is no hope to get a positive discrete solution on the whole domain $\Omega$ with a linear method. 

%
%
\begin{rem}[An accuracy criterion taking into account positivity]
The previous observations suggest that, for applications in which preserving the positivity of the solution is an essential feature, the accuracy of the scheme should not simply be defined as an $L^p$-distance between the \rev{reconstructed} discrete solution $\mathfrak{u}_\M$ and the exact one $u$.
We believe that a relevant criterion in order to take into account both ``classical accuracy'' (distance between $\mathfrak{u}_\M$ and $u$) and positivity is to look at the relative Boltzmann entropy (or other kinds of relative $\Phi$-entropies as defined in~\cite{BLMVi:14}) with respect to the exact solution, that is 
\begin{equation} \label{C4:eq:error:entro}
	\texttt{Err}(\mathfrak{u}_\M) = \int_\Omega u \,\Phi_1 \left ( \frac{\mathfrak{u}_\M}{u} \right ), 
\end{equation}
where $\Phi_1(s) = s\log(s) -s +1$ for $s> 0$, $\Phi_1(0) = 1$, and $\Phi_1(s)$ takes large values for $s < 0$.
The interest of such a definition is twofold. 
First, the negativity of $\mathfrak{u}_\M$ is penalised.
Second, if $\mathfrak{u}_\M$ is positive and $\int_\Omega \mathfrak{u}_\M  = \int_\Omega u$ (which is the case in practice for problems with homogeneous Neumann boundary conditions), by Csisz\'ar--Kullback inequality (see e.g.~\cite[Lemma 5.6]{CCHHK:20}), one has 
\[
	 \| \mathfrak{u}_\M - u\|_{L^1(\Omega)} \leq  \sqrt{2 \|u\|_{L^1(\Omega)} \texttt{Err}(\mathfrak{u}_\M) }.
\]
\end{rem}

\subsection{Convergence and efficiency} \label{C4:sec:CV}

We here study the convergence as $(h_\D, \Delta t) \to (0,0)$ of the nonlinear scheme for different values of the polynomial degree $k$.
We consider a test-case with known exact solution. 
We set the advective potential and anisotropy tensor to
\[
	 \phi(x_1,x_2) = - x_1 
	\quad  \text{ and } \quad  
	 \Lambda = \begin{pmatrix}
		l_{x_1} & 0 \\ 
		0 & 1
	\end{pmatrix} 
\]
for $l_{x_1}>0$.
The exact solution is then given by 
\begin{equation} \label{C4:eq:testcase}
	u(t,x_1,x_2) = C_1\e^{-\alpha t + \frac{x_1}{2}} \left ( 2\pi \cos( \pi x_1) + \sin(\pi x_1) \right )
	+ 2 C_1 \pi \e^{ x_1 - \frac{1}{2}  }, 
\end{equation}
where $C_1> 0$ and $ \alpha = l_{x_1} \left ( \frac{1}{4} + \pi^2  \right )$. Note that $u^{0}$ vanishes on 
$\{ x_1 = 1 \}$, but for any $t > 0$, $u(t, \cdot) > 0$.
Here, our experiments are performed using $l_{x_1} =  1$ and $C_1 = 10^{-1}$.

%
\begin{figure}[h]
\begin{minipage}[c]{.51\linewidth}
\begin{tikzpicture}[scale= 0.86]
        \begin{loglogaxis}[
            legend style = { 
              at={(0.5,1.1)},
              anchor = south,
              tick label style={font=\footnotesize},
              legend columns=4
            },ylabel=\small{Relative $L^2_t(L^2_x)$-error},xlabel=\small{Mesh size $h_\D$}
          ]
          \addplot[color=blue,mark=triangle*] table[x=Meshsize,y=error_sol] {data/time_CV_adpt/errors_hho_0};          
          \addplot[color=red,mark=square*] table[x=Meshsize,y=error_sol] {data/time_CV_adpt/errors_hho_1}; 
          \addplot[color=OliveGreen,mark=*] table[x=Meshsize,y=error_sol] {data/time_CV_adpt/errors_hho_2};             
          \addplot[color=black,mark=star] table[x=Meshsize,y=error_sol] {data/time_CV_adpt/eps=0/errors_hho_2};
          \addplot[color=brown,mark=diamond*] table[x=Meshsize,y=error_sol] {data/time_CV_adpt/errors_hho_3};
          \logLogSlopeTriangle{0.90}{0.2}{0.1}{2}{black};
          \logLogSlopeTriangle{0.90}{0.2}{0.1}{3}{black};
          \logLogSlopeTriangle{0.90}{0.2}{0.1}{4}{black};
          \logLogSlopeTriangle{0.90}{0.2}{0.1}{5}{black};
          \legend{\tiny \texttt{nlhho\_0} , \tiny \texttt{nlhho\_1} , \tiny \texttt{nlhho\_2} , \tiny \texttt{nlhho\_2\_0}, \tiny \texttt{nlhho\_3} } 
        \end{loglogaxis}
      \end{tikzpicture}    
\end{minipage}
\begin{minipage}[c]{.51\linewidth}
\begin{tikzpicture}[scale= 0.86]
        \begin{loglogaxis}[
            legend style = { 
              at={(0.5,1.1)},
              anchor = south,
              tick label style={font=\footnotesize},
              legend columns=4
            },ylabel=\small{Relative $L^2_t(H^1_x)$-error},xlabel=\small{Mesh size $h_\D$}
          ]          
          \addplot[color=blue,mark=triangle*] table[x=Meshsize,y=error_grad] {data/time_CV_adpt/errors_hho_0};          
          \addplot[color=red,mark=square*] table[x=Meshsize,y=error_grad] {data/time_CV_adpt/errors_hho_1}; 
          \addplot[color=OliveGreen,mark=*] table[x=Meshsize,y=error_grad] {data/time_CV_adpt/errors_hho_2};
          \addplot[color=black,mark=star] table[x=Meshsize,y=error_grad] {data/time_CV_adpt/eps=0/errors_hho_2};
          \addplot[color=brown,mark=diamond*] table[x=Meshsize,y=error_grad] {data/time_CV_adpt/errors_hho_3};
         
          \logLogSlopeTriangle{0.90}{0.2}{0.1}{1}{black};    
          \logLogSlopeTriangle{0.90}{0.2}{0.1}{2}{black};          
          \logLogSlopeTriangle{0.90}{0.2}{0.1}{3}{black};             
          \logLogSlopeTriangle{0.90}{0.2}{0.1}{4}{black};  
          \legend{\tiny \texttt{nlhho\_0} , \tiny \texttt{nlhho\_1} , \tiny \texttt{nlhho\_2} , \tiny \texttt{nlhho\_2\_0}, \tiny \texttt{nlhho\_3} }      
        \end{loglogaxis}
      \end{tikzpicture}
\end{minipage}
\caption{\textbf{Accuracy vs.~mesh size.} Relative errors on triangular meshes.}
\label{C4:fig:CV:evol:order}
\end{figure}
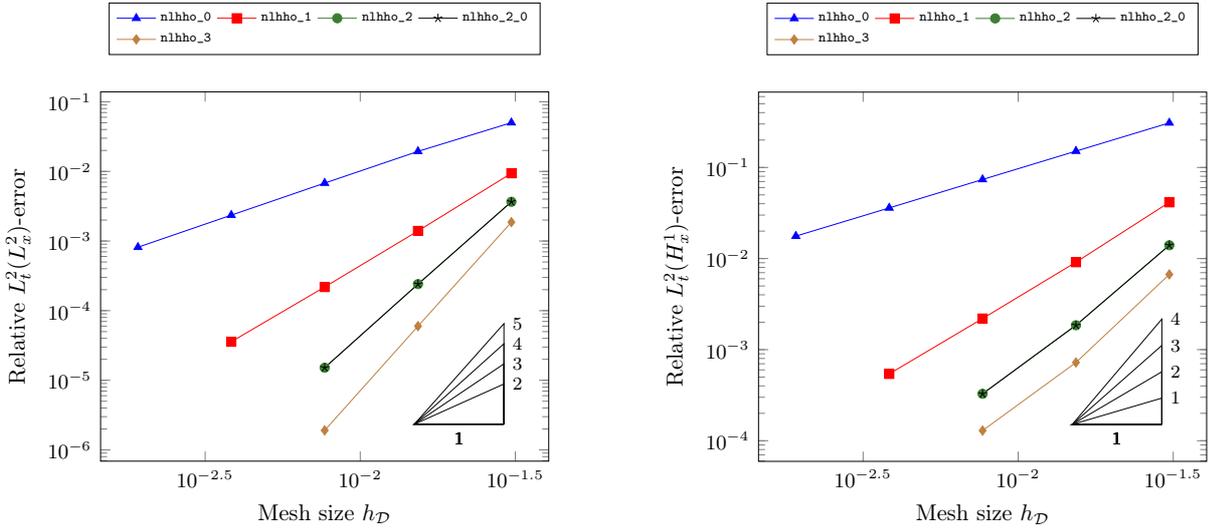

We compute the discrete solutions on the time interval $[0,0.1]$, and we denote by $(\u^n_\D)_{1 \leq n \leq N_f}$ the corresponding discrete densities. We monitor the relative $L^2_t(L^2_x)$-error and $L^2_t(H^1_x)$-error on the solution, respectively defined by 
\[
	 \sqrt{\frac{\sum_{n = 1}^{N_f} \delta t^n  \|\mathfrak{u}_\M^n - u(t^n, \cdot ) \|^2_{L^2(\Omega)} }
	 	{ \sum_{n = 1}^{N_f} \delta t^n  \|u(t^n, \cdot ) \|^2_{L^2(\Omega)}}}
	\qquad\text{ and }\qquad	
	\sqrt{\frac{ \sum_{n = 1}^{N_f}  \delta t^n  \| \gradd_\M(\u^n_\D) - \nabla u(t^n, \cdot ) \|^2_{L^2(\Omega)^d}}
	{\sum_{n = 1}^{N_f} \delta t^n  \|\nabla u(t^n, \cdot ) \|^2_{L^2(\Omega)^d}}}, 
\]
where $\delta t^n = t^{n}-t^{n-1}$, and $\sum_{1 \leq n \leq N_f} \delta t^n  = 0.1$. The discrete gradient of the densities $\gradd_\M(\u^n_\D)$ is defined as follows. For all $K\in\M$, $(\gradd_\M(\u^n_\D))_{\mid K}=\gradd_K(\u^n_K)$, where $\gradd_K(\u^n_K)$ is a smooth vector field on $K$ defined by mimicking at the discrete level the relation $\nabla u = \e^\ell \nabla \ell $:	
\begin{equation} \label{eq:gradd}
	\gradd_K (\u^n_K) = \e^{\ell_K^n}   G_K(\l_K^n) \quad\text{ in }K.
\end{equation}
Notice that, with the chosen error measures, we do not take into account the time $t=0$.
We perform our simulations on a triangular mesh family $(\D_i)_{1\leq i \leq 5}$, such that $h_{\D_{i}} /h_{\D_{i+1}} = 2$.
Since the time discretisation is of order one, in $L^\infty_t(L^2_x)$-norm, we expect the error to decrease as
  \[
  \texttt{Error}\leq C_T\Delta t+C_S(k)h_{\D_{i}}^{k+2},
  \]
  where $C_T,C_S(k)>0$ are multiplicative constants respectively related to time and space discretisations, with $k\mapsto C_S(k)$ decreasing. We have $h_{\D_i}=h_{\D_1}/2^{i-1}$, so to balance the time and space contributions of the error upper bound, we need to take
  \[
  \Delta t(i,k)\sim\frac{\Delta t(1,k)}{2^{(i-1)(k+2)}},
    \]
    where $\Delta t(1,k)=\frac{C_S(k)}{C_T}h_{\D_1}^{k+2}$. For the values of $k$ we consider, we assume (and we verify in practice that it is relevant) that $\Delta t(1,k)\geq 0.05/2^{k+2}$ (see~\cite{ADPRu:17} for a theoretical study of $k\mapsto C_S(k)$ in the HHO context). Thus, for given $i$ and $k$, we define our (maximal) time step as
    \[
	\Delta t(i,k) = \frac{0.05}{ 2^{i(k+2)}}.
\]

On Figure~\ref{C4:fig:CV:evol:order}, we plot the relative errors as functions of the mesh size $h_\D$ for $k\in\{0,1,2,3\}$.
For completeness, we also include the scheme \texttt{nlhho\_2\_0} (i.e.~with $\varepsilon = 0$ in~\eqref{C4:sch:stationnary} for $k=2$) in our comparison.
First, we observe that \texttt{nlhho\_2} and \texttt{nlhho\_2\_0} have the same behaviour (the two plots are superimposed). Tests with other values of $k$, not shown here, indicate that the influence of $\varepsilon$ ($0$ or $1$) on the accuracy of the scheme is not noticeable. 
Second, we see that, as one could expect, the method \texttt{nlhho\_k} converges at order $k+2$ in $L^2_t(L^2_x)$-norm. In the $L^2_t(H^1_x)$-norm, if the expected convergence order of $k+1$ is attained for $k=0$ and $k=1$, then some sort of saturation appears for $k=2$ and $k=3$. Since this saturation does not show up in $L^2_t(L^2_x)$-norm, we suspect this might be due to our definition~\eqref{eq:gradd} of the discrete density gradient. Indeed, remark that, at the discrete level, the chain rule is violated, thus~\eqref{eq:gradd} is not exactly a discrete version of $\nabla\e^\ell$.

We now study efficiency, that is to say accuracy for a given computational cost.
On Figure~\ref{C4:fig:CV:evol:cost}, we plot the relative errors as functions of the simulation walltime (in $s$).
Here again, the results for the schemes \texttt{nlhho\_2} and \texttt{nlhho\_2\_0} are superimposed.
It is quite remarkable to observe that, even with a low-order discretisation in time, significant efficiency gains can be obtained using a \rev{larger} value of $k$\rev{, at least for values of $k\leq 2$}.
The gain is expected to be even larger after parallelising the local computations.
Of course, the use of higher-order time-stepping methods should also lead to significant gains of efficiency. This will be investigated in future works.

\begin{figure}[h]
\begin{minipage}[c]{.51\linewidth}
\begin{tikzpicture}[scale= 0.86]
        \begin{loglogaxis}[
            legend style = { 
              at={(0.5,1.1)},
              anchor = south,
              tick label style={font=\footnotesize},
              legend columns=4
            },ylabel=\small{Relative $L^2_t(L^2_x)$-error},xlabel=\small{Walltime (in $s$)}
          ]          
          \addplot[color=blue,mark=triangle*] table[x=time_cost,y=error_sol] {data/time_CV_adpt/errors_hho_0};          
          \addplot[color=red,mark=square*] table[x=time_cost,y=error_sol] {data/time_CV_adpt/errors_hho_1};
          \addplot[color=OliveGreen,mark=*] table[x=time_cost,y=error_sol] {data/time_CV_adpt/errors_hho_2};
          \addplot[color=black,mark=star] table[x=time_cost,y=error_sol] {data/time_CV_adpt/eps=0/errors_hho_2};
          \addplot[color=brown, mark=diamond*] table[x=time_cost,y=error_sol] {data/time_CV_adpt/errors_hho_3};  
          \legend{ \tiny \texttt{nlhho\_0} , \tiny \texttt{nlhho\_1} , \tiny \texttt{nlhho\_2}, \tiny \texttt{nlhho\_2\_0}, \tiny \texttt{nlhho\_3}  }      
        \end{loglogaxis}
      \end{tikzpicture}    
\end{minipage}
\begin{minipage}[c]{.51\linewidth}
\begin{tikzpicture}[scale= 0.86]
        \begin{loglogaxis}[
            legend style = { 
              at={(0.5,1.1)},
              anchor = south,
              tick label style={font=\footnotesize},
              legend columns=4
            },ylabel=\small{Relative $L^2_t(H^1_x)$-error},xlabel=\small{Walltime (in $s$)}
          ]               
          \addplot[color=blue,mark=triangle*] table[x=time_cost,y=error_grad] {data/time_CV_adpt/errors_hho_0};          
          \addplot[color=red,mark=square*] table[x=time_cost,y=error_grad] {data/time_CV_adpt/errors_hho_1};          
          \addplot[color=OliveGreen,mark=*] table[x=time_cost,y=error_grad] {data/time_CV_adpt/errors_hho_2};
          \addplot[color=black,mark=star] table[x=time_cost,y=error_grad] {data/time_CV_adpt/eps=0/errors_hho_2};
          \addplot[color=brown, mark=diamond*] table[x=time_cost,y=error_grad] {data/time_CV_adpt/errors_hho_3};            
          \legend{\tiny \texttt{nlhho\_0} , \tiny \texttt{nlhho\_1} , \tiny \texttt{nlhho\_2}  , \tiny \texttt{nlhho\_2\_0}, \tiny \texttt{nlhho\_3} } 
        \end{loglogaxis}
      \end{tikzpicture}
\end{minipage}
\caption{\textbf{Accuracy vs.~computational cost.} Relative errors on triangular meshes.}
\label{C4:fig:CV:evol:cost}
\end{figure}
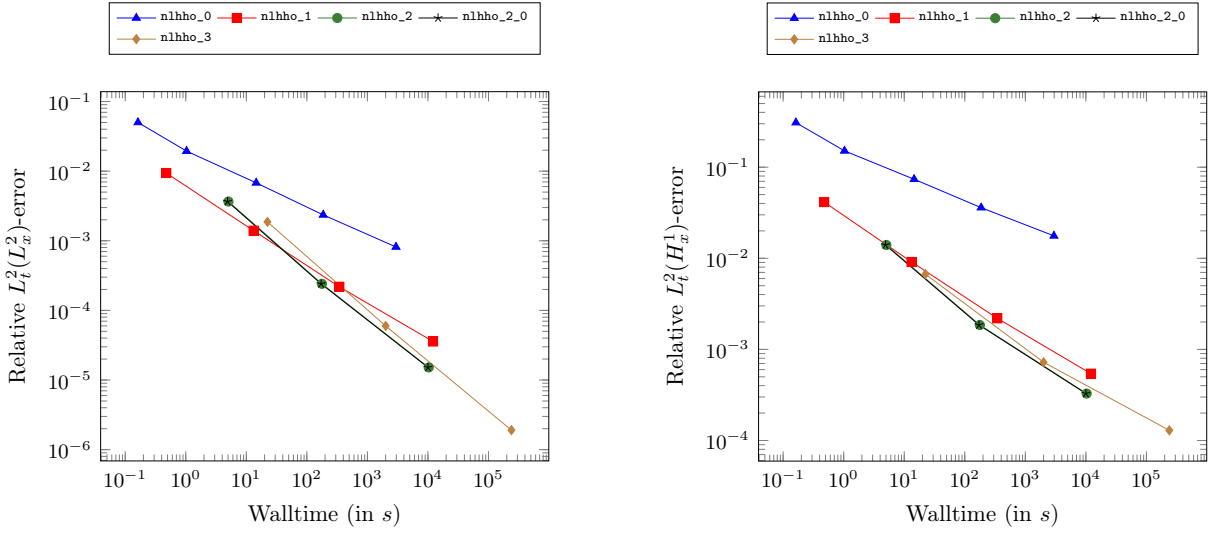

\begin{rem}[High-order schemes in time and space]
The extension of the nonlinear scheme~\eqref{C4:sch:AD} to arbitrary orders in time and space is a rather natural goal in order to achieve optimal efficiency.
However, even with a time discretisation of order $2$ (like for example \rev{Crank--Nicolson, which is perhaps the most natural extension to backward Euler}), there is currently no successful approach retaining the discrete entropy structure. 
Since this structure is the cornerstone of the analysis (including the existence of solutions), it is of utmost importance to preserve it.
Some numerical investigations on nonlinear entropic TPFA schemes for diffusive problems with BDF2 time discretisation have been performed in~\cite[Chapter 3]{Colin:16}, and indicate that such a time discretisation could \rev{also} be a good candidate, even in regard of long-time behaviour (see~\cite[Chapter 3.4.4]{Colin:16}).
An alternative approach is to consider space-time methods, as in~\cite{BPSto:22} in the context of conforming Galerkin discretisations of cross-diffusion systems. The extension of space-time techniques to polytopal grids is currently an active research area.
\end{rem}

\rev{For completeness, we finally perform simulations on distorted quadrangular meshes, and display the relative $L^2_t(L^2_x)$-errors on Figure~\ref{C4:fig:CV:quad}. Note that we use the same time step definition as for the previous simulations, whereas the initial mesh is coarser. As expected, the behaviour of the schemes is not strongly impacted by the mesh geometry, and \texttt{nlhho\_k} converges at order $k+2$ in $L^2_t(L^2_x)$-norm. When it comes to efficiency, increasing the value of $k$ leads to better accuracy for fixed computational cost, but the efficiency gain saturates for $k\geq 2$. It is also worth noting that on the coarsest mesh, \texttt{nlhho\_3} has to perform more time step reductions than the other schemes, because at some iterations the linear solver is unable to perform LU decomposition. These time step reductions occur not only at the beginning of the simulation, and are probably related to the bad conditioning of the system for high-order polynomials (we use here monomial bases). Based on these observations, using \texttt{nlhho\_2} seems to be a sound choice to optimise efficiency while ensuring a good numerical stability.
\begin{figure}[h]
\begin{minipage}[c]{.51\linewidth}
\begin{tikzpicture}[scale= 0.86]
        \begin{loglogaxis}[
            legend style = { 
              at={(0.5,1.1)},
              anchor = south,
              tick label style={font=\footnotesize},
              legend columns=4
            },ylabel=\small{Relative $L^2_t(L^2_x)$-error},xlabel=\small{Mesh size $h_\D$}
          ]
          \addplot[color=blue,mark=triangle*] table[x=Meshsize,y=error_sol] {data/time_CV_quad/errors_hho_0};          
          \addplot[color=red,mark=square*] table[x=Meshsize,y=error_sol] {data/time_CV_quad/errors_hho_1}; 
          \addplot[color=OliveGreen,mark=*] table[x=Meshsize,y=error_sol] {data/time_CV_quad/errors_hho_2};      
          \addplot[color=brown,mark=diamond*] table[x=Meshsize,y=error_sol] {data/time_CV_quad/errors_hho_3};
          \logLogSlopeTriangle{0.90}{0.2}{0.1}{2}{black};
          \logLogSlopeTriangle{0.90}{0.2}{0.1}{3}{black};
          \logLogSlopeTriangle{0.90}{0.2}{0.1}{4}{black};
          \logLogSlopeTriangle{0.90}{0.2}{0.1}{5}{black};
          \legend{\tiny \texttt{nlhho\_0} , \tiny \texttt{nlhho\_1} , \tiny \texttt{nlhho\_2}, \tiny \texttt{nlhho\_3} } 
        \end{loglogaxis}
      \end{tikzpicture}    
\end{minipage}
\begin{minipage}[c]{.51\linewidth}
\begin{tikzpicture}[scale= 0.86]
        \begin{loglogaxis}[
            legend style = { 
              at={(0.5,1.1)},
              anchor = south,
              tick label style={font=\footnotesize},
              legend columns=4
            },ylabel=\small{Relative $L^2_t(L^2_x)$-error},xlabel=\small{Walltime (in $s$)}
          ]          
          \addplot[color=blue,mark=triangle*] table[x=time_cost,y=error_sol] {data/time_CV_quad/errors_hho_0};          
          \addplot[color=red,mark=square*] table[x=time_cost,y=error_sol] {data/time_CV_quad/errors_hho_1};
          \addplot[color=OliveGreen,mark=*] table[x=time_cost,y=error_sol] {data/time_CV_quad/errors_hho_2};
          \addplot[color=brown, mark=diamond*] table[x=time_cost,y=error_sol] {data/time_CV_quad/errors_hho_3};  
          \legend{ \tiny \texttt{nlhho\_0} , \tiny \texttt{nlhho\_1} , \tiny \texttt{nlhho\_2} , \tiny \texttt{nlhho\_3}  }      
        \end{loglogaxis}
      \end{tikzpicture}    
\end{minipage}
\caption{\rev{\textbf{Accuracy on general meshes.} Relative $L^2_t(L^2_x)$-error on distorted quadrangular meshes.}}
\label{C4:fig:CV:quad}
\end{figure}
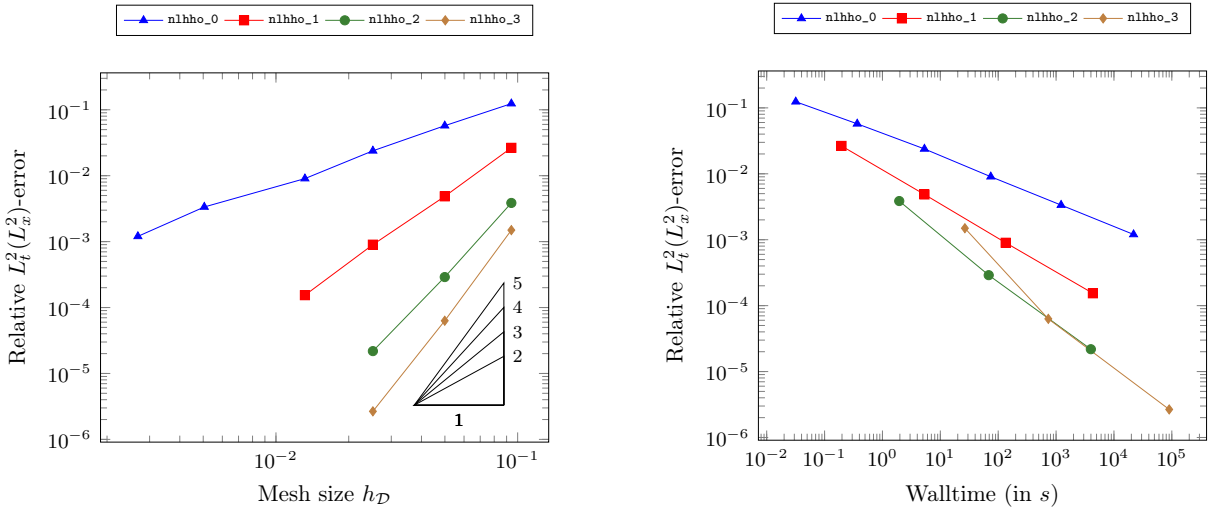}

\subsection{Discrete long-time behaviour}\label{C4:sec:LT}

We are now interested in the long-time behaviour of discrete solutions.

We first use the same test-case as in Section~\ref{C4:sec:CV}, but this time with an anisotropic tensor: we set $l_{x_1} =  10^{-2}$.
The corresponding steady-state is
\[
	u^\infty(x_1,x_2) = 2 C_1 \pi \e^{ x_1 - \frac{1}{2}  }.
\]
We compute the discrete solutions on the time interval $[0,350]$, with $\Delta t = 10^{-1}$, on two Kershaw meshes of sizes $0.02$ and $0.006$.
On Figure~\ref{C4:fig:longtime}, we display the evolution along time of the $L^1$-distance between the \rev{reconstructed} discrete densities and $u^\infty$, computed as 
\begin{equation} \label{eq:L1.dist}
	\|\mathfrak{u}^{\omega,n}_\M - u^\infty\|_{L^1(\Omega)}\qquad\text{and}\qquad\|\mathfrak{u}^n_\M-u^\infty\|_{L^1(\Omega)}
\end{equation}
for, respectively, the exponential fitting scheme, and the nonlinear scheme. We here focus on \texttt{expf\_1}, and on \texttt{nlhho\_k} for $k \in \lbrace 0,1,2 \rbrace$ (as well as on \texttt{nlhho\_1\_0}).
For all schemes, we observe the exponential convergence towards the thermal equilibrium, until machine precision is reached.
Remark that, for the test-case considered here, $\phi \in \poly^1(\Omega)$, therefore $\phi_\M = \phi$ for all $k\geq 0$.
It follows that $\mathfrak{u}_\M^\infty = u^\infty$ (recall that we always have $\mathfrak{u}^{\omega,\infty}_\M=u^\infty$). This is exactly what we observe in the numerical experiments.
As previously, \texttt{nlhho\_1} and \texttt{nlhho\_1\_0} exhibit an extremely similar behaviour.
Also, for $k \geq 1$, we observe that the decay rates are similar to the exact one $\alpha$, and do not seem to depend on the size of the mesh.
For $k=0$, the decay rate differs a bit from $\alpha$ on the coarsest mesh, but these two rates seem to coincide on a sufficiently refined mesh.

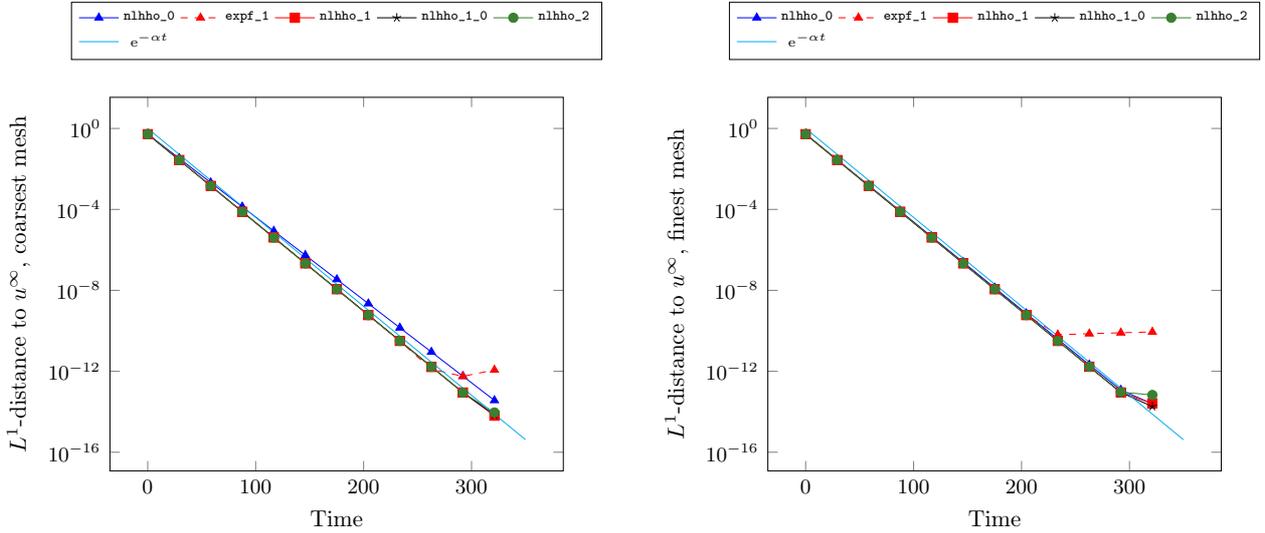
\begin{figure}[h]
\begin{minipage}[c]{.51\linewidth}
\begin{tikzpicture}[scale= 0.87]
        \begin{semilogyaxis}[
            legend style = { 
              at={(0.5,1.1)},
              anchor = south,
              tick label style={font=\footnotesize},
              legend columns=5
            },
            ylabel=\small{$L^1$-distance to $u^\infty$, coarsest mesh},
            xlabel=\small{Time}
            ]
          \addplot[color=blue,mark=triangle*] table[x=Time,y=Diff_L1] {data/tps_long/mesh4_1_1/time_nlhho_0};
          \addplot[color=red,dashed,mark=triangle*,mark options=solid] table[x=Temps,y=Diff_L1] {data/tps_long/mesh4_1_1/time_hho1};
          \addplot[color=red,mark=square*] table[x=Time,y=Diff_L1] {data/tps_long/mesh4_1_1/time_nlhho_1};
          \addplot[color=black,mark=star] table[x=Time,y=Diff_L1] {data/tps_long/mesh4_1_1/time_nlhho_1_0};             
          \addplot[color=OliveGreen,mark=*] table[x=Time,y=Diff_L1] {data/tps_long/mesh4_1_1/time_nlhho_2};       				
          \addplot[cyan] coordinates {
			(0,1)
			(350,4.1484969e-16)
	  };
          \legend{ \tiny \texttt{nlhho\_0},\tiny \texttt{expf\_1},\tiny \texttt{nlhho\_1}, \tiny \texttt{nlhho\_1\_0}, \tiny \texttt{nlhho\_2}, 
           \tiny $ \e^{- \alpha t} $}          
	      \end{semilogyaxis}
      \end{tikzpicture}    
\end{minipage}
\begin{minipage}[c]{.51\linewidth}
\begin{tikzpicture}[scale= 0.87]
        \begin{semilogyaxis}[
            legend style = { 
              at={(0.5,1.1)},
              anchor = south,
              tick label style={font=\footnotesize},
              legend columns=5
            },ylabel=\small{$L^1$-distance to $u^\infty$, finest mesh},xlabel=\small{Time}
          ]
          \addplot[color=blue,mark=triangle*] table[x=Time,y=Diff_L1] {data/tps_long/mesh4_1_4/time_nlhho_0};
          \addplot[color=red,dashed,mark=triangle*,mark options=solid] table[x=Temps,y=Diff_L1] {data/tps_long/mesh4_1_4/time_hho1};
          \addplot[color=red,mark=square*] table[x=Time,y=Diff_L1] {data/tps_long/mesh4_1_4/time_nlhho_1};
          \addplot[color=black,mark=star] table[x=Time,y=Diff_L1] {data/tps_long/mesh4_1_4/time_nlhho_1_0}; 
          \addplot[color=OliveGreen,mark=*] table[x=Time,y=Diff_L1] {data/tps_long/mesh4_1_4/time_nlhho_2};
          \addplot[cyan] coordinates {
			(0,1)
			(350,4.1484969e-16)
			};
          \legend{ \tiny \texttt{nlhho\_0},\tiny \texttt{expf\_1}, \tiny \texttt{nlhho\_1}, \tiny \texttt{nlhho\_1\_0}, \tiny \texttt{nlhho\_2}, 
           \tiny $ \e^{- \alpha t} $ }       
        \end{semilogyaxis}
      \end{tikzpicture}
\end{minipage}
\caption{\textbf{Long-time behaviour of discrete solutions.} $L^1$-distance to $u^\infty$ on Kershaw meshes.}
\label{C4:fig:longtime}
\end{figure}

As a last test-case, we consider an advective potential and an anisotropy tensor set to 
\[
	 \phi(x_1,x_2) =  -\frac{1}{2} \log \left ( 
	1 + (x_1-x_2)^2 + 3 x_2^2 \right )
\quad \text{ and } \quad
\Lambda = 
	\begin{pmatrix}
		10^3 & 0 \\ 
		0 & 1
	\end{pmatrix}.
\]
Our initial datum is 
\[
	u^{0}(x_1,x_2) = 1 + \frac{1}{2} \cos( 2 \pi x_1 ) \sin(2 \pi x_2).
\]
The corresponding thermal equilibrium therefore reads 
\[
 u^\infty(x_1,x_2) = \frac{1}{ \int_{(0,1)^2} \e^{-\phi}}\sqrt{1 + (x_1-x_2)^2 + 3 x_2^2 }.
\]
Remark that the potential $\phi$ is not (piecewise) polynomial.
As previously, we investigate the long-time behaviour of the schemes. We compute the discrete solutions on the time interval $[0,5]$, with $\Delta t = 0.2$, on two distorted quadrangular meshes featuring, respectively, 64 and 1024 cells.
On Figure~\ref{C4:fig:longtime:nonpoly}, we display the evolution of the $L^1$-distance to equilibrium, as defined in~\eqref{eq:L1.dist}, for both the \texttt{expf\_k} and \texttt{nlhho\_k} schemes, for $k\in\{0,1,2,3\}$. For all schemes, we observe the exponential convergence towards the thermal equilibrium, until some precision is reached. For the exponential fitting schemes, machine precision is attained (which is expected since $\mathfrak{u}^{\omega,\infty}_\M=u^\infty$), whereas for the nonlinear schemes (for which $\mathfrak{u}^\infty_\M$ is an approximation of $u^\infty$), the precision increases, as expected, with the polynomial degree and as the mesh is refined. Also, all schemes with $k\geq 1$ seem to exhibit a similar, meshsize-independent decay rate. For $k=0$, the decay rate seems slightly sensitive to the mesh size, but tends to reach the expected value on a sufficiently refined mesh.

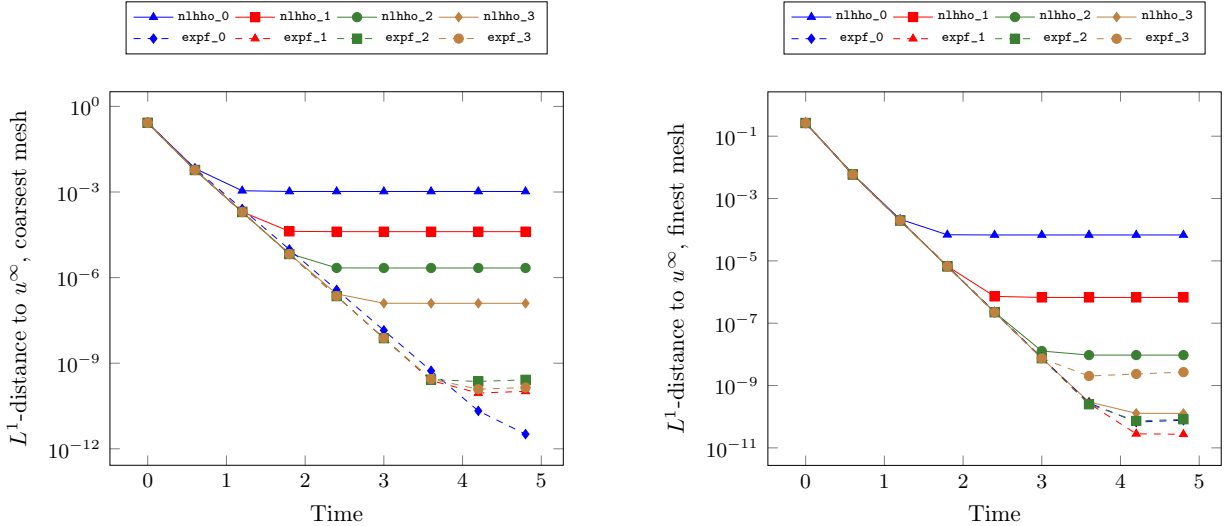
\begin{figure}[h]
\begin{minipage}[c]{.51\linewidth}
\begin{tikzpicture}[scale= 0.87]
        \begin{semilogyaxis}[
            legend style = { 
              at={(0.5,1.1)},
              anchor = south,
              tick label style={font=\footnotesize},
              legend columns=4
            },
            ylabel=\small{$L^1$-distance to $u^\infty$, coarsest mesh},
            xlabel=\small{Time}
            ]           
          \addplot[color=blue,mark=triangle*] table[x=Time,y=Diff_L1] {data/tps_long_nonpoly/mesh_quad_2/time_nlhho_0};                       
          \addplot[color=red,mark=square*] table[x=Time,y=Diff_L1] {data/tps_long_nonpoly/mesh_quad_2/time_nlhho_1};               
          \addplot[color=OliveGreen,mark=*] table[x=Time,y=Diff_L1] {data/tps_long_nonpoly/mesh_quad_2/time_nlhho_2};   
          \addplot[color=brown,mark=diamond*] table[x=Time,y=Diff_L1] {data/tps_long_nonpoly/mesh_quad_2/time_nlhho_3}; 	
          \addplot[color=blue,dashed,mark=diamond*,mark options=solid] table[x=Temps,y=Diff_L1] {data/tps_long_nonpoly/mesh_quad_2/time_hho0};                       
          \addplot[color=red,dashed,mark=triangle*,mark options=solid] table[x=Temps,y=Diff_L1] {data/tps_long_nonpoly/mesh_quad_2/time_hho1};               
          \addplot[color=OliveGreen,dashed,mark=square*,mark options=solid] table[x=Temps,y=Diff_L1] {data/tps_long_nonpoly/mesh_quad_2/time_hho2};   
          \addplot[color=brown,dashed,mark=*,mark options=solid] table[x=Temps,y=Diff_L1] {data/tps_long_nonpoly/mesh_quad_2/time_hho3}; 				
          \legend{ \tiny \texttt{nlhho\_0}, \tiny \texttt{nlhho\_1}, \tiny \texttt{nlhho\_2}, \tiny \texttt{nlhho\_3}, \tiny \texttt{expf\_0}, \tiny \texttt{expf\_1}, \tiny \texttt{expf\_2}, \tiny \texttt{expf\_3}}          
	      \end{semilogyaxis}
      \end{tikzpicture}    
\end{minipage}
\begin{minipage}[c]{.51\linewidth}
\begin{tikzpicture}[scale= 0.87]
        \begin{semilogyaxis}[
            legend style = { 
              at={(0.5,1.1)},
              anchor = south,
              tick label style={font=\footnotesize},
              legend columns=4
            },ylabel=\small{$L^1$-distance to $u^\infty$, finest mesh},xlabel=\small{Time}
          ]          
          \addplot[color=blue,mark=triangle*] table[x=Time,y=Diff_L1] {data/tps_long_nonpoly/mesh_quad_4/time_nlhho_0};
          \addplot[color=red,mark=square*] table[x=Time,y=Diff_L1] {data/tps_long_nonpoly/mesh_quad_4/time_nlhho_1};               
          \addplot[color=OliveGreen,mark=*] table[x=Time,y=Diff_L1] {data/tps_long_nonpoly/mesh_quad_4/time_nlhho_2};   
          \addplot[color=brown,mark=diamond*] table[x=Time,y=Diff_L1] {data/tps_long_nonpoly/mesh_quad_4/time_nlhho_3};           
          \addplot[color=blue,dashed,mark=diamond*,mark options=solid] table[x=Temps,y=Diff_L1] {data/tps_long_nonpoly/mesh_quad_4/time_hho0};                       
          \addplot[color=red,dashed,mark=triangle*,mark options=solid] table[x=Temps,y=Diff_L1] {data/tps_long_nonpoly/mesh_quad_4/time_hho1};               
          \addplot[color=OliveGreen,dashed,mark=square*,mark options=solid] table[x=Temps,y=Diff_L1] {data/tps_long_nonpoly/mesh_quad_4/time_hho2};   
          \addplot[color=brown,dashed,mark=*,mark options=solid] table[x=Temps,y=Diff_L1] {data/tps_long_nonpoly/mesh_quad_4/time_hho3}; 				
          \legend{ \tiny \texttt{nlhho\_0}, \tiny \texttt{nlhho\_1}, \tiny \texttt{nlhho\_2}, \tiny \texttt{nlhho\_3}, \tiny \texttt{expf\_0}, \tiny \texttt{expf\_1}, \tiny \texttt{expf\_2}, \tiny \texttt{expf\_3}}         
        \end{semilogyaxis}
      \end{tikzpicture}
\end{minipage}
\caption{\textbf{Long-time behaviour of discrete solutions.} $L^1$-distance to $u^\infty$ on distorted quadrangular meshes.}
\label{C4:fig:longtime:nonpoly}
\end{figure}

\section{Conclusion}

In this paper, we have studied two arbitrary-order hybrid methods for the approximation of linear, anisotropic\rev{, potential-driven} advection-diffusion equations on general polytopal meshes. 
The first one is a linear scheme, which is based on the exponential fitting strategy, whereas the second is a nonlinear scheme, whose building principles are adapted from the low-order constructions of~\cite{CaGui:17,CCHKr:18,CHHLM:22}.
We proved that both schemes admit solutions, possess a discrete entropy structure, and preserve the mass, the thermal equilibrium, and the long-time asymptotics.
Moreover, the solutions to the nonlinear scheme are positive by construction.
We have validated these theoretical results on a set of numerical test-cases.
We have unraveled the positivity violation of the linear methods, which justifies the use of (more costly) nonlinear methods. 
In the meantime, the use of nonlinear schemes with polynomial unknowns of higher degree results in an important gain of efficiency (accuracy vs.~computational cost).
These results confirm the benefits of using high-order nonlinear schemes in order to get reliable approximations of dissipative problems.
Future research directions include a full analysis of the nonlinear scheme, in particular of its convergence (with respect to the discretisation parameters) and time-asymptotic properties, as well as the development of similar schemes for more complex, nonlinear problems\rev{, like semiconductor models (based on~\cite{Moatt:23SC})}.

\clearpage

\ifJournal

\begin{acknowledgements}
  The authors would like to thank the anonymous reviewers for their remarks and suggestions which helped improving the quality of the presentation.
The authors also thank J.~Droniou for his insightful comments about this work, and for pointing out a simplification of the proof of Lemma~\ref{C4:lem:BbE}. The authors finally thank C.~Chainais-Hillairet and M.~Herda for fruitful discussions on the topic.
This research was funded in part by the Austrian Science Fund (FWF) project~\href{https://doi.org/10.55776/F65}{10.55776/F65}.
The authors also acknowledge support from the LabEx CEMPI (ANR-11-LABX-0007).  
\end{acknowledgements}

\bibliographystyle{ws-m3as}

\else

\section*{Acknowledgements}
\rev{The authors would like to thank the anonymous reviewers for their remarks and suggestions which helped improving the quality of the presentation.}
The authors also thank \rev{J.~Droniou} for his insightful comments about this work, and for pointing out a simplification of the proof of Lemma~\ref{C4:lem:BbE}. The authors finally thank C.~Chainais-Hillairet and M.~Herda for fruitful discussions on the topic.
This research was funded in part by the Austrian Science Fund (FWF) project~\href{https://doi.org/10.55776/F65}{10.55776/F65}.
The authors also acknowledge support from the LabEx CEMPI (ANR-11-LABX-0007).

\bibliographystyle{siam} 

\fi

\bibliography{sphho}

\begin{thebibliography}{10}

\bibitem{ADPRu:17}
{\sc J.~Aghili, D.~A. Di~Pietro, and B.~Ruffini}, {\em An {$hp$}-{Hybrid
  High-Order} method for variable diffusion on general meshes}, Comput. Methods
  Appl. Math., 17 (2017), pp.~359--376.

\bibitem{BGPVe:24}
{\sc G.~R. Barrenechea, E.~H. Georgoulis, T.~Pryer, and A.~Veeser}, {\em A
  nodally bound-preserving finite element method}, IMA J. Numer. Anal.,
  (2024).

\bibitem{BJKno:24}
{\sc G.~R. Barrenechea, V.~John, and P.~Knobloch}, {\em Finite element methods
  respecting the discrete maximum principle for convection-diffusion
  equations}, SIAM Rev.,  (2024).

\bibitem{BdVDM:11}
{\sc L.~Beir\~{a}o~da Veiga, J.~Droniou, and G.~Manzini}, {\em A unified
  approach for handling convection terms in finite volumes and mimetic
  discretization methods for elliptic problems}, IMA J. Numer. Anal., 31
  (2011), pp.~1357--1401.

\bibitem{BlaLa:16}
{\sc X.~Blanc and E.~Labourasse}, {\em A positive scheme for diffusion problems
  on deformed meshes}, ZAMM: Z. Angew. Math. Mech., 96 (2016), pp.~660--680.

\bibitem{BLMVi:14}
{\sc T.~Bodineau, J.~Lebowitz, C.~Mouhot, and C.~Villani}, {\em Lyapunov
  functionals for boundary-driven nonlinear drift-diffusion equations},
  Nonlinearity, 27 (2014), pp.~2111--2132.

\bibitem{BBJPe:20}
{\sc F.~Bonizzoni, M.~Braukhoff, A.~J\"{u}ngel, and I.~Perugia}, {\em A
  structure-preserving discontinuous {G}alerkin scheme for the {F}isher-{KPP}
  equation}, Numer. Math., 146 (2020), pp.~119--157.

\bibitem{BPSto:22}
{\sc M.~Braukhoff, I.~Perugia, and P.~Stocker}, {\em An entropy structure
  preserving space-time formulation for cross-diffusion systems: analysis and
  {G}alerkin discretization}, SIAM J. Numer. Anal., 60 (2022), pp.~364--395.

\bibitem{BLiSi:05}
{\sc F.~Brezzi, K.~Lipnikov, and V.~Simoncini}, {\em A family of mimetic finite
  difference methods on polygonal and polyhedral meshes}, Math. Models Methods
  Appl. Sci., 15 (2005), pp.~1533--1551.

\bibitem{BMaPi:89}
{\sc F.~Brezzi, L.~D. Marini, and P.~Pietra}, {\em Two-dimensional exponential
  fitting and applications to drift-diffusion models}, SIAM J. Numer. Anal., 26
  (1989), pp.~1342--1355.

\bibitem{CCHHK:20}
{\sc C.~Canc\`{e}s, C.~Chainais-Hillairet, M.~Herda, and S.~Krell}, {\em Large
  time behavior of nonlinear finite volume schemes for convection-diffusion
  equations}, SIAM J. Numer. Anal., 58 (2020), pp.~2544--2571.

\bibitem{CCHKr:18}
{\sc C.~Canc\`{e}s, C.~Chainais-Hillairet, and S.~Krell}, {\em Numerical
  analysis of a nonlinear free-energy diminishing discrete duality finite
  volume scheme for convection diffusion equations}, Comput. Methods Appl.
  Math., 18 (2018), pp.~407--432.

\bibitem{CaGui:17}
{\sc C.~Canc\`{e}s and C.~Guichard}, {\em Numerical analysis of a robust
  free-energy diminishing finite volume scheme for parabolic equations with
  gradient structure}, Found. Comput. Math., 17 (2017), pp.~1525--1584.

\bibitem{CHDro:11}
{\sc C.~Chainais-Hillairet and J.~Droniou}, {\em Finite-volume schemes for
  noncoercive elliptic problems with {N}eumann boundary conditions}, IMA J.
  Numer. Anal., 31 (2011), pp.~61--85.

\bibitem{CHHer:20}
{\sc C.~Chainais-Hillairet and M.~Herda}, {\em Large-time behaviour of a family
  of finite volume schemes for boundary-driven convection-diffusion equations},
  IMA J. Numer. Anal., 40 (2020), pp.~2473--2504.

\bibitem{CHHLM:22}
{\sc C.~Chainais-Hillairet, M.~Herda, S.~Lemaire, and J.~Moatti}, {\em
  Long-time behaviour of hybrid finite volume schemes for advection-diffusion
  equations: linear and nonlinear approaches}, Numer. Math., 151 (2022),
  pp.~963--1016.

\bibitem{Ciarl:70}
{\sc P.~G. Ciarlet}, {\em Discrete maximum principle for finite-difference
  operators}, Aequationes Math., 4 (1970), pp.~338--352.

\bibitem{CiaRa:73}
{\sc P.~G. Ciarlet and P.-A. Raviart}, {\em Maximum principle and uniform
  convergence for the finite element method}, Comput. Methods Appl. Mech.
  Engrg., 2 (1973), pp.~17--31.

\bibitem{CErPi:21}
{\sc M.~Cicuttin, A.~Ern, and N.~Pignet}, {\em Hybrid high-order methods -- {A}
  primer with applications to solid mechanics}, SpringerBriefs in Mathematics,
  Springer, Cham, 2021.

\bibitem{CoDPE:16}
{\sc B.~Cockburn, D.~A. Di~Pietro, and A.~Ern}, {\em Bridging the {Hybrid
  High-Order} and {Hybridizable Discontinuous Galerkin} methods}, ESAIM Math.
  Model. Numer. Anal., 50 (2016), pp.~635--650.

\bibitem{Colin:16}
{\sc P.-L. Colin}, {\em Numerical analysis of drift-diffusion models:
  convergence and asymptotic behaviors}, PhD thesis, Universit\'e de Lille 1,
  2016.

\bibitem{CoBoA:23}
{\sc M.~Corti, F.~Bonizzoni, and P.~F. Antonietti}, {\em Structure preserving
  polytopal {Discontinuous Galerkin} methods for the numerical modeling of
  neurodegenerative diseases}.
\newblock Preprint~\href{https://arxiv.org/abs/2308.00547}{arXiv:2308.00547},
  2023.

\bibitem{DPDro:20}
{\sc D.~A. Di~Pietro and J.~Droniou}, {\em The {Hybrid High-Order} method for
  polytopal meshes -- Design, analysis, and applications}, vol.~19 of MS\&A:
  Modeling, Simulation and Applications, Springer, Cham, 2020.

\bibitem{DPDEr:15}
{\sc D.~A. Di~Pietro, J.~Droniou, and A.~Ern}, {\em A discontinuous-skeletal
  method for advection-diffusion-reaction on general meshes}, SIAM J. Numer.
  Anal., 53 (2015), pp.~2135--2157.

\bibitem{DPErn:15}
{\sc D.~A. Di~Pietro and A.~Ern}, {\em A hybrid high-order locking-free method
  for linear elasticity on general meshes}, Comput. Methods Appl. Mech. Engrg.,
  283 (2015), pp.~1--21.

\bibitem{DPELe:14}
{\sc D.~A. Di~Pietro, A.~Ern, and S.~Lemaire}, {\em An arbitrary-order and
  compact-stencil discretization of diffusion on general meshes based on local
  reconstruction operators}, Comput. Methods Appl. Math., 14 (2014),
  pp.~461--472.

\bibitem{DomOm:05}
{\sc K.~Domelevo and P.~Omnes}, {\em A finite volume method for the {L}aplace
  equation on almost arbitrary two-dimensional grids}, M2AN Math. Model. Numer.
  Anal., 39 (2005), pp.~1203--1249.

\bibitem{Droni:14}
{\sc J.~Droniou}, {\em Finite volume schemes for diffusion equations:
  introduction to and review of modern methods}, Math. Models Methods Appl.
  Sci., 24 (2014), pp.~1575--1619.

\bibitem{DroLP:11}
{\sc J.~Droniou and C.~Le~Potier}, {\em Construction and convergence study of
  schemes preserving the elliptic local maximum principle}, SIAM J. Numer.
  Anal., 49 (2011), pp.~459--490.

\bibitem{DroYe:22}
{\sc J.~Droniou and L.~Yemm}, {\em Robust {H}ybrid {H}igh-{O}rder method on
  polytopal meshes with small faces}, Comput. Methods Appl. Math., 22 (2022),
  pp.~47--71.

\bibitem{Dunav:85}
{\sc D.~A. Dunavant}, {\em High degree efficient symmetrical {G}aussian
  quadrature rules for the triangle}, Internat. J. Numer. Methods Engrg., 21
  (1985), pp.~1129--1148.

\bibitem{Evans:10}
{\sc L.~C. Evans}, {\em Partial differential equations}, vol.~19 of Graduate
  Studies in Mathematics, American Mathematical Society, Providence, RI,
  second~ed., 2010.

\bibitem{EGaHe:10}
{\sc R.~Eymard, T.~Gallou\"{e}t, and R.~Herbin}, {\em Discretization of
  heterogeneous and anisotropic diffusion problems on general nonconforming
  meshes. {SUSHI}: a scheme using stabilization and hybrid interfaces}, IMA J.
  Numer. Anal., 30 (2010), pp.~1009--1043.

\bibitem{EGHer:12}
{\sc R.~Eymard, C.~Guichard, and R.~Herbin}, {\em Small-stencil 3{D} schemes
  for diffusive flows in porous media}, ESAIM Math. Model. Numer. Anal., 46
  (2012), pp.~265--290.

\bibitem{FaKaK:12}
{\sc I.~Farag\'{o}, J.~Kar\'{a}tson, and S.~Korotov}, {\em Discrete maximum
  principles for nonlinear parabolic {PDE} systems}, IMA J. Numer. Anal., 32
  (2012), pp.~1541--1573.

\bibitem{GaGar:96}
{\sc H.~Gajewski and K.~G\"{a}rtner}, {\em On the discretization of van
  {R}oosbroeck's equations with magnetic field}, Z. Angew. Math. Mech., 76
  (1996), pp.~247--264.

\bibitem{GaGro:89}
{\sc H.~Gajewski and K.~Gr\"{o}ger}, {\em Semiconductor equations for variable
  mobilities based on {B}oltzmann statistics or {F}ermi--{D}irac statistics},
  Math. Nachr., 140 (1989), pp.~7--36.

\bibitem{Herme:00}
{\sc F.~Hermeline}, {\em A finite volume method for the approximation of
  diffusion operators on distorted meshes}, J. Comput. Phys., 160 (2000),
  pp.~481--499.

\bibitem{Lehre:10}
{\sc C.~Lehrenfeld}, {\em Hybrid {Discontinuous Galerkin} methods for solving
  incompressible flow problems}, Master's thesis, Rheinisch-Westf\"alische
  Technische Hochschule (RWTH) Aachen, 2010.

\bibitem{LehSc:16}
{\sc C.~Lehrenfeld and J.~Sch\"{o}berl}, {\em High order exactly
  divergence-free hybrid discontinuous {G}alerkin methods for unsteady
  incompressible flows}, Comput. Methods Appl. Mech. Engrg., 307 (2016),
  pp.~339--361.

\bibitem{LiuWa:16}
{\sc H.~Liu and Z.~Wang}, {\em An entropy satisfying discontinuous {G}alerkin
  method for nonlinear {F}okker--{P}lanck equations}, J. Sci. Comput., 68
  (2016), pp.~1217--1240.

\bibitem{LiuYu:14}
{\sc H.~Liu and H.~Yu}, {\em Maximum-principle-satisfying third order
  discontinuous {G}alerkin schemes for {F}okker--{P}lanck equations}, SIAM J.
  Sci. Comput., 36 (2014), pp.~A2296--A2325.

\bibitem{MarUn:93}
{\sc P.~A. Markowich and A.~Unterreiter}, {\em Vacuum solutions of a stationary
  drift-diffusion model}, Ann. Scuola Norm. Sup. Pisa Cl. Sci. (4), 20 (1993),
  pp.~371--386.

\bibitem{Moatt:23HO}
{\sc J.~Moatti}, {\em A skeletal high-order structure preserving scheme for
  advection-diffusion equations}, in Finite Volumes for Complex Applications X
  -- Volume 1: Elliptic and Parabolic Problems, E.~Franck, J.~Fuhrmann,
  V.~Michel-Dansac, and L.~Navoret, eds., vol.~432 of Springer Proceedings in
  Mathematics \& Statistics, Springer, Cham, 2023, pp.~345--354.

\bibitem{Moatt:23SC}
\leavevmode\vrule height 2pt depth -1.6pt width 23pt, {\em A structure
  preserving hybrid finite volume scheme for semiconductor models with magnetic
  field on general meshes}, ESAIM Math. Model. Numer. Anal., 57 (2023),
  pp.~2557--2593.

\bibitem{Quenj:22}
{\sc E.~H. Quenjel}, {\em Positive {S}charfetter--{G}ummel finite volume method
  for convection-diffusion equations on polygonal meshes}, Appl. Math. Comput.,
  425 (2022), pp.~Paper No.~127071, 20.

\bibitem{SchGu:69}
{\sc D.~L. Scharfetter and H.~K. Gummel}, {\em Large-signal analysis of a
  silicon {R}ead diode oscillator}, IEEE Transactions on Electron Devices, 16
  (1969), pp.~64--77.

\bibitem{SAEFl:17}
{\sc M.~Schneider, L.~Ag\'{e}las, G.~Ench\'{e}ry, and B.~Flemisch}, {\em
  Convergence of nonlinear finite volume schemes for heterogeneous anisotropic
  diffusion on general meshes}, J. Comput. Phys., 351 (2017), pp.~80--107.

\bibitem{SYYua:09}
{\sc Z.~Sheng, J.~Yue, and G.~Yuan}, {\em Monotone finite volume schemes of
  nonequilibrium radiation diffusion equations on distorted meshes}, SIAM J.
  Sci. Comput., 31 (2009), pp.~2915--2934.

\bibitem{VanRo:50}
{\sc W.~Van~Roosbroeck}, {\em Theory of the flow of electrons and holes in
  germanium and other semiconductors}, 29 (1950), pp.~560--607.

\end{thebibliography}

\end{document}